\newcommand{\hamlevel}{\kappa}
\newcommand{\hamwt}[2]{\#_{#1}#2}
\newcommand{\colors}{{\ell}}
\newcommand{\slice}[1]{\calU_{#1}}
\newcommand{\symm}[1]{S_{#1}}
\newcommand{\transpos}[1]{\mathrm{Trans}(#1)}
\newcommand{\Energy}{\calE}
\newcommand{\Ker}{\mathrm{K}}
\newcommand{\Lap}{\mathrm{L}}
\newcommand{\Heat}{\mathrm{H}}
\newcommand{\Ent}{\mathbf{Ent}}
\newcommand{\invLS}[1]{R_{#1}}
\newcommand{\lwr}{\mathbf{lower}}
\newcommand{\upr}{\mathbf{upper}}
\DeclarePairedDelimiterX\diverg[2]{(}{)}{#1 \mathrel{}\mathclose{}\delimsize\|\mathopen{}\mathrel{} #2}
\newcommand{\KL}[2]{\mathrm{KL}\diverg{#1}{#2}}
\newcommand{\ep}{\epsilon}
\newcommand{\lb}{\lambda}
\newcommand{\cost}{c}
\newcommand{\corr}{\mathrm{corr}}
\renewcommand{\bdry}{\partial}
\begin{document}

\title{A log-Sobolev inequality for the multislice, with applications}
\author{Yuval Filmus\thanks{Technion Computer Science Department. \texttt{yuvalfi@cs.technion.ac.il}. Taub Fellow --- supported by the Taub Foundations. The research was funded by ISF grant 1337/16.} \and Ryan O'Donnell\thanks{Computer Science Department, Carnegie Mellon University.  \texttt{odonnell@cs.cmu.edu}. Supported by NSF grant CCF-1717606. This material is based upon work supported by the National Science Foundation under grant numbers listed above. Any opinions, findings and conclusions or recommendations expressed in this material are those of the author and do not necessarily reflect the views of the National Science Foundation (NSF).} \and Xinyu Wu\thanks{Computer Science Department, Carnegie Mellon University.  \texttt{xinyuw1@andrew.cmu.edu}}}

\maketitle

\begin{abstract}
    Let $\hamlevel \in \N_+^\colors$ satisfy $\hamlevel_1 + \cdots + \hamlevel_\colors = n$, and let $\slice{\hamlevel}$ denote the \emph{multislice} of all strings $u \in [\colors]^n$ having exactly $\hamlevel_i$ coordinates equal to~$i$, for all $i \in [\colors]$.  Consider the Markov chain on $\slice{\hamlevel}$ where a step is a random transposition of two coordinates of~$u$.  We show that the log-Sobolev constant~$\varrho_\hamlevel$ for the chain satisfies
    \[
        \varrho_\hamlevel^{-1} \leq
        n \cdot \sum_{i=1}^\colors \tfrac{1}{2} \log_2(4n/\hamlevel_i),
    \]
     which is sharp up to constants whenever $\colors$ is constant.  From this, we derive some consequences for small-set expansion and isoperimetry in the multislice, including a KKL Theorem, a Kruskal--Katona Theorem for the multislice, a Friedgut Junta Theorem, and a Nisan--Szegedy Theorem.
\end{abstract}

\section{Introduction}

Suppose we have a deck of $n$ cards, with $\hamlevel_1$ of them colored red, $\hamlevel_2$ of them colored blue, and $\hamlevel_3$~of them colored green.  If we ``shuffle'' the cards by repeatedly transposing random pairs of cards, how long does it take for the deck to get to a well-mixed configuration?  This question is asking about the mixing time and expansion in a Markov chain known variously as the \emph{multi-urn Bernoulli--Laplace diffusion process} or the \emph{multislice}.

Let $\colors \in \N_+$ denote a number of \emph{colors} and let $n \in \N_+$ denote a number of \emph{coordinates} (or \emph{positions}). Following computer science terminology, we refer to elements $u \in [\colors]^n$ as \emph{strings}.  Given a color~$i \in [\colors]$, we write $\hamwt{i}{u}$ for the number of coordinates~$j \in [n]$ for which $u_j = i$. The vector $\hamlevel = (\hamwt{1}{u}, \dots, \hamwt{\colors}{u}) \in \N^\colors$ is referred to as the \emph{histogram} of~$u$. In general, if  $\hamlevel \in \N_+^\colors$ satisfies $\hamlevel_1 + \cdots + \hamlevel_\colors = n$ (so $\hamlevel$ is a \emph{composition} of~$n$), we define the associated \emph{multislice} to be
\[
    \slice{\hamlevel} = \braces*{u \in [\colors]^n : \hamwt{i}{u} = \hamlevel_i \text{ for all } i \in [\colors]}.
\]
The terminology here is inspired by the well-studied case when $\colors = 2$, in which case $\slice{\hamlevel}$ is a Hamming \emph{slice} of the Boolean cube.  We also remark that when $\colors = n$ and $\hamlevel = (1, 1, \dots, 1)$, the set $\slice{\hamlevel}$ is the set of all permutations of~$[n]$.

\paragraph{The random transposition Markov chain.} The symmetric group $\symm{n}$ acts on strings $u \in[\colors]^n$ in the natural way, by permuting coordinates: $(u^\sigma)_j = u_{\sigma(j)}$ for $\sigma \in \symm{n}$.  This action preserves each multislice $\slice{\hamlevel}$.  This paper is concerned with the Markov chain on $\slice{\hamlevel}$ generated by \emph{random transpositions}.  Let $\transpos{n} \subseteq \symm{n}$ denote the set of transpositions on~$n$ coordinates.  We will specifically be interested in the reversible, discrete-time Markov chain on state space~$\slice{\hamlevel}$ in which a step from $u \in \slice{\hamlevel}$ consists of moving to $u^\btau$, where $\btau \sim \transpos{n}$ is chosen uniformly at random.  (We always use \textbf{boldface} to denote random variables.)  One also has the associated \emph{Schreier graph}, with vertex set~$\slice{\hamlevel}$ and edges $\{u, u^\tau\}$ for all $u \in \slice{\hamlevel}$ and $\tau \in \transpos{n}$.  Since this graph is regular, it follows that the invariant distribution for the Markov chain is the uniform distribution on~$\slice{\hamlevel}$.  We will denote this distribution by $\pi_\hamlevel$, or just $\pi$ if $\hamlevel$ is clear from context.

\paragraph{Log-Sobolev inequalities.}  One of the most powerful ways to study mixing time and ``small-set expansion'' in Markov chains is through \emph{log-Sobolev} inequalities (see, e.g.,~\cite{Gro75,DS96}).  For a subset $A \subseteq \slice{\hamlevel}$, define its \emph{conductance} (or \emph{expansion}) to be
\[
    \Phi[A] = \Pr_{\substack{\bu \sim A \\ \btau \sim \transpos{n}}}[\bu^\btau \not \in A].
\]
Sets $A$ with small conductance are natural bottlenecks for mixing in the Markov chain.  An example when $\colors = 2$ and $\hamlevel = (n/2, n/2)$ is the ``dictator'' set $A = \{u : u_1 = 1\}$. It has expansion $\Phi[A] = \frac{1}{n-1}$, and indeed, if we start the random walk from a string~$u$ with $u_1 = 1$, it will take about $n/2$ steps on average before there's even a chance that $u_1$ will change from~$1$.

One feature of this example is that the set~$A$ is ``large''; its \emph{(fractional) volume},
\[
    \vol(A) = \abs{A}/\abs{\slice{\hamlevel}} = \Pr_{\bu \sim \pi}[\bu \in A],
\]
is bounded below by a constant.  The ``small-set expansion'' phenomenon~\cite{KKL88,LK99,RS10} (occurring most famously in the standard random walk on the Boolean cube~$\{0,1\}^n$) refers to the possibility that all ``small'' sets have high conductance.  Intuitively, if small-set expansion holds for a Markov chain, then a random walk with a deterministic starting point should mix rapidly in its early stages, with the possibility for slowdown occurring only when the chain is somewhat close to mixed.

A \emph{log-Sobolev inequality} for the Markov chain is one way that such a phenomenon may be captured.  In particular, if the \emph{log-Sobolev constant} for the transposition chain on $\slice{\hamlevel}$ is $\varrho_\kappa$, it follows that
\begin{equation}    \label[ineq]{eqn:sse1}
    \Phi[A] \geq \half \varrho_\hamlevel \cdot \ln(1/\vol(A))
    \quad \text{for all nonempty subsets } A \subseteq \slice{\hamlevel}.
\end{equation}
So sets of constant volume must have conductance $\Omega(\varrho_\hamlevel)$, but sets of volume $2^{-\Theta(n)}$ (for example) must have conductance $\Omega(n \varrho_\hamlevel)$.  A known further consequence of a log-Sobolev inequality is a \emph{hypercontractive inequality}, which concerns expansion in the continuous-time version of the Markov chain.  It implies that if $\bsigma$ is the random permutation generated by performing the continuous-time chain for $t = \frac{\ln c}{2\varrho_\hamlevel}$ time --- i.e.,
\[
    \bsigma \text{ is the product of Poisson}\parens*{\frac{\ln c}{2\varrho_\hamlevel}}\text{ random transpositions, } c \geq 1
\]
--- then
\[
    \Pr_{\substack{\bu \sim A \\ \bsigma \sim \transpos{n}}}\bracks*{\bu^{\bsigma} \not \in A} \geq 1 - \vol(A)^{(c-1)/(c+1)} \quad \text{for all nonempty subsets } A \subseteq \slice{\hamlevel}.
\]
Thus again, if $\vol(A)$ is small, then the Markov chain will almost surely exit~$A$ after running for $\Theta(\varrho_\hamlevel^{-1})$ steps.

We remark that \Cref{eqn:sse1} is merely a \emph{consequence} of the log-Sobolev constant being~$\varrho_\hamlevel$.  It is not the case that $\varrho_\hamlevel$ is defined to be the largest constant for which \Cref{eqn:sse1} holds (for all~$A$) --- though this is a reasonable intuition.  Instead, $\varrho_\hamlevel$ is defined to be the largest constant for which a certain generalization of \Cref{eqn:sse1} to nonnegative functions holds; namely,
\begin{equation} \label[ineq]{eqn:logsob1}
    \E_{\substack{\bu \sim \pi \\ \btau \sim \transpos{n}}}\parens*{\sqrt{\phi(\bu)} - \sqrt{\phi(\bu^\btau)}}^2 \geq  \varrho_\hamlevel \cdot \KL{\phi \pi}{\pi}
     \quad \text{for all probability densities } \phi.
\end{equation}
(Here a \emph{probability density function} is a function $\phi\colon \slice{\hamlevel} \to \R^{\geq 0}$ satisfying $\E_\pi[\phi] = 1$, and $\KL{\phi \pi}{\pi}$ denotes the \emph{KL divergence} between distributions~$\phi \pi$ and~$\pi$.)  \Cref{eqn:logsob1} includes \Cref{eqn:sse1} by taking $\phi = 1_A/\vol(A)$. For more details, see \Cref{sec:prelims}.

Our main theorem in this work is a lower bound on the log-Sobolev constant for $\slice{\hamlevel}$:
\begin{theorem}                                     \label{thm:main}
    Let $\hamlevel \in \N_+^\colors$ satisfy $\hamlevel_1 +\cdots + \hamlevel_\colors = n$, and let $\varrho_\hamlevel$ denote the log-Sobolev constant for the transposition chain on the multislice $\slice{\hamlevel}$ (i.e., the largest constant for which \Cref{eqn:logsob1} holds).  Then
    \[
        \varrho_\hamlevel^{-1} \leq
        n \cdot \sum_{i=1}^\colors \tfrac12 \log_2(4n/\hamlevel_i).
    \]
\end{theorem}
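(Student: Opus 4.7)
The plan is to prove the bound by induction on the number of colors $\colors$. The base case $\colors = 2$ is a log-Sobolev inequality for the Hamming slice $\slice{(k, n-k)}$, essentially due to Lee--Yau; I would verify or re-derive the specific quantitative form $\varrho_{(k,n-k)}^{-1} \leq n \cdot \tfrac{1}{2}\bigl(\log_2(4n/k) + \log_2(4n/(n-k))\bigr)$ required by \Cref{thm:main}, possibly via its own separate induction on $n$ (peeling off one coordinate and using entropy chain rule).

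For the inductive step with $\colors \geq 3$, I would use a hierarchical entropy decomposition based on peeling off a single color. Choose a color --- say color~$\colors$ --- and consider the projection $u \mapsto u'$ that records only which positions of~$u$ contain color~$\colors$; its image lies in the Hamming slice $\slice{(\hamlevel_\colors, n-\hamlevel_\colors)}$. Conditioned on~$u'$, the distribution of~$u$ is uniform on the multislice $\slice{(\hamlevel_1, \ldots, \hamlevel_{\colors-1})}$ defined on the $n - \hamlevel_\colors$ non-color-$\colors$ positions. Writing $\bar\phi(u') = \E[\phi(\bu) \mid \bu \mapsto u']$, the KL divergence splits as
\begin{equation*}
    \KL{\phi \pi_\hamlevel}{\pi_\hamlevel} \;=\; \KL{\bar\phi \pi_{\mathrm{out}}}{\pi_{\mathrm{out}}} \;+\; \E_{u'}\bracks*{\KL{\phi_{u'} \pi_{u'}}{\pi_{u'}}}.
\end{equation*}
I would apply the $\colors = 2$ bound to the outer KL and the inductive hypothesis to each inner KL.

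To convert this into a bound on $\varrho_\hamlevel^{-1}$ via \Cref{eqn:logsob1}, I would relate each sub-chain's Dirichlet form back to the full one. For the outer Hamming slice, coupling the conditional samples of $u$ and $u^\tau$ through the \emph{same} transposition~$\tau$ and using convexity of $(\sqrt{x}-\sqrt{y})^2$ yields $\Energy_{\mathrm{out}}(\bar\phi) \leq \Energy_{\mathrm{full}}(\phi)$ with no loss. For the inner chain, only transpositions of pairs of non-color-$\colors$ positions contribute, producing a multiplicative pull-back factor $\binom{n}{2}/\binom{n-\hamlevel_\colors}{2} \approx \bigl(n/(n-\hamlevel_\colors)\bigr)^2$.

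The main obstacle will be organizing the recursion so that these pull-back factors telescope correctly into the target $n \cdot \sum_i \tfrac{1}{2} \log_2(4n/\hamlevel_i)$. A naive peel of color~$\colors$ is problematic when $\hamlevel_\colors$ is large: the $(n/(n-\hamlevel_\colors))^2$ blow-up would overwhelm the $n \cdot \tfrac12 \log_2(4n/\hamlevel_i)$ savings from the smaller universe. I expect the fix is to always peel the \emph{smallest} remaining color --- so that the inner universe size $n_r = \hamlevel_1 + \cdots + \hamlevel_r$ stays within a bounded ratio of the cumulative sum at each step --- and to verify that the resulting recursion telescopes to exactly the stated bound. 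The slack factor of~$4$ inside the logarithm looks calibrated precisely to absorb the overhead from each Dirichlet-form pull-back; checking this bookkeeping carefully is where essentially all the work of the proof will sit.
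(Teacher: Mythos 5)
Your approach is genuinely different from the paper's: the paper applies the KL chain rule with respect to a single \emph{coordinate} (and then symmetrizes over all $n$ coordinates), inducting on $n$ via the recursion in \Cref{lem:induction-structure}, whereas you apply the chain rule with respect to a single \emph{color}, projecting to a two-color Hamming slice and inducting on $\colors$. Both are plausible hierarchical entropy arguments, and your intuition about which color to peel (the smallest) is reasonable. However, there is a quantitative gap in the proposal that prevents it from proving \Cref{thm:main} as stated.

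The problem is in how the two Dirichlet-form bounds are combined. You bound the outer Dirichlet form by the full one (correct, via convexity of $(\sqrt x - \sqrt y)^2$), and the expected inner Dirichlet form by the pull-back $\tbinom{n}{2}/\tbinom{m}{2} \cdot \Energy_{\mathrm{full}}$ where $m = n - \hamlevel_\colors$, and then you \emph{add} the two resulting KL bounds. This yields a recursion of the shape $\invLS{\hamlevel} \leq \invLS{\mathrm{Ham}(\hamlevel_\colors,\,m)} + \tbinom{n}{2}/\tbinom{m}{2}\cdot \invLS{\hamlevel\setminus\{\text{color }\colors\}}$. But this adds two bounds each of which already ``uses up'' the entire Dirichlet form, so in effect it double-counts it. A concrete check: take $\colors = 3$ and $\hamlevel = (n/3,n/3,n/3)$. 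The theorem requires $\varrho_\hamlevel^{-1} \leq \tfrac{3n}{2}\log_2 12 \approx 5.38n$. Your recursion gives (using the theorem's own $\colors=2$ form for both the base case and inner term) approximately $3.09n$ for the outer Hamming slice, plus the pull-back factor $\tfrac94$ times the inner term of $2n$, for a total of about $7.58n$ --- a constant factor too large. So the recursion does not telescope to the claimed bound even in the nicest case, and the slack factor of $4$ in the logarithm is not what rescues it.

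The natural fix is to exploit the fact that the outer Dirichlet form only involves transpositions swapping a color-$\colors$ position with a non-$\colors$ one, while the inner Dirichlet form only involves transpositions between two non-$\colors$ positions --- these are disjoint contributions to $\Energy_{\mathrm{full}}$. Keeping them separate turns the recursion into a \emph{max} rather than a sum: $\invLS{\hamlevel} \leq \max\bigl(\invLS{\mathrm{Ham}(\hamlevel_\colors,m)},\ \tbinom{n}{2}/\tbinom{m}{2}\cdot \invLS{\mathrm{inner}}\bigr)$. This max version does pass the balanced $\colors=3$ check ($\max(3.09n, 4.5n) = 4.5n < 5.38n$), but it still does not cleanly close the induction for small histograms when you use the theorem's own form as the inductive hypothesis --- e.g., for $\hamlevel = (1,1,1)$ the inner term $\tbinom{3}{2}/\tbinom{2}{2}\cdot\invLS{(1,1)}$, using the theorem's $\colors=2$ bound of $6$ for the inner slice, gives $18$, exceeding the target $16.13$. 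You would need a tighter base case and/or a more delicate telescoping argument. In contrast, the paper's coordinate-peeling recursion (\Cref{lem:induction-structure}) avoids the $\tbinom{n}{2}/\tbinom{m}{2}$ pull-back entirely because the inner problem lives on the same number of transpositions minus one coordinate, and only an $\tfrac{n-1}{n}$ factor appears, which is harmless. So although your decomposition is a legitimate alternative route, the bookkeeping you flag as the crux does in fact break the sum version, and even the refined max version requires substantially more care than the paper's approach.
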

The main case of interest for us is $n \longrightarrow \infty$ with $\colors = O(1)$ and $\hamlevel_i/n \geq \Omega(1)$ for each~$i$; in other words, when we are at a ``middling'' histogram of a high-dimensional multicube~$[\colors]^n$.  In this case our bound is $\varrho_\hamlevel \geq \Omega(1/n)$, which is the same bound that holds for the standard random walk on the Boolean cube. Thus for this parameter setting, the random transposition chain on the multislice enjoys all of the same small-set expansion properties as the Boolean cube (up to constants).

\paragraph{On the sharpness of \Cref{thm:main}.}  When $\colors$ is considered to be a constant, \Cref{thm:main} is sharp up to constant factors (which we did not attempt to optimize); i.e.,
\begin{equation}    \label{eqn:likely-truth}
    \varrho_{\hamlevel}^{-1} = \Theta(n) \cdot \log\parens*{\frac{n}{\min_i \{\hamlevel_i\}}} \quad \text{for } \colors = O(1).
\end{equation}
To see the upper bound on $\varrho_\hamlevel$, assume without loss of generality that $\colors = \argmin_i \{\hamlevel_i\}$, and take
\[
    A = \braces*{u \in \slice{\hamlevel} : u_j = \colors \text{ for all } j \in [\hamlevel_\colors]}.
\]
It is easy to compute that $\Phi[A] = \Theta(\hamlevel_\colors/n)$ and $\vol(A) = \binom{n}{\hamlevel_\colors}^{-1}$ (hence $\ln(1/\vol(A)) = \Theta(\hamlevel_\colors \log(n/\hamlevel_\colors)$)). Putting this into \Cref{eqn:sse1} shows the claimed upper bound on~$\varrho_\hamlevel$.

At the opposite extreme, when $\colors = n$ and $\hamlevel = (1, 1, \dots, 1)$, we have the random transposition walk on the symmetric group~$\symm{n}$.  In this case, \Cref{thm:main} as stated gives the poor bound of $\varrho_\hamlevel \geq \Omega(1/n^2 \log n)$, whereas the optimal bound is $\varrho_\hamlevel = \Theta(1/n \log n)$~\cite{DS96,LY98}.  In fact, our proof of \Cref{thm:main} (which generalizes that of~\cite{LY98}) can actually achieve the tight lower bound of $\varrho_\hamlevel \geq \Omega(1/n\log n)$ in this case.  However, we tailored our general bound for the case of~$\colors = O(1)$, and did not try to optimize for the most general scenario of~$\colors$ varying with~$n$.  A reasonable prediction might be that \Cref{eqn:likely-truth} always holds, up to universal constants, without the assumption of $\colors = O(1)$; we leave investigation of this for future work.

\subsection{Applications}
There are many known applications of log-Sobolev and hypercontractive inequalities in combinatorics and theoretical computer science (see, e.g.,~\cite[Ch.~9,~10]{OD14}).  In this paper we present four particular consequences of \Cref{thm:main} for analysis/combinatorics of Boolean functions on the multislice. We anticipate the possibility of several more.  Full details of these applications appear in \Cref{sec:applications}; here we describe them informally.

Throughout the remainder of this section, let us think of $n$ as large, of $\colors$ as constant, and let us fix a histogram~$\hamlevel$ (with $\hamlevel_1 + \cdots + \hamlevel_\colors = n$) satisfying $\hamlevel_i/n \geq \Omega(1)$ for all~$i$. For example, we might think of $\colors = 3$ and $\hamlevel = (n/3,n/3,n/3)$, so that $\slice{\hamlevel}$ consists of all ternary strings with an equal number of $1$'s, $2$'s, and $3$'s.  The \emph{isoperimetric problem} for $\slice{\hamlevel}$ would ask: for a given fixed~$0 < \alpha < 1$, which subset $A \subseteq \slice{\hamlevel}$ with $\vol(A) = \alpha$ has minimal ``edge boundary'', i.e., minimal~$\Phi[A]$?  (Here ``edge boundary'' is with respect to performing a single transposition, although in our Kruskal--Katona application we will relate this to the size of $A$'s ``shadows'' at neighboring multislices.)

We typically think of~$\alpha$ as ``constant'', bounded away from~$0$ and~$1$.  In our example with $\hamlevel = (n/3,n/3,n/3)$, when $\alpha = 1/3$ the isoperimetric minimizer is a ``dictator'' set like $A = \{u : u_1 = 1\}$; it has $\Phi[A] = \frac{4/3}{n-1}$.  The ``$99$\% regime'' version of the isoperimetric question would be: if $\Phi[A]$ is within a factor $1+o(1)$ of minimal, must $A$ be ``$o(1)$-close'' to a minimizer? This question will be considered in a companion paper. We will instead consider the ``$1$\% regime'' version of the isoperimetric question: if $\Phi[A]$ is at most $O(1)$ times the minimum, must~$A$ at least ``slightly resemble'' a minimizer?

To orient ourselves, first note that for constant~$\alpha$ (bounded away from $0$ and~$1$), the minimum possible value of $\Phi[A]$ among $A$ with $\vol(A) = \alpha$ is $\Theta(1/n)$; indeed, this follows from our \Cref{thm:main} and \Cref{eqn:sse1}.  From this fact, we will derive a multislice variant of the \textbf{Kruskal--Katona Theorem}.  Up to $O(1)$ factors, this minimum is achieved not just by ``dictator'' sets like $\{u \in \slice{(n/3,n/3,n/3)} : u_1 = 1\}$, but also by any ``junta'' set, meaning a set~$A$ for which absence or presence of $u \in A$ depends only on the colors $(u_j : j \in J)$ for a set $J \subseteq [n]$ of cardinality $c = O(1)$. It is not hard to see that if $A \subseteq \slice{\hamlevel}$ is such a \emph{$c$-junta}, then $\Phi[A] \leq O(c/n)$.  We may now ask: if $\Phi[A] \leq O(1/n)$, must~$A$ at least slightly ``resemble'' a junta?

We give two closely related positive answers to this question, as a consequence of our log-Sobolev inequality.  The first answer, a \textbf{KKL Theorem} for the multislice (cf.~\cite{KKL88}), follows  immediately from previous work~\cite{OW13a,OW13}.  It says that for any set with $\Phi[A] \leq O(1/n)$, there must exist some pair of coordinates $j,j' \in [n]$ with at least constant \emph{influence} on~$A$, where the influence of the transposition $(j\;j')$ on~$A$ is defined to be
\begin{equation}    \label{eqn:inf-set}
    \Inf_{(j\;j')}[A] = \Pr_{\bu \sim \pi}\bracks*{1_A\bigl(\bu\bigr) \neq 1_A\bigl(\bu^{(j\;j')}\bigr)}.
\end{equation}
It is the hallmark of a junta $A$ that every transposition $(j\;j')$ has either $\Inf_{(j\;j')}[A] = 0$ or $\Inf_{(j\;j')}[A] \geq \Omega(1)$.  In fact, mirroring the original KKL Theorem, our work shows that: (i)~if $\Phi[A] \leq c/n$ then there exists $(j\;j')$ with $\Inf_{(j\;j')}[A] \geq \exp(-O(c))$; (ii)~for \emph{any} $A \subseteq \slice{\hamlevel}$ with $\Omega(1) \leq \vol(A) \leq 1-\Omega(1)$, there exists $(j\;j')$ with $\Inf_{(j\;j')}[A] \geq \Omega\bigl(\frac{\log n}{n}\bigr)$.  From this, we can also derive a ``robust'' version of our Kruskal--Katona theorem (a~l\`a~\cite{OW13a}).

A closely related consequence of our work is a \textbf{Friedgut Junta Theorem} for the multislice (cf.~\cite{Fri98}), which follows (using a small amount of representation theory) from work of Wimmer~\cite{Wim14} (see also~\cite{Fil16a} for a different account).  It states that for any~$A$ with $\Phi[A] \leq c/n$, and any $\eps > 0$, there is a genuine $\exp(O(c/\eps))$-junta $A' \subseteq \slice{\hamlevel}$ that is $\eps$-close to~$A$, meaning \mbox{$\vol(A \symdiff A') \leq \eps$}. The junta theorem can also be generalized to real-valued functions, following the work of Bouyrie~\cite{Bou17}, with a worse dependence on $\eps$ in the exponent.

Finally, with a little more representation theory effort, we are able to derive from \Cref{thm:main} a \textbf{Nisan--Szegedy Theorem} for the multislice (cf.~\cite{NS94}), which is (roughly) an $\eps = 0$ version of the Friedgut Junta Theorem; this generalizes previous work on the Hamming slice~\cite{FI18b}.  It says that if $A \subseteq \slice{\hamlevel}$ is of ``degree~$k$'' --- meaning that its indicator function can be written as a linear combination of $k$-junta functions --- then $A$ must be an $\exp(O(k))$-junta itself.  (The $k = 1$ case of this theorem, with the conclusion that $A$ is a $1$-junta, was proven recently in~\cite{FI18a}.)

\subsection{Context and prior work}

In this section we review similar contexts where log-Sobolev inequalities and small-set expansion have been studied.

\paragraph{The Boolean cube.}  The simplest and best-known setting for these kinds of results is the Boolean cube~$\{0,1\}^n$ with the nearest-neighbour random walk.  The optimal hypercontractive inequality in this setting was proven by Bonami~\cite{Bon70}. Later, Gross~\cite{Gro75} introduced log-Sobolev inequalities, showed that they were equivalent to hypercontractive inequalities in this setting, and determined the exact log-Sobolev constant for the Boolean cube, namely $\varrho = 2/n$.  Gross also observed that all the same results also hold for Gaussian space in any dimension (recovering prior work of Nelson~\cite{Nel73}); Gaussian space is in fact a ``special case'' of the Boolean cube, by virtue of the Central Limit Theorem.  The Boolean cube also generalizes the well-studied \emph{Ehrenfest model} of diffusion~\cite{EE07}.

These inequalities for the Boolean cube, as well as the associated small-set expansion corollaries, have had innumerable applications in analysis, combinatorics, and theoretical computer science, in topics ranging from communication complexity to inapproximability; see, e.g.,~\cite{Led99} or~\cite[Chapters 9--11]{OD14}.

A different line of work sought to determine the exact minimum value of $\Phi[A]$ in terms of the size of $A$. This challenge, known as the \emph{edge isoperimetric problem}, has been solved by Harper~\cite{Harper64}, Lindsey~\cite{Lindsey64}, Bernstein~\cite{Bernstein67}, and Hart~\cite{Hart76}, who have shown that the optimal sets are initial segments of a lexicographic ordering of the vertices of the Boolean cube. Recently Ellis, Keller and Lifshitz gave a new proof of the edge isoperimetric inequality using the Kruskal--Katona Theorem~\cite{EKL17}. The same set of authors also recently proved a stability version of the edge isoperimetric inequality in the 99\% regime~\cite{EKL18}.

Returning to log-Sobolev inequalities, an extraordinarily helpful feature of the random walk on the Boolean cube is that it is a \emph{product Markov chain}, with a stationary distribution that is \emph{independent} across the $n$~coordinates.  Because of this, a simple induction lets one immediately reduce the log-Sobolev (and hypercontractivity) analysis to the base case of~$n = 1$.

\paragraph{Other product chains.}  For \emph{any} product Markov chain, one can similarly reduce the analysis to the~$n = 1$ case. In general, let $\nu$ be a probability distribution of full support on~$[\colors]$, and consider the Markov chain on $[\colors]^n$ in which a step from $u \in [\colors]^n$ consists of choosing a random coordinate $\bj \sim [n]$ and replacing $u_\bj$ with a random draw from~$\nu$.  The invariant distribution for this chain is the product distribution~$\nu^{\otimes n}$.   Though the $n = 1$ case of this chain is, in a sense, trivial --- it mixes perfectly in one step --- it is not especially easy to work out the optimal log-Sobolev constant.  Nevertheless, Diaconis and Saloffe-Coste~\cite{DS96} showed that for the $n = 1$ chain, the log-Sobolev constant is
\begin{equation}    \label{eqn:base-log-sob}
    \varrho^{\mathrm{triv}}_\nu = 2\frac{q-p}{\ln q - \ln p}, \quad \text{where } p = \min_{i \in [\colors]} \{\nu(i)\},\ q = 1-p.
\end{equation}
It follows immediately that the log-Sobolev constant in the general-$n$ case is $\varrho^{\mathrm{triv}}_\nu/n$.  In particular, if $\hamlevel_1 + \cdots + \hamlevel_\colors = n$ and $\nu(i) = \hamlevel_i/n$, then $\nu^{\otimes n}$ resembles the uniform distribution $\pi_\hamlevel$ on $\slice{\hamlevel}$, and the product chain on $[\colors]^n$ somewhat resembles the random transposition chain on $\slice{\hamlevel}$.  This gives credence to the possibility that \Cref{eqn:likely-truth} may hold with absolute constants for any~$\colors$.

\paragraph{The Boolean slice / Bernoulli--Laplace model / Johnson graph.}  Significant difficulties arise when one moves away from product Markov chains.  One of the simplest steps forward is to the Boolean slice.  This is the $\colors = 2$ case of the Markov chains studied in this paper, with the ``balanced'' case of $\hamlevel = (n/2,n/2)$ being the most traditionally studied.  This Markov chain is also equivalent to the Bernoulli--Laplace model for diffusion between two incompressible liquids, and to the standard random walk on \emph{Johnson graphs}; taking multiple steps in the chain is similar to the random walk in \emph{generalized Johnson graphs}. The chain has been studied in wide-ranging contexts, from genetics~\cite{Mor58}, to child psychology~\cite{PI76}, to computational learning theory~\cite{OW13a}.  An asymptotically exact analysis of the time to stationarity of this Markov chain was given by Diaconis and Shahshahani~\cite{DS87}, using representation theory. However, the log-Sobolev constant for the chain took a rather long time to be determined; it was left open in Diaconis and Saloff-Coste's 1996 survey~\cite{DS96} before finally being determined (up to constants) by Lee and Yau in 1998~\cite{LY98}.  This sharp log-Sobolev inequality, and its attendant hypercontractivity and small-set expansion inequalities, have subsequently been used in numerous applications --- for the Kruskal--Katona and Erd\H{o}s--Ko--Rado theorems in combinatorics~\cite{OW13a,DK16,FKMW18}, for computational learning theory~\cite{Wim09,OW13a}, for property testing~\cite{Mos14}, and for generalizing classic ``analysis of Boolean functions'' results~\cite{OW13a,OW13,Fil16a,Fil16b,FM16,FKMW18,Bou18}.

\paragraph{The Grassmann graph.}  One direction of generalization for the Johnson graphs are their ``$q$-analogues'', the \emph{Grassmann graphs}; understanding this Markov chain was posed as an open problem even in the early work of Diaconis and Shahshahani~\cite[Example~2]{DS87}.  For a finite field~$\F$ and integer parameters $n \geq k \geq 1$, the associated Grassmann graph has as its vertices all $k$-dimensional subspaces of~$\F^n$, with two subspaces connected by an edge if their intersection has dimension~$k-1$.  Understanding small-set expansion (and lack thereof) in the Grassmann graphs was central to the very recent line of work that positively resolved the $2$-to-$2$ Conjecture~\cite{KMS17,DKKMS18a,DKKMS18b,BKS18,KMS18} (with the analogous problems on the Johnson graphs serving as an important warmup~\cite{KMMS18}).  Still, it seems fair to say that the mixing properties of the Grassmann graph are far from being fully understood.

\paragraph{The multislice.}  We now come to the multislice, the other natural direction of generalization for the Johnson graphs, and the subject of the present paper.  One can see the multislice as a generalization of the Bernoulli--Laplace model, modeling diffusion between three or more liquids.  As well, the space of functions $f\colon \slice{\hamlevel} \to \R$, together with the action of $\symm{n}$ on $\slice{\hamlevel}$, is precisely the \emph{Young permutation module} $M^\hamlevel$ arising in the representation theory of the symmetric group. Understanding the mixing properties of the $\slice{\hamlevel}$ Markov chain with random transpositions was suggested as an open problem several times~\cite{DS87}, \cite[p.~59]{Dia88}, \cite{FI18b}.  The multislice has also played a key combinatorial role in problems in combinatorics, such as the Density Hales--Jewett problem (where~$\colors = 3$ was the main case under consideration)~\cite{Pol12}.

Although it might at first appear to be a simple generalization of the Boolean slice, there are several fundamental impediments that arise when moving from $\colors = 2$ even to $\colors = 3$.  These include: the fact that a Hamming slice disconnects the nearest-neighbour graph in $[2]^\colors$ but not in~$[3]^\colors$; the fact that one can introduce just \emph{one} variable per coordinate when representing functions \mbox{$[2]^\colors \to \R$} as multilinear polynomials; the fact that $2$-row irreps of $\symm{n}$ (Young diagrams) are completely defined by the number of boxes not in the first row; and, the fact that  when $\colors \geq 3$, the decomposition of the permutation module~$M^\hamlevel$ into irreps has multiplicities.  The last of these was the main difficulty to be overcome in Scarabotti's work~\cite{Sca97} giving the asymptotic mixing time for the transposition walk on balanced multislices~$\slice{(n/\colors, \dots, n/\colors)}$ (see also~\cite{DH02,ST10}). It also prevents the multislice from forming an association scheme.

For the purposes of this paper, the main difficulty that arises when analyzing the log-Sobolev inequality is the following: when $\colors = 2$, any nontrivial step in the Markov chain (switching a~$1$ and a~$2$) has the property that the histogram within $[\colors]^{n-2}$ of the unswitched colors is always the same: $(\hamlevel_1-1, \hamlevel_2 -1)$.  By contrast, once $\colors \geq 3$, the multiple ``kinds'' of transpositions (switching a~$1$ and a~$2$, or a~$1$ and a~$3$, or a~$2$ and a~$3$, etc.)\ lead to differing histograms within $[\colors]^{n-2}$ for the unswitched colors.  This significantly complicates inductive arguments.

\paragraph{The symmetric group and beyond.}  Finally, we mention that analysis of the multislice can also be motivated simply as a necessary first step in a full understanding of spectral analysis on the symmetric group and other algebraic structures, an opinion also espoused in, e.g.,~\cite{CFR11}. Such structures include classical association schemes such as polar spaces and bilinear forms, matrix groups such as the general linear group, and the $q$-analog of the multislice.

\section{Preliminaries} \label{sec:prelims}

\subsection{Definitions relevant for the log-Sobolev inequality}
Given parameters $\colors \in \N_+$ (number of colors) and $n \in \N_+$ (number of coordinates), our objects of study in this paper are \emph{multislices}, parametrized by a histogram $\hamlevel \in \N_+^\colors$ satisfying $\hamlevel_1 + \cdots + \hamlevel_\colors = n$:
\[
    \slice{\hamlevel} = \braces*{u \in [\colors]^n : \hamwt{i}{u} = \hamlevel_i \text{ for all } i \in [\colors]}.
\]
We will only consider multislices with at least two colors; in other words, $\colors \geq 2$.

We introduce the inner product space of functions on~$\slice{\hamlevel}$,
\[
    M^\hamlevel = \braces*{f\colon \slice{\hamlevel} \to \R}, \quad \text{with } \la f, g \ra = \E_{\bu \sim \pi}[f(\bu)g(\bu)],
\]
where
$\pi = \pi_\hamlevel$ denotes the uniform distribution on the multislice $\slice{\hamlevel}$.

Let $\Ker$ denote the \emph{transition / Markov operator} on $M^\hamlevel$ associated to the transposition random walk, defined by
\[
    \Ker f(u) = \E_{\btau \sim \transpos{n}}[f(u^\btau)],
\]
where $\transpos{n}$ consists of all $\binom{n}{2}$ transpositions.
Let $\Lap$ denote the \emph{Laplacian operator} $\bbone - \Ker$ (where $\bbone$ is the identity operator).  Then the \emph{energy} (or \emph{Dirichlet form}) of $f\colon \slice{\hamlevel} \to \R$ is
\[
    \Energy[f] = \la f, \Lap f \ra = \half \E_{\bu \sim \bv}\bracks*{\parens*{f(\bu) - f(\bv)}  ^2},
\]
where we have introduced the notation $\bu \sim \bv$ to denote that $(\bu, \bv)$ is a random edge in the Schreier graph; equivalently, $\bu \sim \pi$ and $\bv = \bu^\btau$ for $\btau \sim \transpos{n}$.  One may check that if $A \subseteq \slice{\hamlevel}$, then
\[
    \Energy[1_A] = \vol(A) \cdot \Phi[A],
\]
where $1_A \in M^\hamlevel$ denotes the $0$/$1$-indicator of~$A$.
(Recall that $\Phi[A]$ is the probability, over $\bu \sim A$ and $\btau \sim \transpos{n}$, that $\bu^\btau \notin A$.)

Before formally defining the log-Sobolev inequality for the transposition chain on $\slice{\hamlevel}$, we recall first its simpler counterpart, the \emph{Poincar\'e inequality}. For $f \in M^\hamlevel$, this is
\[
    \Energy[f] \geq \lambda_1 \cdot \Var_\pi[f],
\]
where $\lambda_1$ is the \emph{spectral gap}; i.e., the lowest eigenvalue of~$\Lap$ other than the trivial $\lambda_0 = 0$. For the transposition chain on $\slice{\hamlevel}$ it is known that $\lambda_1 = \frac{2}{n-1}$, with ``dictator'' functions and other ``degree-$1$'' functions providing the tight examples; see \Cref{cor:spectral-gap} in \Cref{sec:constant-degree}.

As for the log-Sobolev inequality, it is typically defined as
\[
    \Energy[f] \geq \tfrac12 \varrho \cdot \Ent[f^2];
\]
the largest constant~$\varrho$ that is acceptable for all $f \in M^\hamlevel$ being termed the log-Sobolev constant for~$\slice{\hamlevel}$.  Here $\Ent[g] = \E_{\pi}[g \ln g] - \E_{\pi}[g]\cdot \ln \E_{\pi}[g]$.  We remark that Diaconis and Saloff-Coste~\cite{DS96} showed that $\varrho \leq \lambda_1$ always holds.

In this work we will prefer a slightly different (equivalent) definition for the log-Sobolev inequality, used in~\cite{LY98}.  It's easy to see that replacing $f$ with $\abs{f}$ does not change $\Ent[f^2]$ but can only decrease $\Energy[f]$.  Thus in the log-Sobolev inequality it suffices to consider nonnegative~$f$.  Also, the inequality is $2$-homogeneous (both sides are multiplied by~$c^2$ when $f$ is multiplied by~$c$); thus it suffices to consider nonnegative~$f$ with $\E_\pi[f^2] = 1$. We write a general such~$f$ as $\sqrt{\phi}$, where $\phi\colon \slice{\hamlevel} \to \R^{\geq 0}$ satisfies $\E_\pi[\phi] = 1$.  We call such a $\phi$ a \emph{probability density function}, thinking of it as a relative probability density with respect to the uniform distribution $\pi = \pi_{\hamlevel}$.  In other words, we associate $\phi$ to the probability distribution in which $u \in \slice{\hamlevel}$ has probability mass $\phi(u)\pi(u)$.  Now when $\phi = f^2$, we have
\[
    \Ent[f^2] = \Ent[\phi] = \E_{\pi}[\phi \ln \phi] - \E_{\pi}[\phi]\cdot \ln \E_{\pi}[\phi] = \E_{\pi}[\phi \ln \phi],
\]
where we used $\ln \E_{\pi}[\phi] = \ln 1 = 0$.  This quantity is precisely the \emph{Kullback--Leibler divergence} (or \emph{relative entropy}) between the distribution $\phi \pi$ and the distribution~$\pi$, denoted $\KL{\phi \pi}{\pi}$.  Thus we have shown that the usual formulation of the log-Sobolev inequality is equivalent to
\begin{equation}    \label[ineq]{eqn:equiv-ls}
    \Energy\bracks*{\sqrt{\phi}} \geq \tfrac12 \varrho_\hamlevel \cdot \KL{\phi \pi}{\phi}
	\end{equation}
for all probability densities $\phi$ on~$\slice{\hamlevel}$, as stated in \Cref{eqn:logsob1}.

\subsection{Hypercontractivity, influences, and other preliminaries for our applications}
In this section we make some further definitions, which will be useful for our applications of the log-Sobolev inequality.
We first decompose $\Lap$ into its components on each transposition $\tau \in \transpos{n}$, introducing the operator $\Lap_\tau$ defined by
\[
    \Lap_{\tau}f(x) = f(u)-f(u^{\tau})
\]
Note that
\[
\Lap = \avg_{\tau \in \transpos{n}} \Lap_{\tau}.
\]
Now for $f : \slice\hamlevel \to \R$ we define  the \emph{influence} of  transposition $\tau \in \transpos{n}$ on~$f$ to be
\[
    \Inf_{\tau}[f] = \langle f, \Lap_\tau f \rangle = \langle  \Lap_\tau f, f \rangle = \tfrac12 \norm{\Lap_{\tau}f}_2^2
\]
(where we are using norm notation $\norm{f}_p = \E[|f|^p]^{1/p}$).  Note that if $A \subseteq \slice\hamlevel$ then we have the following combinatorial interpretation, agreeing with our notation from \Cref{eqn:inf-set}:
\[
    \Inf_\tau[A] = \Inf_\tau[1_A] = \Pr_{\bu \sim \pi_\hamlevel} [\bu \in A,\ \bu^\tau \not \in A] = \Pr_{\bu \sim \pi_\hamlevel} [\bu \not \in A,\ \bu^\tau \in A].
\]
For general $f : \slice\hamlevel \to \R$ we introduce the following additional notation, for the \emph{average influence} (equivalent to \emph{energy}), \emph{total influence}, and \emph{maximum influence} of~$f$:
\[
    \calE[f] = \avg_{\tau\in\transpos{n}} \Inf_{\tau}[f], \qquad
    \Inf[f] = \sum_{\tau \in \transpos{n}} \Inf_\tau[f]  = \tbinom{n}{2} \calE[f] \qquad
    \calM[f] = \max_{\tau\in\transpos{n}} \Inf_{\tau}[f].
\]

As recounted in the important survey of Diaconis and Saloff-Coste~\cite{DS96}, there is an equivalence between log-Sobolev inequalities and hypercontractivity for reversible Markov chains.
To explain what hypercontractivity means in this abstract setting, we first define the continuous-time analog of the random transposition walk. This is the continuous-time Markov chain, running from time $t=0$ to $t=\infty$, in which (informally) in any interval of infinitesimal length~$dt$, one performs step of the random transposition chain with probability~$dt$.

More formally, we can provide the following alternative description: if we initialize the continuous-time Markov chain at state~$u$, then its state $\bu_t$ at time $t \geq 0$ is obtained by performing $\mathrm{Poisson}(t)$ random transpositions on $u$. From this definition we may define  \emph{noise operator} (or \emph{heat kernel})~$\Heat_t$ on~$M^\hamlevel$:
\[
    \Heat_t f(u) = \E[f(\bu_t)].
\]
It's well known that we can also express $\Heat_t$ in terms of the Laplacian operator, $\Heat_t = e^{-t\Lap}$.

Diaconis and Saloff-Coste~\cite[Theorem 3.5(ii)]{DS96} show that the log-Sobolev inequality implies hypercontractivity:
\begin{theorem} \label{thm:hypercontractivity}
For $q \geq 2$ and $\displaystyle t = \frac{\ln (q-1)}{2\varrho_\hamlevel}$, the following hold for all $f \in M^\hamlevel$:
\[
    \|\Heat_t f\|_q \leq \|f\|_2,  \qquad
    \|\Heat_t f\|_2 \leq \|f\|_{q'},
\]
where $q'$ is the H\"older conjugate of~$q$ (meaning $1/q + 1/q' = 1$).
\end{theorem}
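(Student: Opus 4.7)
The plan is to follow Gross's classical argument deducing hypercontractivity from a log-Sobolev inequality for reversible Markov chains. By the pointwise bound $|\Heat_t f| \leq \Heat_t |f|$ and a standard $\ep$-regularization, I may assume $f > 0$ strictly. Set $f_t = \Heat_t f$ and introduce a time-varying exponent $q(t)$ with $q(0) = 2$, to be chosen so that $q(t) = q$ exactly when $t = \frac{\ln(q-1)}{2\varrho_\hamlevel}$. The goal is to show that $t \mapsto \|f_t\|_{q(t)}$ is nonincreasing; evaluating at the stated time then yields $\|\Heat_t f\|_q \leq \|f\|_2$.

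Differentiating $N(t) = \E_\pi[f_t^{q(t)}]$ in $t$ and using $\partial_t f_t = -\Lap f_t$ gives
\[
    N'(t) \;=\; q'(t)\, \E_\pi[f_t^{q(t)} \ln f_t] \;-\; q(t)\, \la f_t^{q(t)-1}, \Lap f_t \ra.
\]
Writing $g = f_t^{q(t)/2}$, the first term rearranges to $\frac{q'(t)}{q(t)}\bigl(\Ent[g^2] + \|g\|_2^2 \ln \|g\|_2^2\bigr)$, and applying the log-Sobolev inequality \Cref{eqn:equiv-ls} to the density $g^2/\|g\|_2^2$ gives $\Ent[g^2] \leq \frac{2}{\varrho_\hamlevel}\Energy[g]$. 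For the second term I invoke the pointwise inequality $(a^{q-1}-b^{q-1})(a-b) \geq \tfrac{4(q-1)}{q^2}(a^{q/2}-b^{q/2})^2$, valid for $a,b\geq 0$ and $q \geq 1$ by Cauchy--Schwarz applied to $a^{q-1}-b^{q-1} = (q-1)\int_b^a x^{q-2}\,dx$ against $a^{q/2}-b^{q/2} = \tfrac{q}{2}\int_b^a x^{q/2-1}\,dx$; averaging over a uniformly random Schreier edge yields $\la f_t^{q(t)-1}, \Lap f_t\ra \geq \frac{4(q(t)-1)}{q(t)^2}\Energy[g]$.

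Combining these bounds, the coefficient of $\Energy[g]$ in the resulting upper bound on $N'(t)$ is $\frac{2q'(t)}{\varrho_\hamlevel q(t)} - \frac{4(q(t)-1)}{q(t)}$. The main technical obstacle of the whole argument is arranging that this coefficient vanishes exactly, so that only a pure logarithmic term remains — this is what forces the ODE $q'(t) = 2\varrho_\hamlevel(q(t)-1)$, which integrated with $q(0) = 2$ gives $q(t) = 1 + e^{2\varrho_\hamlevel t}$ and recovers the stated correspondence between $q$ and $t$. What survives is $N'(t) \leq \frac{q'(t)}{q(t)}N(t)\ln N(t)$, and combining with $\ln \|f_t\|_{q(t)} = \frac{1}{q(t)}\ln N(t)$ shows $\frac{d}{dt}\ln \|f_t\|_{q(t)} \leq 0$, giving the first inequality. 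The second inequality is then immediate from duality: since $\Heat_t$ is self-adjoint on $M^\hamlevel$, for any $h$ with $\|h\|_2 \leq 1$ one has $|\la \Heat_t f, h\ra| = |\la f, \Heat_t h\ra| \leq \|f\|_{q'}\|\Heat_t h\|_q \leq \|f\|_{q'}$ by the first inequality applied to $h$, and taking a supremum yields $\|\Heat_t f\|_2 \leq \|f\|_{q'}$.
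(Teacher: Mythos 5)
Your proof is correct. The paper itself does not prove this theorem — it simply cites Diaconis and Saloff-Coste~\cite{DS96} (Theorem~3.5(ii)) for the first inequality and appeals to self-adjointness of~$\Heat_t$ (pointing to~\cite{OD14}, Prop.~9.19) for the second. Your argument is the standard Gross-style derivation that those references carry out: differentiating $\|\Heat_t f\|_{q(t)}$ along the semigroup, applying the log-Sobolev inequality in the form $\Ent[g^2] \leq \tfrac{2}{\varrho_\hamlevel}\Energy[g]$ to control the entropy term, applying the Stroock--Varopoulos pointwise inequality $(a^{q-1}-b^{q-1})(a-b) \geq \tfrac{4(q-1)}{q^2}(a^{q/2}-b^{q/2})^2$ to control the Dirichlet term, and choosing $q(t) = 1 + e^{2\varrho_\hamlevel t}$ to cancel the $\Energy[g]$ contribution. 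The reduction to $f>0$ via $|\Heat_t f|\leq\Heat_t|f|$ and $\ep$-regularization is handled properly, and the duality argument for the second inequality (using self-adjointness of $\Heat_t$ plus H\"older) is exactly the observation the paper attributes to~\cite{OD14}. So while the paper treats this as a black-box citation, your proof is a faithful, self-contained reproduction of the argument in the cited source.
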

\noindent (The first inequality directly appears in~\cite{DS96}; the second statement appears only implicitly.  It is a consequence of $\Heat_t f$ being a self-adjoint operator; see, e.g.,~\cite[Prop.~9.19]{OD14}.)

In all our applications, we actually use hypercontractivity, rather than log-Sobolev directly.

\subsection{Two numerical lemmas}
Here we give some two elementary numerical lemmas we'll need for our proofs.  We start by computing and bounding the inverse moment of a hypergeometric random variable.  The following simple fact may well be in the literature; the most relevant citation we found was~\cite{Gov64}:
\begin{lemma}                                       \label{lem:inv-hypergeom}
    Let $\bX \sim \mathrm{Hypergeometric}(N,K,n)$.\footnote{In other words, $\bX$ is the number of white balls among $n$ random balls drawn (without replacement) from an urn containing $N$ balls in total, $K$ of which are white.}  Then, writing $p = 1- \binom{N-K}{n+1}/\binom{N+1}{n+1}$, it holds that
    \[
        \E\bracks*{\frac{1}{\bX+1}} = p \frac{N+1}{(n+1)(K+1)} \leq \frac{N+1}{(n+1)(K+1)}.
    \]
\end{lemma}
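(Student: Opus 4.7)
The plan is a direct computation with a binomial identity. Writing out the expectation using the hypergeometric PMF,
\[
    \E\bracks*{\frac{1}{\bX+1}} = \frac{1}{\binom{N}{n}} \sum_{k=0}^{n} \frac{1}{k+1} \binom{K}{k}\binom{N-K}{n-k}.
\]
The key step is absorbing the $\frac{1}{k+1}$ factor into the binomial coefficient via the standard identity
\[
    \frac{1}{k+1}\binom{K}{k} = \frac{1}{K+1}\binom{K+1}{k+1},
\]
which follows by expanding both sides as factorials. This ``promotes'' $K$ to $K+1$ and raises the upper index of summation by one.

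After substituting, I would re-index with $j = k+1$ to obtain
\[
    \E\bracks*{\frac{1}{\bX+1}} = \frac{1}{(K+1)\binom{N}{n}} \sum_{j=1}^{n+1} \binom{K+1}{j}\binom{N-K}{(n+1)-j}.
\]
Now I would invoke the Vandermonde convolution $\sum_{j=0}^{n+1}\binom{K+1}{j}\binom{N-K}{(n+1)-j} = \binom{N+1}{n+1}$, noting that the missing $j=0$ term contributes exactly $\binom{N-K}{n+1}$. Hence the sum equals $\binom{N+1}{n+1} - \binom{N-K}{n+1}$.

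Finally, using $\binom{N+1}{n+1} = \frac{N+1}{n+1}\binom{N}{n}$ to cancel $\binom{N}{n}$ in the denominator, the expression collapses to
\[
    \frac{N+1}{(n+1)(K+1)}\parens*{1 - \binom{N-K}{n+1}\Big/\binom{N+1}{n+1}} = p \cdot \frac{N+1}{(n+1)(K+1)},
\]
which is the claimed equality. The bound $p \leq 1$ is immediate because $\binom{N-K}{n+1}$ is nonnegative. There is no real obstacle here; the only slightly delicate point is getting the index shift and the missing $j=0$ term of the Vandermonde convolution correct.
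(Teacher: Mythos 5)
Your proof is correct and takes a genuinely different route from the paper's. You compute the expectation directly from the hypergeometric PMF, absorb the $\tfrac{1}{k+1}$ factor via the identity $\tfrac{1}{k+1}\binom{K}{k} = \tfrac{1}{K+1}\binom{K+1}{k+1}$, re-index, and close the sum with a Vandermonde convolution (subtracting the missing $j=0$ term $\binom{N-K}{n+1}$), then simplify using $\binom{N+1}{n+1} = \tfrac{N+1}{n+1}\binom{N}{n}$; this is a clean mechanical calculation and each step checks out. The paper instead proves the identity by double-counting a probabilistic experiment: it places $N+1$ balls in an urn with $K+1$ white balls, one of them ``special'', draws $n+1$ balls, selects a uniformly random drawn white ball (if any), and computes the probability of selecting the special ball in two ways, once by conditioning on the special ball being drawn (yielding $\tfrac{n+1}{N+1}\E[\tfrac{1}{\bX+1}]$) and once by conditioning on at least one white ball being drawn (yielding $\tfrac{p}{K+1}$). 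Both approaches give exactly the same closed form. The paper's argument is shorter, avoids binomial bookkeeping, and makes the meaning of $p$ transparent as the probability that at least one white ball is drawn; your algebraic argument requires no clever setup and is easy to verify line by line, at the cost of being less illuminating about where $p$ comes from.
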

\begin{proof}
    Take an urn with $N+1$ balls, $K+1$ of them white, one of the white balls being ``special''.  Consider an experiment in which we draw $n+1$ balls without replacement, and then choose a random white ball among the ones drawn (if any).  A ``success'' occurs if the randomly chosen white ball is the special one.

    A necessary condition for the experiment to succeed is that the special white ball was chosen at all, which happens with probability $\frac{n+1}{N+1}$.  Assuming that this happened, the number of remaining white balls drawn is distributed as $\bX \sim \mathrm{Hypergeometric}(N,K,n)$.  Thus the probability that we finally choose the special ball is $\frac{n+1}{N+1} \E[\frac{1}{\bX+1}]$.

    Let us now think of the experiment in a different way.  A necessary condition for success is that at least one white ball is drawn, which happens with probability~$p$.  Given that this occurs, the finally chosen white ball is just a random white ball (among all $K+1$ white balls), so the probability that it is the special one is exactly $\frac{1}{K+1}$.  Thus
    \[
        \frac{n+1}{N+1} \E\left[\frac{1}{X+1}\right] = \frac{p}{K+1},
    \]
    completing the proof of the lemma.
\end{proof}

Next, we give a bound on the log-Sobolev constant from \Cref{eqn:base-log-sob} that is more tractable.
\begin{lemma}   \label{lem:triv-bound}
    Let $\nu$ be a probability distribution of full support on~$[\colors]$.  Then the log-Sobolev constant~$\varrho^{\mathrm{triv}}_\nu$ for the associated trivial Markov chain, given in \Cref{eqn:base-log-sob}, satisfies the bound
    \[
        (\varrho^{\mathrm{triv}}_\nu)^{-1} \leq \frac12 \sum_{i=1}^\colors \lg\parens*{\frac{1}{\nu_i}},
    \]
    where $\lg$ denotes $\log_2$.
\end{lemma}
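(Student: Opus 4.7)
My plan has two steps: first reduce to the two-color case $\colors = 2$, and then dispatch that case by a direct power-series comparison.

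For the reduction, set $p = \min_i \nu_i$ and $q = 1 - p$. The left-hand side $(\varrho^{\mathrm{triv}}_\nu)^{-1}$ is a function of $p$ alone, so it suffices to show that the right-hand side is at least as large as its $\colors = 2$ counterpart $\tfrac12[\lg(1/p) + \lg(1/q)]$. Peeling off the minimum term, this reduces to the claim $\prod_{i : \nu_i \ne p} \nu_i \le q$, which holds because each of the $\colors - 1 \ge 1$ factors in this product is at most the full sum $q$, so the product is at most $q^{\colors-1} \le q$.

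For the $\colors = 2$ case, rearranging yields the target inequality
\[
    (q-p)\bigl[-\ln(pq)\bigr] \;\ge\; \ln 2 \cdot \ln(q/p), \qquad p + q = 1, \ p \le \tfrac12.
\]
The plan is to substitute $p = \tfrac12 - \delta$, $q = \tfrac12 + \delta$ for $\delta \in [0, \tfrac12)$, so that $pq = \tfrac14 - \delta^2$ and $q/p = (1 + 2\delta)/(1 - 2\delta)$, and then expand both sides using the standard series
\[
    \ln\tfrac{1 + 2\delta}{1 - 2\delta} = 2\sum_{k \ge 0} \tfrac{(2\delta)^{2k+1}}{2k+1}, \qquad -\ln(1 - 4\delta^2) = \sum_{k \ge 1} \tfrac{(4\delta^2)^k}{k}.
\]
The leading $4\delta \ln 2$ contributions on both sides cancel, and the remaining difference is a power series whose $\delta^{2k+1}$ coefficient (for $k \ge 1$) is proportional to $\tfrac{1}{k} - \tfrac{2\ln 2}{2k+1} = \tfrac{2k(1-\ln 2) + 1}{k(2k+1)}$, which is positive for every $k \ge 1$ because $\ln 2 < 1$.

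The main obstacle is that the inequality is tight at the balanced point $p = q = \tfrac12$ (both sides of the lemma equal~$1$), so any one-shot comparison with standard means --- for example $L(p,q) \ge \sqrt{pq}$ for the logarithmic mean --- loses too much and fails once $p$ becomes small. This forces the term-by-term series comparison above; fortunately, once the series is written down, positivity of every coefficient is the elementary bound $2k + 1 > 2k \ln 2$.
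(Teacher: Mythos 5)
Your proof is correct, and it takes a genuinely different route from the paper on both steps. For the reduction to $\colors = 2$, the paper applies convexity of $t \mapsto \lg(1/t)$ to get $\sum_{i\ne j}\lg(1/\nu_i) \geq (\colors-1)\lg\frac{\colors-1}{q}$ and then drops to $\lg(1/q)$ using $\colors \geq 2$; you instead observe directly that, after removing one minimizing index $j$, each remaining $\nu_i$ is at most the residual mass $q$, so $\prod_{i\ne j}\nu_i \leq q^{\colors-1}\leq q$, which is a slightly more elementary argument to the same place. (Your notation $\prod_{i:\nu_i\ne p}$ should really be $\prod_{i\ne j}$ for a single chosen minimizer $j$ --- as literally written the product could be empty when $\nu$ is uniform --- but the intended argument is clear and correct.) For the two-color core inequality, the paper observes that both sides vanish at $p=q=\tfrac12$ and compares derivatives in $\delta$, reducing to an elementary nonnegativity; you instead expand both sides in power series around $\delta=0$ and compare coefficients termwise, with the $k=0$ terms cancelling exactly and each higher coefficient inequality reducing to $2k+1 > 2k\ln 2$. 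The series argument is arguably a bit cleaner in that it avoids differentiating a logarithm-of-logarithm expression and makes the tightness at $\delta=0$ manifest; the derivative argument avoids infinite series and keeps the manipulations finite. Either is a complete and correct proof.
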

\begin{proof}
    Let $p = \min_{i \in [\colors]} \{\nu(i)\}$, and assume without loss of generality that this minimum is achieved by $i = \colors$.
    Recall the formula of Diaconis and Saloffe-Coste~\cite{DS96},
\[
    \varrho^{\mathrm{triv}}_\nu = 2\frac{q-p}{\ln q - \ln p}, \quad \text{where } q = 1-p.
\]
	What we need to show is
    \[
        \frac{\ln(1/p) - \ln(1/(1-p))}{1-2p} \leq \lg(1/p) + \sum_{i=1}^{\colors-1} \lg(1/\nu(i)).
    \]
    By convexity of $t \mapsto \lg(1/t)$ for $t \in (0,1]$, the right-hand side above is at least
    \[
        \lg(1/p) + (\colors-1)\lg\parens*{\frac{\colors-1}{1-p}} \geq \lg(1/p) + \lg(1/(1-p)),
    \]
    where the second inequality used $\colors \geq 2$.  Thus it suffices to show
    \[
        \ln(1/p) - \ln(1/(1-p)) \leq (1-2p)(\lg(1/p) + \lg(1/(1-p))), \quad 0 < p \leq 1/2.
    \]
    (This inequality is simply the lemma we are trying to prove, restricted to the case $\colors = 2$.)

    Write $p = 1/2 - \delta/2$, where $\delta \in [0,1)$.  Both sides of the above inequality are zero for $\delta = 0$; thus to establish the inequality it suffices to show the right-hand side's derivative is at least the left-hand side's.  Taking derivatives, we need to show
    \[
        \frac{2}{1-\delta^2} \leq \lg\parens*{\frac{4}{1-\delta^2}} + \frac{2\delta^2}{(\ln 2)(1-\delta^2)}, \quad 0 \leq \delta < 1.
    \]
    Multiplying this by $\frac{1-\delta^2}{2} > 0$ gives
    \[
        1 \leq 1-\delta^2 + \frac12(1-\delta^2)\lg\parens*{\frac{1}{1-\delta^2}} + \frac{\delta^2}{\ln 2} \quad\iff\quad 0 \leq \parens*{\frac{1}{\ln 2}-1}\delta^2 + \frac12(1-\delta^2)\lg\parens*{\frac{1}{1-\delta^2}},
    \]
    which is evidently true as $\frac{1}{\ln 2}-1 > 0$ and $0 < 1-\delta^2 \leq 1$.
\end{proof}

\section{Bounding the log-Sobolev constant}
In this section we will frequently identify a histogram $\hamlevel \in \N_+^\colors$ with the associated \emph{multiset} of colors, namely the multiset with $\hamlevel_i$ copies of~$i$ for each $i \in [\colors]$.  Thus we may write $n = |\hamlevel| = \hamlevel_1 + \cdots + \hamlevel_\colors$.  We will also use the notation $\bi \sim \hamlevel$ to mean that $\bi$ is chosen uniformly at random from the multiset~$\hamlevel$; i.e., according to the probability distribution on $[\colors]$ in which~$i$ has probability~$\hamlevel_i/n$.  We will write $\ul{\hamlevel}$ for this probability distribution, and will need to refer to the log-Sobolev constant $\varrho^{\mathrm{triv}}_{\ul{\hamlevel}}$ from \Cref{eqn:base-log-sob}.

Let us introduce one more piece of notation:
if $\varrho_\hamlevel$ denotes the optimal log-Sobolev constant for the transposition Markov chain on the multislice $\slice{\hamlevel}$, we will write
\[
    \invLS{\hamlevel} = \varrho_{\hamlevel}^{-1}.
\]
Thus the goal of our \Cref{thm:main}
is to upper-bound $\invLS{\hamlevel}$.  The midpoint of the proof will be establishing the following inductive bound:
\begin{lemma}                                     \label{lem:induction-structure}
    Let $\hamlevel \in \N_+^\colors$ with $n = |\hamlevel| > 2$. Then
    \begin{equation}    \label[ineq]{eqn:recursion}
        \invLS{\hamlevel} \leq
            \tfrac{n-1}{n}\cdot (\varrho^{\mathrm{triv}}_{\ul{\hamlevel}})^{-1} + \max_{\substack{i_1,i_2 \in [\colors] \\ \textnormal{distinct}}} \braces*{\E_{\bi \sim \hamlevel \setminus \{i_1,i_2\}} \invLS{\hamlevel \setminus \bi}}.
    \end{equation}
\end{lemma}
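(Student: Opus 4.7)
The plan is to apply the chain rule for KL divergence, conditioning on a distinguished coordinate~$n$. Under $\pi = \pi_\hamlevel$ the marginal of $u_n$ is $\ul{\hamlevel}$, and conditioning on $u_n = i$ gives the uniform distribution $\pi^{(i)}$ on $\slice{\hamlevel \setminus i}$. Writing $\phi_n$ for the marginal density of $u_n$ relative to $\ul{\hamlevel}$ and $\phi^{(i)}$ for the conditional density on $\slice{\hamlevel \setminus i}$, the chain rule produces
\[
    \KL{\phi \pi}{\pi} = \KL{\phi_n \ul{\hamlevel}}{\ul{\hamlevel}} + \E_{\bi \sim \phi_n \ul{\hamlevel}}\bracks*{\KL{\phi^{(\bi)} \pi^{(\bi)}}{\pi^{(\bi)}}}.
\]
I would bound the first summand by the trivial log-Sobolev inequality with constant $\varrho^{\mathrm{triv}}_{\ul{\hamlevel}}$, and each conditional KL by the inductive bound $\invLS{\hamlevel \setminus \bi}$.

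The next task is to control the resulting energies in terms of $\Energy_\hamlevel[\sqrt{\phi}]$. I would first symmetrize by averaging the chain rule over all choices of distinguished coordinate $m \in [n]$, which by permutation-invariance preserves the LHS but produces cleaner identities on the RHS. Decomposing $\Energy_\hamlevel[\sqrt{\phi}]$ into contributions from ``inner'' transpositions in $\transpos{n}$ avoiding coordinate $m$ versus ``outer'' transpositions using it, the inner contribution equals $\binom{n-1}{2}/\binom{n}{2} = (n-2)/n$ times the conditional energy $\Energy_{\hamlevel \setminus \bi}[\sqrt{\phi^{(m,\bi)}}]$, so averaging over both $m$ and $\bi \sim \phi_m \ul{\hamlevel}$ yields the identity $\avg_m \E_{\bi} \Energy_{\hamlevel \setminus \bi}[\sqrt{\phi^{(m,\bi)}}] = \Energy_\hamlevel[\sqrt{\phi}]$. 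A parallel bound for the averaged marginal, $\avg_m \Var_{\ul{\hamlevel}}[\sqrt{\phi_m}] \leq \tfrac{n-1}{2n}\,\Energy_\hamlevel[\sqrt{\phi}]$, follows from a Jensen-type argument together with the $n/(n-1)$ correction between the joint $(u_j, u_n)$ distribution under $\pi$ and two independent $\ul{\hamlevel}$-draws (i.e., sampling without versus with replacement), and it is this correction that supplies exactly the $(n-1)/n$ coefficient on the base-chain term.

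The main obstacle is handling the $\phi$-dependence of the inductive-side expression $\avg_m \sum_{\bi} \ul{\hamlevel}(\bi)\,\phi_m(\bi)\,\invLS{\hamlevel \setminus \bi}\,\Energy_{\hamlevel \setminus \bi}[\sqrt{\phi^{(m,\bi)}}]$: because $\phi_m$ depends on $\phi$, one cannot directly factor out an expected value of $\invLS{\hamlevel \setminus \bi}$. My plan is to introduce a second conditioning: once coordinate $m$ is pinned to a color $i_1$, further pin a second coordinate to a color $i_2$. The residual conditional distribution on the color at any other coordinate is then exactly $\hamlevel \setminus \{i_1, i_2\}$, independent of~$\phi$. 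Passing to a worst-case supremum over the pinning pair $(i_1, i_2)$ removes the $\phi$-dependence and yields $\max_{i_1 \neq i_2} \E_{\bi \sim \hamlevel \setminus \{i_1, i_2\}}\invLS{\hamlevel \setminus \bi}$ as the remaining factor, matching the statement. Verifying that this second conditioning meshes with the first-step energy decomposition without spoiling the $(n-1)/n$ coefficient is where the bulk of the combinatorial bookkeeping concentrates.
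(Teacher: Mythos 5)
Your proposal follows the same overall scaffolding as the paper's proof: chain rule for KL divergence with respect to a distinguished coordinate, trivial log-Sobolev bound on the marginal term, inductive log-Sobolev bound on the conditional term, and symmetrization over the distinguished coordinate. The identity $\avg_m \E_{\bi}\,\Energy_{\hamlevel\setminus\bi}[\sqrt{\phi^{(m,\bi)}}] = \Energy_{\hamlevel}[\sqrt{\phi}]$ is correct, and the observation that the inner/outer split of $\transpos{n}$ gives a $(n-2)/n$ weight is exactly what drives this.

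However, the crucial final step — where you claim to ``pin coordinate $m$ to a color $i_1$, further pin a second coordinate to a color $i_2$'' and then take a worst case over the pinning pair — has the roles reversed, and as written it does not produce $\E_{\bi\sim\hamlevel\setminus\{i_1,i_2\}}\invLS{\hamlevel\setminus\bi}$. The inductive parameter fed into $\invLS{\cdot}$ is the color at the distinguished (conditioned) coordinate $m$; it is \emph{this} color whose distribution needs to be identified as $\hamlevel\setminus\{i_1,i_2\}$, and $i_1,i_2$ are \emph{not} pinned colors but rather the two colors lying at the positions swapped by the transposition $\btau$. The paper's argument (after the change of measure, writing the conditional energy $\Energy_{\hamlevel\setminus\bu_m}[\cdot]$ out as an expectation over transpositions $\btau$ of $[n]\setminus\{m\}$, and then averaging over $m$) is that sampling $m$ uniformly and then $\btau\sim\transpos{[n]\setminus\{m\}}$ is equivalent to sampling $\btau\sim\transpos{n}$ and then $\bm$ uniformly from $\mathrm{Fix}(\btau)$. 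Conditioned on $\btau$ swapping two positions carrying distinct colors $i_1,i_2$ (the case of equal colors contributes zero energy), the color $\bu_{\bm}$ at a uniform fixed point is distributed exactly as $\hamlevel\setminus\{i_1,i_2\}$. Hence $\E_{\bm\sim\mathrm{Fix}(\btau)}\invLS{\hamlevel\setminus\bu_{\bm}} = \E_{\bi\sim\hamlevel\setminus\{i_1,i_2\}}\invLS{\hamlevel\setminus\bi}$, and bounding over the worst $(i_1,i_2)$ gives the $\max$. In short: the $\max$ runs over the \emph{transposed} pair of colors, not a pinned pair, and the random variable averaged over $\hamlevel\setminus\{i_1,i_2\}$ is the color at the distinguished coordinate, not the colors elsewhere. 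Your description inverts this and would not extract the right weight without correcting the bookkeeping. A smaller note: the coefficient in your marginal-side bound should be $\tfrac{n-1}{n}$, not $\tfrac{n-1}{2n}$; one easily checks $\avg_m \Var_{\ul{\hamlevel}}[\sqrt{\phi_m}] \leq \tfrac{n-1}{n}\Energy_{\hamlevel}[\sqrt{\phi}]$, which combined with the factor $2(\varrho^{\mathrm{triv}}_{\ul\hamlevel})^{-1}$ from the trivial log-Sobolev inequality yields exactly the $\tfrac{n-1}{n}\cdot(\varrho^{\mathrm{triv}}_{\ul\hamlevel})^{-1}$ term in the statement.
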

Given \Cref{lem:induction-structure}, the deduction of  \Cref{thm:main}  will be elementary, though not completely straightforward; this is in \Cref{sec:finish-recursion}.  As for the deduction of \Cref{lem:induction-structure} itself, it will mostly follow the proof Lee and Yau used~\cite{LY98} to analyze $\varrho_\hamlevel$ in the cases $\colors = 2$ (the Hamming slice) and $\colors = n$ (the symmetric group~$\symm{n}$).  Notice, however, that in both of these cases the ``max'' appearing in \Cref{eqn:recursion} becomes superfluous.  In  the $\colors = 2$ case, the only possibility for $\{i_1, i_2\}$ is $\{1,2\}$, so we have the much simpler recursion
\[
    \invLS{\hamlevel} \leq
        \tfrac{n-1}{n}\cdot (\varrho^{\mathrm{triv}}_{\ul{\hamlevel}})^{-1} + \E_{\bi \sim \hamlevel \setminus \{1,2\}} \invLS{\hamlevel \setminus \bi}.
\]
In the $\colors = n$ case (meaning $\hamlevel_i = 1$ for all $i \in [\colors]$), we see by symmetry that every choice of $i_1, i_2$ leads to an isomorphic subproblem, that of bounding the log-Sobolev constant for~$\symm{n-1}$.  That is, we have the even simpler recursion
\[
    \invLS{(1^n)} \leq
        \tfrac{n-1}{n}\cdot (\varrho^{\mathrm{triv}}_{(1/n, \dots, 1/n)})^{-1} + \invLS{(1^{n-1})} \lesssim \tfrac12 \ln n + \invLS{(1^{n-1})},
\]
where the asymptotic inequality used \Cref{eqn:base-log-sob}.  This recursion straightforwardly yields the known bound for the symmetric group, $\invLS{(1^N)} \lesssim \frac{1}{2} n \ln n$.  A key point of our work is recognizing that one can use the Lee--Yau methodology to obtain \Cref{eqn:recursion}, and that despite its somewhat complicated form, this recursion can be solved to yield a good bound.

\subsection{Proving \texorpdfstring{\Cref{lem:induction-structure}}{Lemma \ref{lem:induction-structure}}}
Although much of the proof of \Cref{lem:induction-structure} is from~\cite{LY98}, we recapitulate it here for completeness and clarity.  Fix $\hamlevel \in \N_+^\colors$ with $n = |\hamlevel| > 2$.  Recall from \Cref{eqn:equiv-ls} that $\invLS{\hamlevel}$ is the smallest constant such that
\begin{equation}    \label[ineq]{eqn:begin}
    \KL{\phi \pi}{\pi} \leq 2 \invLS{\hamlevel} \cdot \Energy\bracks*{\sqrt{\phi}}
\end{equation}
holds for all probability densities on~$\slice{\hamlevel}$.

The way we recursively bound $\invLS{\hamlevel}$ involves applying the chain rule for KL divergence to~\mbox{$\KL{\phi \pi}{\pi}$}. We will set up the notation for invoking the chain rule with respect to the $n$th coordinate.  In fact, we will eventually apply it for \emph{each} coordinate $k \in [n]$, and then take expectations over a uniformly random~$k$.  However, it will be notationally convenient to focus just on the $k = n$ case.

To this end, given a probability distribution~$\xi$ on $\slice{\hamlevel}$ (which will be either $\phi \pi$ or $\pi$), we will write~$\xi_n$ to denote its marginal on the $n$th coordinate (a probability distribution on~$[\colors]$). Also, given a particular $a \in [\colors]$, we will write $\xi_{|a}$ to denote $\xi$'s distribution on $\slice{\hamlevel}$ conditioned on the last coordinate having color~$a$.

To begin the analysis of \Cref{eqn:begin}, let us write
\[
    \phantom{\quad \text{(a probability distribution on $\slice{\hamlevel}$)}} \psi = \phi \pi \quad \text{(a probability distribution on $\slice{\hamlevel}$)}
\]
and then apply the chain rule for KL divergence with respect to the $n$th coordinate:
\[
    \KL{\psi}{\pi} = \underbrace{\KL{\psi_n}{\pi_n}\vphantom{\E_{\ba \sim \psi_n}\bracks*{\KL{\psi_{|\ba}}{\pi_{|\ba}}}}}_{\text{MARGINAL}_n} + \underbrace{\E_{\ba \sim \psi_n}\bracks*{\KL{\psi_{|\ba}}{\pi_{|\ba}}}}_{\text{CONDITIONAL}_n}.
\]
We will now bound~MARGINAL${}_n$ and~CONDITIONAL${}_n$.  In each case we will prove a bound that has a certain ``dependence on the $n$th coordinate''.  We will then  remark that we could have equally well proved an analogous bound involving the $k$th coordinate, for any $k \in [n]$.  Finally, we will average this analogous bound over all $k \in [n]$.  Adding the two averaged bounds from the MARGINAL and CONDITIONAL cases yields a valid upper bound on $\KL{\psi}{\pi}$,
\begin{equation}    \label[ineq]{eqn:fika0}
    \KL{\psi}{\pi} \leq \avg_{\bk \sim [n]}\Bigl\{\text{bound on MARGINAL}_{\bk}\Bigr\} + \avg_{\bk \sim [n]}\Bigl\{\text{bound on CONDITIONAL}_{\bk}\Bigr\}.
\end{equation}
From this we will derive a recursive upper bound on $\invLS{\hamlevel}$ via \Cref{eqn:begin}.
\subsubsection{Bounding \texorpdfstring{MARGINAL${}_n$}{MARGINALn}}
Our bound on MARGINAL${}_n$ is from~\cite{LY98}.  We think of the trivial Markov chain on $[\colors]$ with invariant distribution~$\pi_n$, noting that $\pi_n$ is nothing more than $\ul{\hamlevel}$.  Applying the associated log-Sobolev inequality, we get
\begin{equation}    \label[ineq]{eqn:get-started}
    \KL{\psi_n}{\pi_n} \leq 2\cdot (\varrho^{\mathrm{triv}}_{\ul{\hamlevel}})^{-1} \cdot \Energy_{\pi_n}\bracks*{\sqrt{\psi_n/\pi_n}},
\end{equation}
where we wrote $\Energy_{\pi_n}$ to denote the energy functional for the trivial Markov chain. It is simple to check that the function $\psi_n/\pi_n$ on~$[\colors]$ is just $a \mapsto \E_{\bu \sim \pi_{|a}}[\phi(\bu)]$.  Thus
\begin{equation}    \label{eqn:put-me-in}
    \Energy_{\pi_n}\bracks*{\sqrt{\psi_n/\pi_n}} = \frac12 \E_{\ba, \bb \sim \pi_n}\parens*{\sqrt{ \E_{\bu \sim \pi_{|\ba}}[\phi(\bu)]} - \sqrt{ \E_{\bv \sim \pi_{|\bb}}[\phi(\bv)]}}^2.
\end{equation}
We would prefer if the two inner expectations here were over the same probability space.  To that end, observe that for any $a,b \in [\colors]$ (not necessarily distinct), we can make a draw $\bv \sim \pi_{|b}$ in the following unusual way.  First, draw $\bu \sim \pi_{|a}$.  Next, draw $\bj \sim \bu^{-1}(b)$, where we have introduced the notation $u^{-1}(b)$ for the set of all coordinates~$j$ with $u_j = b$.  Finally, form $\bv = \bu^{(\bj\;n)}$.  (Here we are abusing  notation by allowing the possibility of $\bj = n$, so that the ``transposition'' $(\bj\;n)$ may be the identity.)  Thus we have
\[
    \E_{\bv \sim \pi_{|b}}[\phi(\bv)] = \E_{\substack{\bu \sim \pi_{|a} \\ \bj \sim \bu^{-1}(b)}}[\phi^{(\bj\;n)}(\bu)],
\]
where use the notation $\phi^{\tau}(u) \coloneqq \phi(u^{\tau})$.  Putting this into \Cref{eqn:put-me-in} (and also ``pointlessly'' choosing $\bj \sim \bu^{-1}(\bb)$ in the first inner expectation) we get
\begin{equation}    \label{eqn:weird2}
    \Energy_{\pi_n}\bracks*{\sqrt{\psi_n/\pi_n}} = \frac12 \E_{\ba, \bb \sim \pi_n}
    \abs*{\sqrt{ \E_{\substack{\bu \sim \pi_{|\ba} \\ \bj \sim \bu^{-1}(\bb)}}[\phi(\bu)]} - \sqrt{ \E_{\substack{\bu \sim \pi_{|\ba} \\ \bj \sim \bu^{-1}(\bb)}}[\phi^{(\bj\;n)}(\bu)]}}^2.
\end{equation}
Considering the quantity inside the outer expectation, we further use
\[
    \abs*{\sqrt{\E_{\substack{\bu \sim \pi_{|\ba} \\ \bj \sim \bu^{-1}(\bb)}}\bracks*{\phi(\bu)}} - \sqrt{\E_{\substack{\bu \sim \pi_{|\ba} \\ \bj \sim \bu^{-1}(\bb)}}\bracks*{\phi^{(\bj\;n)}(\bu)}}}
    = \abs*{ \norm*{\sqrt{\phi}}_2 - \norm*{\sqrt{\phi^{(\noarg\; n)}}}_2}
    \leq  \norm*{\sqrt{\phi} - \sqrt{\phi^{(\noarg\;n)}}}_2,
\]
where $\norm{\noarg}_2$ is the $2$-norm defined by the distribution on $(\bu, \bj)$, and we used the triangle inequality.  Putting this back into \Cref{eqn:weird2} yields
\begin{equation}    \label[ineq]{eqn:weird3}
    \Energy\bracks*{\sqrt{\mu_n/\pi_n}}
    \leq \frac12 \E_{\ba, \bb \sim \pi_n}\bracks*{\norm*{\sqrt{\phi} - \sqrt{\phi^{(\noarg\;n)}}}_2^2}
    = \E_{\ba, \bb \sim \pi_n}  \E_{\substack{\bu \sim \pi_{|\ba} \\ \bj \sim \bu^{-1}(\bb)}}
            \bracks*{e\parens*{\sqrt{\phi};\bu, \bu^{(\bj\;n)}}},
\end{equation}
where we have introduced the shorthand
\[
    e(f;u,v) = \tfrac12(f(u) - f(v))^2.
\]
In the right-hand expectation in \Cref{eqn:weird3}, $\bu$ is simply distributed according to~$\pi$.  Furthermore, suppose we fix any outcome $\bu = u$.  Let us consider the joint distribution of $\bb \sim \pi_n$ and $\bj \sim u^{-1}(\bb)$.  Since $u \in \slice{\hamlevel}$, we could equivalently form $\bb$ by choosing a random color within~$\bu$.  But in this case, $\bj$~is formed by first choosing a random color within~$u$, and then taking a random coordinate where $u$ has that same color. It is clear that the resulting distribution on~$\bj$ is simply uniformly random on~$[n]$.  Thus the right-hand side of \Cref{eqn:weird3} is simply
\[
    \E_{\bu \sim \pi} \E_{\bj \sim [n]}
            \bracks*{e\parens*{\sqrt{\phi};\bu, \bu^{(\bj\;n)}}}.
\]
Putting this into \Cref{eqn:weird3} and then into \Cref{eqn:get-started}, we conclude
\[
    \text{MARGINAL}_n \leq 2\cdot (\varrho^{\mathrm{triv}}_{\ul{\hamlevel}})^{-1} \cdot
    \E_{\bu \sim \pi} \E_{\bj \sim [n]}
            \bracks*{e\parens*{\sqrt{\phi};\bu, \bu^{(\bj\;n)}}}.
\]
This bound we have derived depends on the $n$th coordinate only through the transposition of~$\bj$ with~$n$.  If we repeat this derivation for a general coordinate $k \in [n]$, and then average over~$k$, we will get
\begin{equation}    \label[ineq]{eqn:marginal-bound}
    \avg_{\bk \sim [n]}\Bigl\{\text{bound on MARGINAL}_{\bk}\Bigr\} = 2\cdot \tfrac{n-1}{n} \cdot (\varrho^{\mathrm{triv}}_{\ul{\hamlevel}})^{-1} \cdot
    \E_{\substack{\bu \sim \pi \\ \mathclap{\btau \sim \transpos{n}}}}
            \bracks*{e\parens*{\sqrt{\phi};\bu, \bu^{\btau}}} = 2\cdot \tfrac{n-1}{n}  \cdot (\varrho^{\mathrm{triv}}_{\ul{\hamlevel}})^{-1} \cdot \Energy\bracks*{\sqrt{\phi}},
\end{equation}
where the $\frac{n-1}{n}$ factor accounts for the fact that when $\bj$ and $\bk$ are uniformly random, there is a $\frac{1}{n}$ chance that they are equal --- in which case the ``transposition'' $(\bj\;\bk)$  is the identity and we get a contribution of $e(\sqrt{\phi};\bu,\bu) = 0$.

\subsubsection{Bounding \texorpdfstring{CONDITIONAL${}_n$}{CONDITIONALn}}
To bound CONDITIONAL${}_n$, we again follow~\cite{LY98} to a certain point.  By definition of $\invLS{}$ we have
\[
    \E_{\ba \sim \psi_n}\bracks*{\KL{\psi_{|\ba}}{\pi_{|\ba}}} \leq \E_{\ba \sim \psi_n} \bracks*{2\invLS{\hamlevel \setminus \ba} \cdot \Energy_{\hamlevel \setminus \ba}\bracks*{\sqrt{\frac{\psi_{|\ba}}{\pi_{|\ba}}}}}.
\]
Note that  $\psi_{|\ba} = \psi(\noarg, \ba) / \psi_n(\ba)$,  similarly for~$\pi$, and also that $\Energy[c\cdot f] = c^2 \cdot \Energy[f]$.   Thus
\[
 \E_{\ba \sim \psi_n} \bracks*{2\invLS{\hamlevel \setminus \ba} \cdot \Energy_{\hamlevel \setminus \ba}\bracks*{\sqrt{\frac{\psi_{|\ba}}{\pi_{|\ba}}}} }
 = 2 \E_{\ba \sim \psi_n} \bracks*{\invLS{\hamlevel \setminus \ba} \cdot \frac{\pi_n(\ba)}{\psi_n(\ba)}\cdot \Energy_{\hamlevel \setminus \ba}\bracks*{\sqrt{\frac{\psi(\noarg, \ba)}{\pi(\noarg, \ba)}}}}
 = 2 \E_{\ba \sim \pi_n} \bracks*{\invLS{\hamlevel \setminus \ba} \cdot \Energy_{\hamlevel \setminus \ba}\bracks*{\sqrt{\frac{\psi(\noarg, \ba)}{\pi(\noarg, \ba)}}}}.
\]
We can further write
\[
    \Energy_{\hamlevel \setminus \ba}\bracks*{\sqrt{\frac{\psi(\noarg, \ba)}{\pi(\noarg, \ba)}}}
=  \Energy_{\hamlevel \setminus \ba}\bracks*{\sqrt{\phi(\noarg, \ba)}}
=  \E_{\substack{\ol{\bu} \sim \pi_{\hamlevel \setminus \ba} \\ \btau \sim \transpos{n-1}}}\bracks*{e\parens*{\sqrt{\phi}; (\ol{\bu}, \ba), (\ol{\bu}^{\btau}, \ba)}}.
\]
Combining all previous deductions, we get
\[
    \E_{\ba \sim \psi_n}\bracks*{\KL{\psi_{|\ba}}{\pi_{|\ba}}} \leq
2 \E_{\ba \sim \pi_n} \invLS{\hamlevel \setminus \ba} \cdot     \E_{\substack{\ol{\bu} \sim \pi_{\hamlevel \setminus \ba} \\ \btau \sim \transpos{n-1}}}\bracks*{e\parens*{\sqrt{\phi}; (\ol{\bu}, \ba), (\ol{\bu}^{\btau}, \ba)}} = 2\E_{\substack{\bu \sim \pi \\ \btau \sim \transpos{n-1}}}\bracks*{e\parens*{\sqrt{\phi}; \bu, \bu^{\btau}}\cdot \invLS{\hamlevel \setminus \bu_n}}.
\]
This is our desired bound for CONDITIONAL${}_n$, except we make a slight adjustment so that the expectation is over all $\btau$~in $\transpos{n}$, obtaining the equivalent bound
\[
    \text{CONDITIONAL}_n \leq 2\E_{\substack{\bu \sim \pi \\ \btau \sim \transpos{n}}}\bracks*{e\parens*{\sqrt{\phi}; \bu, \bu^{\btau}}\cdot \tfrac{\ind[n \text{ is fixed by } \btau]}{1-2/n} \cdot \invLS{\hamlevel \setminus \bu_n}}.
\]
Again, had we repeated this derivation for an arbitrary coordinate~$k$ in place of the $n$th, and then averaged over~$k$, we would get
\begin{equation}    \label[ineq]{eqn:fika1}
    \avg_{\bk \sim [n]}\Bigl\{\text{bound on CONDITIONAL}_{\bk}\Bigr\} =
    2\E_{\substack{\bu \sim \pi \\ \btau \sim \transpos{n}}}\bracks*{e\parens*{\sqrt{\phi}; \bu, \bu^{\btau}}\cdot \E_{\bk \sim \text{Fix}(\btau)} \invLS{\hamlevel \setminus \bu_{\bk}}},
\end{equation}
where $\text{Fix}(\btau)$ denotes the fixed points of transposition~$\btau$.

This is the point at which, by necessity, we depart from~\cite{LY98}.  To proceed, we simply take a worst-case upper bound on the two colors swapped by~$\btau$; no matter what $\bu$ and $\btau$ are, we have
\[
    \E_{\bk \sim \text{Fix}(\btau)} \invLS{\hamlevel \setminus \bu_{\bk}} \leq
    \max_{i_1,i_2 \in [\colors]} \braces*{\E_{\bi \sim \hamlevel \setminus \{i_1,i_2\}} \invLS{\hamlevel \setminus \bi}}.
\]
In fact, when inserting this into \Cref{eqn:fika1}, we can do slightly better.  Notice that if $\btau$ swaps two colors of $\bu$ that are the same, then $e\parens*{\sqrt{\phi}; \bu, \bu^{\btau}} = 0$ anyway.  Thus we may insert the indicator random variable $\ind[\text{$\btau$ swaps distinct colors in~$\bu$}]$ into the expectation in \Cref{eqn:fika1}, and then use
\[
    \ind[\text{$\btau$ swaps distinct colors in~$\bu$}] \cdot \E_{\bk \sim \text{Fix}(\btau)} \invLS{\hamlevel \setminus \bu_{\bk}} \leq
    \max_{\substack{i_1,i_2 \in [\colors]\\ \text{distinct}}} \braces*{\E_{\bi \sim \hamlevel \setminus \{i_1,i_2\}} \invLS{\hamlevel \setminus \bi}}.
\]
Putting this inequality into \Cref{eqn:fika1} yields
\begin{equation}    \label[ineq]{eqn:conditional-bound}
    \avg_{\bk \sim [n]}\Bigl\{\text{bound on CONDITIONAL}_{\bk}\Bigr\} \leq 2
    \max_{\substack{i_1,i_2 \in [\colors]\\ \text{distinct}}} \braces*{\E_{\bi \sim \hamlevel \setminus \{i_1,i_2\}} \invLS{\hamlevel \setminus \bi}} \cdot  \Energy\bracks*{\sqrt{\phi}}.
\end{equation}

\subsubsection{Completing the proof of \texorpdfstring{\Cref{lem:induction-structure}}{Lemma \ref{lem:induction-structure}}}
Putting together \Cref{eqn:fika0,eqn:marginal-bound,eqn:conditional-bound} yields
\[
    \KL{\phi \pi}{\pi} \leq  2 \parens*{\tfrac{n-1}{n} \cdot (\varrho^{\mathrm{triv}}_{\ul{\hamlevel}})^{-1} + \max_{\substack{i_1,i_2 \in [\colors]\\ \text{distinct}}} \braces*{\E_{\bi \sim \hamlevel \setminus \{i_1,i_2\}} \invLS{\hamlevel \setminus \bi}}} \cdot  \Energy\bracks*{\sqrt{\phi}},
\]
which immediately implies \Cref{lem:induction-structure}.

\subsection{Deducing \texorpdfstring{\Cref{thm:main}}{Theorem \ref{thm:main}} from \texorpdfstring{\Cref{lem:induction-structure}}{Lemma \ref{lem:induction-structure}}} \label{sec:finish-recursion}
In this section we upper-bound $\invLS{\hamlevel}$ using the recursion given by \Cref{lem:induction-structure}.  Let us make a few simplifications to the statement of \Cref{lem:induction-structure}.  First, let us drop the factor~$\frac{n-1}{n}$ for simplicity.  Second, let us use the upper bound on $(\varrho^{\mathrm{triv}}_{\ul{\hamlevel}})^{-1}$ from \Cref{lem:triv-bound}.  Finally, let us drop the condition that $i_1, i_2$ be distinct in the~$\max$; this condition greatly simplifies the recursion when $\colors = 2$ (as in~\cite{LY98}), but doesn't help us much when $\colors > 2$.  Thus we will finally use
\begin{equation}    \label[ineq]{eqn:recursion3}
    \invLS{\hamlevel} \leq
         \cost(\hamlevel) + \max_{\{i_1,i_2\} \subseteq \hamlevel} \braces*{\E_{\bi \sim \hamlevel \setminus \{i_1,i_2\}} \invLS{\hamlevel \setminus \bi}}, \quad \cost(\hamlevel) \coloneqq \frac12 \sum_{i\colon \hamlevel_i > 0} \lg\parens*{\frac{|\hamlevel|}{\hamlevel_i}} \quad \text{for $|\hamlevel| > 2$.}
\end{equation}
We remind the reader that in the above, $\{i_1,i_2\}$ and $\hamlevel$ are considered to be multisets of~$[\colors]$.  In fact, we will always consider $\hamlevel$ to merely be a multiset of the colors on which it is supported.  That is to say, whenever some $\hamlevel_i$ becomes~$0$ in the above recursion (through the removal of a color inside the expectation), we will treat the color~$i$ as no longer existing (rather than allowing $\hamlevel_i= 0$).  This is acceptable, since the definition of $\invLS{\hamlevel}$ is not affected by removing colors that don't appear in~$\hamlevel$.  This is why we dropped the hypothesis $\hamlevel \in \N_+^\colors$ in writing \Cref{eqn:recursion3}, and why we wrote the sum in $\cost(\hamlevel)$ as being over $\{i : \hamlevel_i > 0\}$, rather than over all $i \in [\colors]$. It might seem that dropping the hypothesis $\hamlevel \in \N_+^\colors$ (and hence $\hamlevel_i > 0$ for all~$i$) in \Cref{eqn:recursion3} could cause a problem for the case when the number of colors drops to just one, meaning  $\hamlevel = \{i, i, \dots, i\}$ for some~$i$.  However, in this degenerate case, the correct value of $\invLS{\hamlevel}$ is~$0$, and we also have~$\cost(\hamlevel) = 0$.

Regarding the base cases of $|\hamlevel| = 2$ for our recursion \Cref{eqn:recursion3}, we have the true values
\begin{equation}    \label[ineq]{eqn:base-case}
    \invLS{\{a,b\}} = \braces*{\begin{array}{lr}
                                        0 & \text{if $a = b$} \\
                                        1/2 & \text{if $a \neq b$}
                                \end{array}} \leq 1/2.
\end{equation}
Indeed, $\varrho^{\mathrm{triv}}_{\underline{\{1,2\}}} = 1$ according to \Cref{eqn:base-log-sob}, and the energy of this trivial chain is half that of the transposition chain on $\slice{\{1,2\}}$.

\paragraph{``Strategies''.}
\newcommand{\Strat}{\mathrm{Strat}}
Given $\hamlevel$, let's define a \emph{strategy for $\hamlevel$} to be a mapping~$p$ that takes in an arbitrary nonempty $\mu \subseteq \hamlevel$ and outputs a pair $\{i_1,i_2\} \subseteq \mu$.  We'll say that $p(\mu) = \{i_1,i_2\}$ is the pair \emph{protected} by~$p$.  We'll write $\Strat(\hamlevel)$ for the set of all strategies for~$\hamlevel$.  Then from \Cref{eqn:recursion3} we have that
\begin{equation}    \label[ineq]{eqn:R-vs-S}
    \invLS{\hamlevel} \leq \max_{p \in \Strat(\hamlevel)} S_p(\hamlevel),
\end{equation}
where $S_p(\hamlevel)$ is defined to be the solution of the recursion
\begin{equation}    \label{eqn:strategy1}
     S_p(\hamlevel) =
         \cost(\hamlevel) + \E_{\bi \sim \hamlevel \setminus p(\hamlevel)} S_p(\hamlevel \setminus \bi),
\end{equation}
with the base case $S_p(\text{pair}) = 1/2$ from \Cref{eqn:base-case}.  Let us write
\[
    \cost(\hamlevel) = \sum_{a \in [\colors]} \cost^{(a)}(\hamlevel), \quad \text{where } \cost^{(a)}(\hamlevel) \coloneqq \ind[\hamlevel_a \neq 0] \cdot \frac12 \lg\parens*{\frac{|\hamlevel|}{\hamlevel_a}}.
\]
Then it follows from  \Cref{eqn:strategy1} that we have
\[
    S_p(\hamlevel) = \sum_{a =1}^{\colors} S_p^{(a)},
\]
where
\begin{equation}    \label{eqn:strategy-breakup}
     S_p^{(a)}(\hamlevel) =
         \cost^{(a)}(\hamlevel) + \E_{\bi \sim \hamlevel \setminus p(\hamlevel)} S^{(a)}_p(\hamlevel \setminus \bi).
\end{equation}
Thus returning to \Cref{eqn:R-vs-S}, we have
\begin{equation}    \label[ineq]{eqn:bound-recursion1}
    \invLS{\hamlevel} \leq \max_{p \in \Strat(\hamlevel)} \braces*{\sum_{a =1}^{\colors} S_p^{(a)}(\hamlevel)} \leq \sum_{a=1}^\colors \max_{p \in \Strat(\hamlevel)} S_p^{(a)}(\hamlevel).
\end{equation}
Next, it will be convenient if $\cost^{(a)}(\hamlevel)$ is a decreasing function of $\hamlevel_a$ (considering $|\hamlevel|$ fixed), \emph{even for $\hamlevel_a = 0$}.  So let us
\begin{equation}    \label{eqn:newcost}
    \text{redefine }  \cost^{(a)}(\hamlevel) \coloneqq \frac12 \lg\parens*{\frac{2|\hamlevel|}{1+\hamlevel_a}}.
\end{equation}
This redefinition only increases $\cost^{(a)}(\hamlevel)$; it increases it from zero to a nonzero value when $\hamlevel_a = 0$; and, when $\hamlevel_a > 0$, the increase from $\hamlevel_a$ to $\hamlevel_a + 1$ in the denominator is at most a factor of~$2$, and this is compensated for by the new factor of~$2$ in the numerator. Thus \Cref{eqn:bound-recursion1} is still valid under our redefinition.

The advantage of the redefinition is, as mentioned, that $\cost^{(a)}(\hamlevel)$ always goes up when $\hamlevel_a$ drops by one, even when dropping from~$1$ down to~$0$. Thus for each fixed $a \in [\colors]$, it is now clear from from \Cref{eqn:strategy-breakup} that \emph{the optimal strategies~$p \in \Strat(\hamlevel)$ for maximizing $S_p^{(a)}(\hamlevel)$ are precisely the ``greedy'' ones}.  Here the ``greedy'' strategies for color~$a$ mean the ones that ``always protect non-$a$ colors'' (to the extent this is possible --- if $\mu$ has only $m < 2$ non-$a$ colors then $p(\mu)$ will be obliged to contain $2-m$ $a$'s).  It is also not hard to see that every such greedy strategy~$g$ is equally effective; for the purposes of computing $S_g^{(a)}(\hamlevel)$, it doesn't matter what non-$a$ colors appear in the $\mu \subseteq \hamlevel$ that arise ---  only how many of them there are. (This observation relies in part on using the same upper bound, $1/2$, for both cases in \Cref{eqn:base-case}.)

\paragraph{Greedy strategies.} Let $S_g^{(a)}(\hamlevel)$ denote the solution for a greedy protection strategy, which, as we have argued, equals $\max_{p \in \Strat(\hamlevel)} \{S_p^{(a)}(\hamlevel)\}$. Our final goal will be to establish
\begin{equation} \label[ineq]{eqn:final-goal}
    S_g^{(a)}(\hamlevel) \leq \frac12 n \lg \frac{4n}{\hamlevel_a}.
\end{equation}
Putting this bound into \Cref{eqn:bound-recursion1} will yield \Cref{thm:main}.

We first dispense with the edge case when $\hamlevel$ contains fewer than~$2$ non-$a$'s.  In this case, under the greedy strategy  $\bi$ is always~$a$ in \Cref{eqn:strategy-breakup}, and so ``solving the recursion'' just amounts to computing a sum. When $\hamlevel_a = |\hamlevel|$, the result is
\[
 S_g^{(a)} = \frac{1}{2} \lg \frac{2n}{1+n} + \cdots + \frac{1}{2} \lg \frac{2\cdot 3}{1+3} + \frac{1}{2},
\]
and when $\hamlevel_a = |\hamlevel|-1$, the result is
\[
 S_g^{(a)} = \frac{1}{2} \lg \frac{2n}{n} + \cdots + \frac{1}{2} \lg \frac{2\cdot 3}{3} + \frac{1}{2}.
\]
In both cases, each of the $n-1$ summands is at most $\frac{1}{2} \lg 2$, from which~\Cref{eqn:final-goal} immediately follows.

We now come to the main case, when $\hamlevel$ contains at least~$2$ non-$a$'s.  In this case, the greedy strategy involves always protecting two non-$a$'s, and as argued, the quantity $S_g^{(a)}(\hamlevel)$ only depends on~$\hamlevel_a$.  Thus for analysis purposes, we may henceforth assume $\colors = 2$ and $a = 1$.  Now from \Cref{eqn:bound-recursion1} and \Cref{eqn:newcost} we conclude that $S_g^{(a)}(\hamlevel)$ is   the solution of
\[
      S_g^{(1)}(\hamlevel) =
         \frac12 \lg\parens*{\frac{2|\hamlevel|}{1+\hamlevel_1}} + \frac{\hamlevel_1}{\hamlevel_1 + \hamlevel_2 - 2} S_g^{(1)}(\hamlevel \setminus 1) + \frac{\hamlevel_2 - 2}{\hamlevel_1 + \hamlevel_2 - 2} S_g^{(1)}(\hamlevel \setminus 2),
\]
with base case $S_g^{(1)}(\{2,2\}) = 1/2$.  It is easier to analyze this recursion in terms of $\lambda \coloneqq \hamlevel \setminus \{2,2\}$. Writing $G(\lambda) = S_g^{(1)}(\hamlevel)$, we have
\begin{equation}    \label{eqn:solvable}
    G(\lambda) = \frac12 \lg\parens*{\frac{2(\lambda_1 + \lambda_2 + 2)}{1+\lambda_1}}  + \frac{\lambda_1}{\lambda_1 + \lambda_2}G(\lambda \setminus 1) + \frac{\lambda_2}{\lambda_1+\lambda_2} G(\lambda \setminus 2),
\end{equation}
with base case $G(\emptyset) = 1/2$.  In fact, it will be convenient to overpay for the base case, taking $G(\emptyset) = \frac12 \lg\parens*{\frac{2(0 + 0 + 2)}{1+0}} = 1$.

Finally, we can solve the recursion in \Cref{eqn:solvable} by giving it a probabilistic interpretation.  Suppose we choose a random string $\bu$ in $\slice{\lambda}$.  Then we begin a deterministic process with ``stages'' numbered $|\lambda|, |\lambda|-1, |\lambda| - 2, \dots, 1, 0$.  In each stage, we ``pay''  $\frac12 \lg\parens*{\frac{2(\hamwt{1}{\bu} + \hamwt{2}{\bu} + 2)}{1+\hamwt{1}{\bu}}}$, and then we delete the last character in~$\bu$.  It is easy to see that the solution of \Cref{eqn:solvable} is equal to the expectation (over the initial choice of~$\bu$) of the total payment in this process.  By linearity of expectation, this total payment is the sum of the expected payment in each stage, and in the $m$th stage it is clear that the random variable $\hamwt{1}{\bu}$ is distributed as Hypergeometric($\lambda_1 + \lambda_2, \lambda_1, m$).  Thus the expected payment in the $m$th stage is
\[
    \E\bracks*{\frac12 \lg\parens*{\frac{2(m + 2)}{1+\bX}}} \leq \frac12 \lg \parens*{\E\bracks*{\frac{2(m+2)}{1+\bX}}}, \quad \text{for }  \bX \sim \text{Hypergeometric}(\lambda_1 + \lambda_2, \lambda_1, m).
\]
By \Cref{lem:inv-hypergeom}, this is at most
\[
    \frac12 \lg \parens*{\frac{2(m+2)(\lambda_1 + \lambda_2 + 1)}{(\lambda_1+1)(m+1)}} \leq  1 + \frac12 \lg \frac{\lambda_1 + \lambda_2 + 1}{\lambda_1+1}.
\]
(The bound here is a little loose, but we valued simplicity over optimization of lower-order terms.)
When this is summed over $0 \leq m \leq |\lambda|$, we get an upper bound of
\[
    |\lambda| + 1 + \frac{|\lambda|}{2}  \lg \frac{|\lambda| + 1}{\lambda_1+1}.
\]
Recalling $\lambda = \hamlevel \setminus \{2,2\}$, and using $|\hamlevel| - 1 = n-1 \leq n$, the above upper bound is at most
\[
    n + \frac12 n \lg \frac{n}{\hamlevel_1} = \frac12 n \lg \frac{4n}{\hamlevel_1},
\]
confirming \Cref{eqn:final-goal}.

\section{Applications}  \label{sec:applications}
\subsection{KKL and Kruskal--Katona for multislices}
By viewing the multislice as a Schreier graph, we can apply the results of~\cite{OW13a,OW13} to obtain the KKL Theorem in this setting (in fact, Talagrand's strengthening~\cite{Tal94} of it):
\begin{theorem} \label{thm:tal}
    Let $f\colon \slice\hamlevel \to \{0,1\}$. Then
    \[
        \avg_{\tau\in\transpos{n}} \braces*{\frac{\Inf_{\tau}[f]}{\lg(2/\Inf_{\tau}[f])}} \gtrsim \rho_{\hamlevel} \cdot \Var_{\pi_\hamlevel}[f].
    \]
\end{theorem}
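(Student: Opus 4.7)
The plan is to invoke \Cref{thm:main} through its corollary \Cref{thm:hypercontractivity} on hypercontractivity, and then run the standard Talagrand argument~\cite{Tal94} in its abstract ``Schreier-graph'' form as developed in~\cite{OW13a,OW13}. The multislice fits this framework: the random-transposition chain is the action of the generating set $\transpos{n}$ of $\symm{n}$ on $\slice{\hamlevel}$, and the relevant log-Sobolev constant is $\varrho_{\hamlevel}$.

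Concretely, take $f\colon \slice{\hamlevel} \to \{0,1\}$ and set $g = f - \E_\pi[f]$, so $\Var_\pi[f] = \|g\|_2^2$. For each $\tau \in \transpos{n}$ I would use two structural observations. First, $\Lap$ is invariant under the $\symm{n}$-action on $M^{\hamlevel}$ (since averaging over a random transposition commutes with conjugation), hence $\Heat_s = e^{-s\Lap}$ commutes with each individual $\Lap_\tau$, giving
\[
\Inf_\tau[\Heat_s f] = \tfrac12 \|\Lap_\tau \Heat_s f\|_2^2 = \tfrac12 \|\Heat_s \Lap_\tau f\|_2^2.
\]
Second, $\Lap_\tau f \in \{-1,0,1\}$ because $f$ is Boolean-valued, so $\|\Lap_\tau f\|_p^p = 2\Inf_\tau[f]$ for every $p \geq 1$. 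Combining this with the dual form $\|\Heat_s h\|_2 \leq \|h\|_{q'}$ of \Cref{thm:hypercontractivity}, valid for $s = \tfrac{\ln(q-1)}{2\varrho_{\hamlevel}}$ and $q \geq 2$, applied to $h = \Lap_\tau f$, yields the ``small-influence-gets-smaller'' bound
\[
\Inf_\tau[\Heat_s f] \;\leq\; \tfrac12 \bigl(2\Inf_\tau[f]\bigr)^{2-2/q}.
\]
I also have the trivial contraction bound $\Inf_\tau[\Heat_s f] \leq \Inf_\tau[f]$.

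To assemble these into a bound on $\Var[f]$, I would use the identity $\Var[f] = 2\int_0^\infty \Energy[\Heat_s f]\,ds$ (which follows from $\tfrac{d}{ds}\|\Heat_s g\|_2^2 = -2\Energy[\Heat_s g]$), write $\Energy[\Heat_s f] = \avg_\tau \Inf_\tau[\Heat_s f]$, and bound each per-$\tau$ contribution by splitting at a threshold $s^\star(\tau)$ chosen so the trivial bound and the hypercontractive bound match. The natural choice is $q^\star \asymp \log(2/\Inf_\tau[f])$, i.e.\ $s^\star \asymp \varrho_{\hamlevel}^{-1}\log\log(2/\Inf_\tau[f])$; at this crossover the two regimes agree up to constants, and integrating each regime (using $L^2$-decay of $\Heat_s g$ from Poincar\'e for the tail) produces a per-$\tau$ contribution of order $\varrho_{\hamlevel}^{-1} \Inf_\tau[f]/\log(2/\Inf_\tau[f])$. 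Averaging over $\tau$ and rearranging then gives the claim.

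The main obstacle is the final calculus step: optimally balancing the trivial regime against the hypercontractive regime, and controlling the long-time tail, so that exactly a $\log(2/\Inf_\tau[f])$ denominator emerges with the correct $\varrho_{\hamlevel}$ dependence. The $\symm{n}$-equivariance of $\Heat_s$ is what permits the reduction to single transpositions $\Lap_\tau$; once that commutation and the Boolean-valued rigidity $\Lap_\tau f \in \{-1,0,1\}$ are in hand, the remaining analysis is formal and mirrors the cube case treated in~\cite{Tal94,OW13a,OW13}.
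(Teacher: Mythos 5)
The paper does not actually give a self-contained proof of \Cref{thm:tal}: it is obtained by citing the Schreier-graph Talagrand inequality of~\cite{OW13a,OW13} and noting that the multislice with generating set $\transpos{n}$ fits that framework. Your sketch attempts to reconstruct that argument, and the structural ingredients you isolate are indeed the correct ones: $\Lap$ and hence $\Heat_s=e^{-s\Lap}$ are $\symm{n}$-equivariant because $\transpos{n}$ is a conjugacy class, so $\Heat_s$ commutes with each $\Lap_\tau=\bbone-\tau^*$, giving $\Inf_\tau[\Heat_s f]=\tfrac12\|\Heat_s\Lap_\tau f\|_2^2$; the Booleanity of $f$ gives $\Lap_\tau f\in\{-1,0,1\}$ and hence $\|\Lap_\tau f\|_{q'}^{q'}=2\Inf_\tau[f]$; \Cref{thm:hypercontractivity} then yields $\Inf_\tau[\Heat_s f]\le\tfrac12(2\Inf_\tau[f])^{2-2/q}$ at $s=\tfrac{\ln(q-1)}{2\varrho_\hamlevel}$; and $\Var[f]=2\avg_\tau\int_0^\infty\Inf_\tau[\Heat_s f]\,ds$.

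However, the ``final calculus step'' that you flagged as the main obstacle is where the sketch actually breaks. Writing $\alpha=\Inf_\tau[f]$: with your proposed threshold $q^\star$ of order $\log(2/\alpha)$, the small-$s$ contribution bounded by the trivial contraction is $s^\star\alpha=\tfrac{\ln(q^\star-1)}{2\varrho_\hamlevel}\alpha$, which is of order $\varrho_\hamlevel^{-1}\,\alpha\log\log(2/\alpha)$; this \emph{exceeds} $\varrho_\hamlevel^{-1}\alpha$, let alone the target $\varrho_\hamlevel^{-1}\alpha/\lg(2/\alpha)$. Even with $q^\star$ a fixed constant, the bound $\Inf_\tau[\Heat_s f]\le\alpha$ on $[0,s^\star]$ is lossy by a full factor of $\log(2/\alpha)$: for a tribes-type function on $\slice\hamlevel$ (all influences of order $(\log n)/n$, $\Var[f]=\Theta(1)$), the true per-$\tau$ integral is $\Theta(1)$ while $s^\star\alpha=\Theta(\log n)$. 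The fix is to \emph{not} treat $\Inf_\tau[\Heat_s f]$ as flat on $[0,s^\star]$, but to integrate the $s$-dependent hypercontractive bound: substituting $v=2-2/q(s)$ with $q(s)=1+e^{2\varrho_\hamlevel s}$ gives
\[
\int_0^{s^\star}\Inf_\tau[\Heat_s f]\,ds \;\le\; \frac{1}{2\varrho_\hamlevel}\int_1^{v^\star}\frac{(2\alpha)^v}{v(2-v)}\,dv \;\lesssim\; \frac{1}{\varrho_\hamlevel}\cdot\frac{\alpha}{\ln(1/(2\alpha))}
\]
already for a constant $v^\star$ (say $v^\star=3/2$, i.e.\ $q^\star=4$), and the tail $\int_{s^\star}^\infty$, controlled by the hypercontractive estimate at $s^\star$ together with the spectral-gap decay and $\varrho_\hamlevel\le\lambda_1$, is of order $\varrho_\hamlevel^{-1}(2\alpha)^{3/2}$, which is dominated since $(2\alpha)^{1/2}\lg(2/\alpha)=O(1)$. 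Equivalently, and closer to what~\cite{OW13a,OW13} actually do, one can expand $\Lap_\tau f$ in the isotypic decomposition, split at a degree threshold, and apply hypercontractivity (via \Cref{lem:Ht-bound}) to the low-degree projection; either route repairs the estimate, but the crude contraction plus $q^\star\approx\log(2/\alpha)$ threshold as sketched does not.
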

(Since we have written $\gtrsim$, hiding a universal constant, it doesn't matter if we take $f$'s range to be $\{0,1\}$ or $\{-1,1\}$.)   Substituting our lower bound on $\rho_{\hamlevel}$ from \Cref{thm:main} yields concrete new results.  For example, consider our model scenario of  $n \longrightarrow \infty$ with $\colors = O(1)$ and $\hamlevel_i/n \geq \Omega(1)$ for each~$i$; suppose further that $f$ is ``roughly balanced'', meaning $\Omega(1) \leq \Var[f] \leq 1-\Omega(1)$.  Then
\[
    \avg_{\tau\in\transpos{n}} \braces*{\frac{\Inf_{\tau}[f]}{\lg(2/\Inf_{\tau}[f])}} \gtrsim \frac{1}{n}, \quad\text{and hence}\quad \calM[f] \gtrsim \frac{\log n}{n}.
\]
The latter statement here is the traditional conclusion of the KKL Theorem.

Let us record here one more concrete corollary of \Cref{thm:tal}.  In our model scenario, that theorem (roughly speaking) says that the energy $\Energy[1_A] = \avg_{\tau\in\transpos{n}} \Inf_\tau[1_A]$ is at least $\Omega\bigl(\frac{\log n}{n}\bigr)$ unless some transposition $(i\;j)$ has a rather large influence, like $1/n^{.01}$, on~$1_A$.
\begin{corollary}                                       \label{cor:kkl}
    Let $A \subseteq \slice\hamlevel$.  Assume $\kappa_i \geq pn$ for all $i \in [\colors]$ and that $\eps \leq \vol(A) \leq 1-\eps$.   Then
    \[
        \Energy[1_A] \geq \Omega\parens*{\frac{\eps}{\ell \log(1/p)}} \cdot \frac{\log(1/\calM[1_A])}{n}.
    \]
\end{corollary}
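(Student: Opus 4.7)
The plan is to derive Corollary \ref{cor:kkl} as an essentially mechanical consequence of Theorem \ref{thm:tal} (Talagrand's refined KKL) combined with the log-Sobolev lower bound from Theorem \ref{thm:main}. The target inequality is just a rearrangement of the Talagrand-style statement so that $\calM[1_A]$ appears in an explicit logarithmic factor and $\rho_\hamlevel$ is replaced by its concrete estimate.

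First I would observe that the function $h(x) = x / \lg(2/x)$ is monotonically increasing on $(0,1]$. Since $\Inf_\tau[1_A] \leq \calM[1_A]$ for every transposition $\tau$, this gives the pointwise bound $\lg(2/\Inf_\tau[1_A]) \geq \lg(2/\calM[1_A])$, and hence
\[
\avg_{\tau\in\transpos{n}} \frac{\Inf_\tau[1_A]}{\lg(2/\Inf_\tau[1_A])} \;\leq\; \frac{1}{\lg(2/\calM[1_A])} \cdot \avg_{\tau\in\transpos{n}} \Inf_\tau[1_A] \;=\; \frac{\Energy[1_A]}{\lg(2/\calM[1_A])}.
\]
Combining this with Theorem \ref{thm:tal} applied to $f = 1_A$ gives $\Energy[1_A] \gtrsim \rho_\hamlevel \cdot \Var_\pi[1_A] \cdot \lg(2/\calM[1_A])$.

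Next I would plug in the two quantitative inputs. From $\eps \leq \vol(A) \leq 1-\eps$ (which forces $\eps \leq 1/2$) we have $\Var_\pi[1_A] = \vol(A)(1-\vol(A)) \geq \eps(1-\eps) \gtrsim \eps$. From Theorem \ref{thm:main} together with the hypothesis $\kappa_i \geq pn$, each term $\tfrac12 \lg(4n/\kappa_i) \leq \tfrac12 \lg(4/p) \lesssim \log(1/p)$ (absorbing constants into $\gtrsim$), so
\[
\rho_\hamlevel \;\gtrsim\; \frac{1}{n \sum_{i=1}^\colors \lg(4n/\kappa_i)} \;\gtrsim\; \frac{1}{n\, \ell\, \log(1/p)}.
\]
Substituting both bounds yields
\[
\Energy[1_A] \;\gtrsim\; \frac{\eps}{\ell\, \log(1/p)} \cdot \frac{\lg(2/\calM[1_A])}{n}.
\]

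The only cosmetic issue is replacing $\lg(2/\calM[1_A])$ by $\log(1/\calM[1_A])$ as in the statement. If $\calM[1_A] \leq 1/2$, then $\lg(2/\calM[1_A]) = 1 + \lg(1/\calM[1_A]) \gtrsim \log(1/\calM[1_A])$ and we are done. Otherwise $\calM[1_A] > 1/2$, in which case $\log(1/\calM[1_A]) = O(1)$, and the claim reduces to a Poincar\'e-type bound $\Energy[1_A] \gtrsim \rho_\hamlevel \Var[1_A]$, which holds since $\rho_\hamlevel \leq \lambda_1$ and Theorem \ref{thm:tal} already implies this (or directly from the Poincar\'e inequality recalled in \Cref{sec:prelims}). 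Neither step is a real obstacle; the whole derivation is routine once the monotonicity trick and the explicit bound on $\rho_\hamlevel$ are in hand.
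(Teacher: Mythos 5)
Your argument is correct and is essentially the paper's own proof, just written out in full: the paper's one-line proof cites exactly the same three ingredients ($\lg(2/\Inf_\tau[1_A]) \geq \lg(2/\calM[1_A])$, $\rho_\hamlevel^{-1} \lesssim n\,\ell\,\log(1/p)$ via \Cref{thm:main}, and $\Var[1_A] \geq \eps/2$), and your closing paragraph reconciling $\lg(2/\calM)$ with $\log(1/\calM)$ is a legitimate bit of bookkeeping that the paper silently absorbs into the $\Omega(\cdot)$. The initial remark that $x \mapsto x/\lg(2/x)$ is increasing is true but unused --- the display that follows relies only on $\lg(2/\cdot)$ being decreasing --- so it can be dropped.
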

\begin{proof}
    This is immediate from \Cref{thm:tal} and \Cref{thm:main}, using $\lg(2/\Inf_\tau[1_A]) \geq \lg(2/\calM[1_A])$,  $\rho_\hamlevel^{-1} \lesssim n \cdot \colors \cdot \log(1/p)$, and $\Var[1_A] = \vol(A)(1-\vol(A)) \geq \eps/2$.
\end{proof}
Following~\cite{OW13a}, we will use this to show a variant of the Kruskal--Katona Theorem for multislices.\footnote{Our variants are unrelated to those of Clements~\cite{Cle84,Cle94,Cle98}.}

The classical Kruskal--Katona Theorem~\cite{Sch59,Kru63,Kat68} concerns subsets of Hamming slices of the Boolean cube.  To recall it, let us write a $2$-color histogram $\hamlevel \in \N_+^2$ as $(\hamlevel_0, \hamlevel_1)$, with $n = \hamlevel_0 + \hamlevel_1$. If $A \subseteq \slice\hamlevel$, then the \emph{(lower) shadow} of~$A$ is defined to be
\[
    \bdry A = \braces*{v \in \slice{(\hamlevel_0+1,\hamlevel_1-1)} : v \leq u \text{ for some } u \in A}.
\]
It is not hard to show that $\vol(\bdry A) \geq \vol(A)$ always (here the fractional volume $\vol(\bdry A)$ is vis-\`a-vis the containing slice $\slice{(\hamlevel_0+1,\hamlevel_1-1)}$).  The Kruskal--Katona Theorem improves this by giving an exactly sharp lower bound on $\vol(\bdry A)$ as a function of~$\vol(A)$.  The precise function is somewhat cumbersome to state, but the qualitative consequence, assuming that $\vol(A)$ and $\hamlevel_0/n$ are bounded away from $0$~and~$1$, is that $\vol(\bdry A) \geq \vol(A) + \Omega(1/n)$.  This is sharp, up to the constant in the $\Omega(\cdot)$, as witnessed by the ``dictator set'' $A = \{u : u_1 = 0\}$.  See~\cite[Sec.~1.2]{OW13a} for more discussion.\\

To extend the Kruskal--Katona Theorem to multislices, we first need to extend the notion of neighboring slices and shadows. Fix an ordering on the colors, $1 \prec 2 \prec \cdots \prec \colors$. This total order extends to a partial order on strings in $[\colors]^n$ in the natural way.
\begin{definition}  \label{def:shadows}
Let $\hamlevel \in \N_+^\colors$ be a histogram. We say that histogram $\hamlevel'$ is a \emph{lower neighbor} of $\hamlevel$, and write $\hamlevel' \triangleleft \hamlevel$, if there exists some $c \prec d \in [\colors]$ such that $\hamlevel_c' = \hamlevel^{\vphantom{'}}_d + 1$, $\hamlevel'_d = \hamlevel^{\vphantom{'}}_c - 1$, and $\hamlevel'_i = \hamlevel^{\vphantom{'}}_i$ for all other colors~$i$.  In the opposite case, when $c \succ d$, we say $\hamlevel'$ is an \emph{upper neighbor} of~$\hamlevel$, and write $\hamlevel' \triangleright \hamlevel$.
\end{definition}

The main difference between the Boolean case and the multicolored case is that each multislice now has multiple upper and lower neighbors.
\begin{definition}
Let $A \subseteq \slice\hamlevel$, and let $\hamlevel' \triangleleft \hamlevel$. The \emph{lower shadow of $A$ at $\hamlevel'$} is
\[
    \bdry_{\hamlevel'}A = \{u \in \slice{\hamlevel'} : u \prec v \text{ for some } v \in A\}.
\]
We similarly define upper shadows.  We may use the same notation $\bdry_{\hamlevel'} A$ for both kinds of shadows, since whether a shadow is upper or lower is determined by whether $\hamlevel' \triangleright \hamlevel$ or $\hamlevel' \triangleleft \hamlevel$.
\end{definition}

Towards proving Kruskal--Katona theorems, we relate the volume of $A$'s lower shadows to~$\Energy[1_A]$.  Recalling that $A$ now has multiple lower shadows, we show that a certain weighted average of their volumes is noticeably larger than the volume of~$A$.  We first define the appropriate weighted average.
\begin{definition}
    Given a histogram $\hamlevel \in \N_+^\colors$, we define a natural probability distribution $\lwr(\hamlevel)$ on the lower neighbors of~$\hamlevel$ as follows.  To draw $\hamlevel' \sim \lwr(\hamlevel)$: take an arbitrary $u \in \slice\hamlevel$; choose $\bj, \bj' \sim [n]$ independently and randomly, conditioned on $u_{\bj} \neq u_{\bj'}$; let $\bc, \bd$ denote the two colors $u_{\bj}, u_{\bj'}$, with the convention $\bc \prec \bd$; finally, let $\hamlevel'$ be the lower neighbor of $\hamlevel$ with $\hamlevel'_{\bc} = \hamlevel_{\bc} + 1$ and $\hamlevel'_{\bd} = \hamlevel_{\bd} - 1$.

    We similarly define a probability distribution $\upr(\hamlevel)$ on the upper neighbors of $\hamlevel$ by interchanging the roles of $\bc$ and~$\bd$.
\end{definition}

\begin{proposition}                                     \label{prop:energy-vs-shadow}
    Given a histogram $\hamlevel \in \N_+^\colors$, let
    $
        h(\hamlevel) = 1 - \sum_{i=1}^\colors \frac{\hamlevel_i(\hamlevel_i-1)}{n(n-1)} \leq 1.
    $
    (This is the probability that applying a random transposition to a string in $\slice\hamlevel$ actually changes it.)
    Then for any $A \subseteq \slice\hamlevel$,
    \[
        \E_{\bkappa' \sim \lwr(\hamlevel)}\bracks*{\vol(\bdry_{\bkappa'} A)} \geq \vol(A) + \Energy[1_A]/h(\hamlevel).
    \]
    In particular, at least one lower neighbor of~$A$ has volume at least $\vol(A) + \Energy[1_A]/h(\hamlevel)$.
\end{proposition}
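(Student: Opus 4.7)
\medskip

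\noindent\textbf{Proof plan.} The approach is to set up a single joint random experiment whose marginals recover both sides of the desired inequality, and then relate the two via a trivial containment. Specifically, draw $\bu \sim \pi_\hamlevel$ and independent $\bj, \bj' \sim [n]$ conditioned on $\bu_\bj \ne \bu_{\bj'}$, let $\bc \prec \bd$ be the two colors $\{\bu_\bj, \bu_{\bj'}\}$, and define $\bw$ to agree with $\bu$ off $\{\bj,\bj'\}$ while setting $\bw_\bj = \bw_{\bj'} = \bc$. Let $\bkappa'$ denote the lower neighbor of $\hamlevel$ with $\bkappa'_{\bc} = \hamlevel_{\bc}+1$, $\bkappa'_{\bd} = \hamlevel_{\bd}-1$, so that $\bw \in \slice{\bkappa'}$.

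First I would verify the two distributional facts the proof needs. Since $\Pr[\bu_\bj \ne \bu_{\bj'}]$ depends only on $\hamlevel$ and not on $\bu$, conditioning preserves the uniformity of $\bu$, so $\bkappa'$ has exactly the distribution $\lwr(\hamlevel)$. Moreover, for each lower neighbor $\hamlevel'$ arising via colors $c \prec d$ and each $w \in \slice{\hamlevel'}$, a straightforward counting argument shows that exactly $2\hamlevel'_c(\hamlevel'_c-1)$ triples $(u,j,j')$ in the conditioned model produce $\bw = w$ (one chooses an ordered pair of positions $j \ne j'$ where $w$ takes the color $c$, and then picks which of these two positions becomes the $d$ of $u$); independence of this count from $w$ shows that $\bw \mid \bkappa' = \hamlevel'$ is uniform on $\slice{\hamlevel'}$. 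Consequently
\[
\E_{\bkappa' \sim \lwr(\hamlevel)}\bracks*{\vol(\bdry_{\bkappa'} A)} = \Pr\bracks*{\bw \in \bdry_{\bkappa'} A}.
\]

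Next I would observe the key containment: $\bw \in \bdry_{\bkappa'} A$ whenever $\bu \in A$ or $\bu^{(\bj\,\bj')} \in A$. Indeed, $\bw$ agrees with $\bu$ except at the unique position among $\{\bj,\bj'\}$ where $\bu$ has color $\bd$, where $\bw$ drops to $\bc \prec \bd$; so $\bw \prec \bu$, and analogously $\bw \prec \bu^{(\bj\,\bj')}$. Thus
\[
\Pr\bracks*{\bw \in \bdry_{\bkappa'} A} \;\geq\; \Pr\bracks*{\bu \in A \text{ or } \bu^{(\bj\,\bj')} \in A} \;=\; \vol(A) + \Pr\bracks*{\bu \in A,\ \bu^{(\bj\,\bj')} \notin A},
\]
using the symmetry under swapping $\bu \leftrightarrow \bu^{(\bj\,\bj')}$ in our conditioned model to convert the $\bu \notin A$, $\bu^{(\bj\,\bj')} \in A$ contribution.

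Finally, I would identify the last probability with $\Energy[1_A]/h(\hamlevel)$. The conditioned distribution over ordered $(\bj,\bj')$ with $\bu_\bj \ne \bu_{\bj'}$ is the same as a uniform transposition $\btau \in \transpos{n}$ conditioned on swapping distinct colors of $\bu$ (augmented by a random ordering, which is irrelevant for the event in question), and the probability of the conditioning event is exactly $h(\hamlevel)$. Since $\Energy[1_A] = \Pr_{\bu,\btau}[\bu \in A, \bu^\btau \notin A]$ (and this event forces $\btau$ to swap distinct colors), the conditional probability above equals $\Energy[1_A]/h(\hamlevel)$, completing the bound. The ``in particular'' clause is just the observation that the maximum of a collection is at least its $\lwr(\hamlevel)$-average. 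The only step that takes real thought is the counting argument verifying $\bw \mid \bkappa' \sim \pi_{\bkappa'}$; everything else is bookkeeping.
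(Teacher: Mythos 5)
Your proof is correct and follows essentially the same route as the paper's: set up the joint experiment $(\bu, \bj, \bj', \bc, \bd, \bw)$, observe $\bw \prec \bu$ and $\bw \prec \bu^{(\bj\,\bj')}$ to get the containment, and use the uniformity of the relevant marginals together with the identity $\Energy[1_A] = h(\hamlevel)\Pr[\bu \in A,\, \bu^{(\bj\,\bj')} \notin A \mid \bu_\bj \ne \bu_{\bj'}]$. The paper frames the same experiment through the conditioned transposition $\btau$ and asserts the uniformity of $\bw \mid \bc,\bd$ without detail, whereas you spell out the counting that justifies it; this is a presentational difference only.
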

\begin{remark}
    The same proposition also holds if we consider upper neighbors, $\bkappa' \sim \upr(\hamlevel)$.
\end{remark}
\begin{proof}
    By definition,
    \begin{equation}    \label{eqn:en1a}
        \Energy[1_A] = \Pr_{\substack{\bu \sim \pi_\hamlevel \\ \btau \sim \transpos{n}}}[\bu \in A,\ \bu^{\btau} \not \in A] = h(\hamlevel) \cdot \Pr[\bu \in A,\ \bv \not \in A],
    \end{equation}
    where the random string $\bv \in \slice\hamlevel$ is defined to be $\bu^{\btau}$ conditioned on $\bu \neq \bu^{\tau}$.  In other words, the pair $(\bu,\bv)$ is distributed as a random pair of strings differing by a ``nontrival'' color-swap.  Let $\bc, \bd \in [\colors]$ denote the two colors swapped, with the convention $\bc \prec \bd$.  Then if we define $\hamlevel_{\bc\bd}\triangleleft \hamlevel$ to be the lower neighbor of~$\hamlevel$ having one fewer~$\bd$ and one more~$\bc$, it holds that $\hamlevel_{\bc\bd}$ is distributed according to $\lwr(\hamlevel)$.  Finally, let $\bw \in \slice{\hamlevel_{\bc\bd}}$ to be the string that agrees with $\bu, \bv$ on the unswapped coordinates, and has color~$\bc$ on the swapped coordinates.

    It is easy to see the following: $\bv$ is uniformly distributed on $\slice{\hamlevel}$; conditioned on $\bc$ and $\bd$, the string $\bw$ is uniformly distributed on $\slice{\hamlevel_{\bc\bd}}$; and, $\bw \prec \bu$, $\bw \prec \bv$.  In light of the last of these, we may make the following deductions: If $\bv \in A$, then $\bw \in \bdry_{\hamlevel_{\bc\bd}}A$.  Furthermore, even when $\bv \not \in A$, if $\bu \in A$ then $\bw \in \bdry_{\hamlevel_{\bc\bd}}A$.  Thus
    \begin{align*}
        \Pr[\bw \in \bdry_{\hamlevel_{\bc\bd}}A] &\geq \Pr[\bv \in A] + \Pr[\bu \in A,\ \bv \not \in A] \nonumber \\
        \implies \quad \E_{\bc,\bd}[\vol(\bdry_{\hamlevel_{\bc\bd}}A)] &\geq \vol(A) + \Energy[1_A]/h(\hamlevel). \label{eqn:avg}
    \end{align*}
    In this deduction, on the right we used that $\bv$ is uniformly distributed on~$\slice\hamlevel$ and we used \Cref{eqn:en1a}. On the left we used that --- conditioned on $\bc$ and $\bd$ --- the string $\bw$ is uniform on $\slice{\hamlevel_{\bc\bd}}$.  The proof is completed by recalling that $\kappa_{\bc\bd}$ is distributed according to $\lwr(\hamlevel)$.
\end{proof}

We can now immediately deduce our first Kruskal--Katona Theorem, using just the log-Sobolev inequality \Cref{thm:main}, and \Cref{eqn:sse1}:
\begin{theorem}                                     \label{thm:kk1}
    For $A \subseteq \slice\hamlevel$ we have
    \[
        \E_{\bkappa' \sim \lwr(\hamlevel)}\bracks*{\vol(\bdry_{\bkappa'} A)} \geq \vol(A) + \frac{1}{n} \cdot \vol(A) \ln(1/\vol(A)) \cdot \parens*{\littlesum_{i=1}^n \log_2(4n/\hamlevel_i)}^{-1}.
    \]
    In particular, at least one lower shadow of~$A$ has volume at least the right-hand side.  The analogous statement for upper shadows also holds.
\end{theorem}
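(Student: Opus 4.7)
The plan is to chain together three ingredients that are already in hand: the shadow-versus-energy bound of \Cref{prop:energy-vs-shadow}, the small-set expansion consequence of a log-Sobolev inequality recorded in \Cref{eqn:sse1}, and the explicit log-Sobolev lower bound of \Cref{thm:main}. Each step is a one-line substitution, so the entire proof should be very short.

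First, I would apply \Cref{prop:energy-vs-shadow}, which directly gives
\[
\E_{\bkappa'\sim\lwr(\hamlevel)}\bracks*{\vol(\bdry_{\bkappa'}A)} \geq \vol(A) + \Energy[1_A]/h(\hamlevel) \geq \vol(A) + \Energy[1_A],
\]
using $h(\hamlevel)\leq 1$. So it suffices to lower-bound the Dirichlet energy $\Energy[1_A]$.

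Next, I would invoke \Cref{eqn:sse1} applied to the indicator $1_A$: since $\Energy[1_A]=\vol(A)\cdot\Phi[A]$, the small-set expansion inequality gives
\[
\Energy[1_A] \;\geq\; \tfrac{1}{2}\varrho_\hamlevel \cdot \vol(A)\ln(1/\vol(A)).
\]
Finally, I would substitute the lower bound $\varrho_\hamlevel \geq \bigl(n\sum_{i=1}^{\colors}\tfrac12\log_2(4n/\hamlevel_i)\bigr)^{-1}$ from \Cref{thm:main}; the factor $\tfrac12$ in the sum cancels the $\tfrac12$ from \Cref{eqn:sse1}, leaving exactly the expression claimed in the theorem. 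The final sentence about at least one lower shadow achieving the bound follows because an average is dominated by some term; the statement for upper shadows is identical by the remark following \Cref{prop:energy-vs-shadow}.

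There is no real obstacle here: the proof is essentially a two-line deduction once the three prior results are in place. The only thing worth flagging is keeping track of the factor of $h(\hamlevel)$ versus $1$, and noting that bounding $h(\hamlevel)\leq 1$ is the correct direction — we lose nothing of substance in the regime of interest (constant $\colors$ and balanced $\hamlevel$), where $h(\hamlevel)=\Theta(1)$ anyway.
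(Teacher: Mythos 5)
Your proof is correct and is exactly the paper's intended argument: chain \Cref{prop:energy-vs-shadow} (using $h(\hamlevel)\leq 1$), then \Cref{eqn:sse1} applied to $1_A$, then \Cref{thm:main}, with the two factors of $\tfrac12$ cancelling as you noted. The paper simply states that the theorem ``immediately'' follows from these three ingredients, so your write-up supplies precisely the intended deduction.
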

Thus in the model case when $\vol(A)$ and each $\hamlevel_i/n$ is bounded away from $0$ and~$1$, and $\colors = O(1)$, we get that the average lower shadow of~$A$ has volume at least $\vol(A) + \Omega(1/n)$.  Now using our KKL Theorem (\Cref{cor:kkl}) we can get a ``robust'' version of this statement; the volume increase is in fact on the order of $(\log n)/n$ unless there is a highly influential transposition for~$A$:
\begin{theorem}                                       \label{thm:kkl-robust}
    Let $A \subseteq \slice\hamlevel$.  Assume $\kappa_i \geq pn$ for all $i \in [\colors]$ and that $\eps \leq \vol(A) \leq 1-\eps$.   Then for any $\delta > 0$ we have
    \[
        \E_{\bkappa' \sim \lwr(\hamlevel)}\bracks*{\vol(\bdry_{\bkappa'} A)} \geq \vol(A) + \frac{\log n}{n} \cdot \Omega\parens*{\frac{\eps \delta}{\ell \log(1/p)}},
    \]
    or else there exists $\tau \in \transpos{n}$ with $\Inf_{\tau}[A] \geq 1/n^{\delta}$. The analogous statement for upper shadows also holds.
\end{theorem}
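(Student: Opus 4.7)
The approach will be straightforward: combine the shadow-versus-energy bound of Proposition~\ref{prop:energy-vs-shadow} with the KKL-type energy bound of Corollary~\ref{cor:kkl}, via a single case split on the maximum influence $\calM[1_A]$. Conceptually, Corollary~\ref{cor:kkl} already does the real work --- it converts ``no very influential transposition'' into a quantitative $\frac{\log n}{n}$ lower bound on the energy --- and Proposition~\ref{prop:energy-vs-shadow} converts energy directly into a shadow-volume increment.

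First, I would apply Proposition~\ref{prop:energy-vs-shadow}, using $h(\hamlevel) \leq 1$, to obtain
\[
    \E_{\bkappa' \sim \lwr(\hamlevel)}\bracks*{\vol(\bdry_{\bkappa'} A)} \geq \vol(A) + \Energy[1_A].
\]
So it suffices to lower bound $\Energy[1_A]$ in the alternative where no transposition is influential. If some $\tau \in \transpos{n}$ satisfies $\Inf_\tau[A] \geq 1/n^\delta$, we are done immediately with the second conclusion; otherwise we may assume $\calM[1_A] < 1/n^\delta$, so that $\log(1/\calM[1_A]) > \delta \log n$.

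Plugging this into Corollary~\ref{cor:kkl} then yields
\[
    \Energy[1_A] \geq \Omega\parens*{\frac{\eps}{\ell \log(1/p)}} \cdot \frac{\delta \log n}{n},
\]
which, combined with the shadow inequality above, is precisely the first conclusion of the theorem. The upper-shadow version follows identically using the remark after Proposition~\ref{prop:energy-vs-shadow}.

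There is no real obstacle in this argument; both ingredients already exist, and the only point requiring brief care is the step $\Energy[1_A]/h(\hamlevel) \geq \Energy[1_A]$. Since $\hamlevel$ has $\ell \geq 2$ colors each of mass at least $pn$, $h(\hamlevel)$ is bounded away from $0$, but we do not need the sharper bound --- $h(\hamlevel) \leq 1$ is all we use, and this is what lets the proof go through in one line once Corollary~\ref{cor:kkl} and Proposition~\ref{prop:energy-vs-shadow} are in hand.
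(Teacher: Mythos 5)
Your proof is correct, and it is exactly the argument the paper has in mind (the paper leaves the proof of \Cref{thm:kkl-robust} implicit, saying only that it follows ``using our KKL Theorem (\Cref{cor:kkl})''): combine the shadow-versus-energy bound from \Cref{prop:energy-vs-shadow}, noting $h(\hamlevel) \leq 1$, with the energy lower bound from \Cref{cor:kkl}, splitting on whether $\calM[1_A] \geq n^{-\delta}$.
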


\ignore{
Define $\nu(\hamlevel) = \frac{\colors^2(\colors-1)^2}{2n(n-1)} (\max_i \hamlevel_i)^2$.\ynote{I changed the first factor from $\frac{\colors(\colors-1)}{n(n-1)}$.} Note that if $\colors$ is a constant and $\min_i \hamlevel_i$ is at least a constant fraction of $n$, then $\nu(\hamlevel)$ is also a constant.
\begin{proposition}
Let $A \subset \slice\hamlevel$. Then there exists some upper neighbor $\hamlevel'$ of $\hamlevel$ such that
\[
	\mu_{\hamlevel'}(\bdry_{\hamlevel'} A) \geq \mu_\hamlevel + \Energy[1_A]/\nu(\hamlevel).
\]
\end{proposition}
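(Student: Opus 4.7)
The plan is to decompose $\Energy[1_A]$ as a sum over color-pair contributions, extract a single ``heavy'' pair by pigeonhole, and then analyze the bipartite projection from $\slice\hamlevel$ into the corresponding upper neighbor via a double-counting argument.

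For each $c \prec d$ in $[\colors]$, let $\hamlevel'$ denote the upper neighbor of $\hamlevel$ with $\hamlevel'_c = \hamlevel_c - 1$ and $\hamlevel'_d = \hamlevel_d + 1$, and let $E_{cd}$ denote the contribution to $\Energy[1_A]$ from transpositions swapping a $c$-coordinate with a $d$-coordinate. Since $\Energy[1_A] = \sum_{c \prec d} E_{cd}$, pigeonhole over the $\binom{\colors}{2}$ pairs produces one with $E_{cd} \geq 2\Energy[1_A]/(\colors(\colors - 1))$. For this distinguished pair I would set up the bipartite graph with $v \in \slice\hamlevel$ adjacent to $w \in \slice{\hamlevel'}$ whenever $w$ is obtained from $v$ by converting one of its $c$'s into a $d$; each $v$ has $\hamlevel_c$ upper neighbors, each $w$ has $\hamlevel_d + 1$ lower neighbors (write $M(w)$ for this set), and $w \in \bdry_{\hamlevel'} A$ iff $M(w) \cap A \neq \emptyset$.

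Writing $k(w) = |M(w) \cap A|$ and $\delta(w) = (\hamlevel_d + 1) - k(w)$, three ingredients drive the estimate: (i)~double-counting bipartite edges gives $\sum_w k(w) = |A|\hamlevel_c$, which combined with the size identity $|\slice\hamlevel|\hamlevel_c = |\slice{\hamlevel'}|(\hamlevel_d + 1)$ rewrites $\vol(A)\,|\slice{\hamlevel'}|$ as $\sum_w k(w)/(\hamlevel_d+1)$; (ii)~the $c$-$d$-swap boundary edges of $A$ in $\slice\hamlevel$ are in bijection with unordered pairs $\{v, v'\} \subseteq M(w)$ (for some $w$) having exactly one endpoint in $A$, yielding $E_{cd}\,|\slice\hamlevel|\binom{n}{2} = \sum_w k(w)\delta(w)$; and (iii)~$|\bdry_{\hamlevel'} A| - \vol(A)\,|\slice{\hamlevel'}| = \sum_{w:\,k(w) \geq 1} \delta(w)/(\hamlevel_d + 1) \geq \sum_w k(w)\delta(w)/(\hamlevel_d + 1)^2$, where the inequality uses $k(w) \leq \hamlevel_d + 1$. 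Chaining (ii) and (iii), dividing by $|\slice{\hamlevel'}|$, invoking the volume ratio, and applying the crude bound $\hamlevel_c(\hamlevel_d + 1) \leq 2(\max_i \hamlevel_i)^2$ together with the pigeonhole lower bound on $E_{cd}$ produces the desired excess $\vol(\bdry_{\hamlevel'} A) - \vol(A) \geq \Energy[1_A]/\nu(\hamlevel)$; the extra factor $\colors(\colors - 1)/4$ built into $\nu(\hamlevel)$ comfortably absorbs any slack.

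The main obstacle is carefully setting up ingredient~(ii): one must verify that each unordered $c$-$d$-swap boundary edge $\{v, v'\} \subseteq \slice\hamlevel$ lifts to a unique $w \in \slice{\hamlevel'}$, namely the string agreeing with both endpoints outside the two swapped coordinates and carrying color $d$ at each of them, and that conversely each $w$ together with an unordered pair of distinct elements of $M(w)$ recovers a $c$-$d$-swap edge of the required form. Once that bijection is in place, with ordered-vs-unordered conventions kept straight, the remaining steps are routine arithmetic.
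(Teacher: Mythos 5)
Your argument is sound in its combinatorial core and takes a genuinely different route from the paper's. The paper proves this by a probabilistic coupling: fix colors $c \prec d$, draw $\bu$ uniform on $\slice\hamlevel$, pick a random $c$-position $\bi$ and $d$-position $\bj$, set $\bv = \bu^{(\bi\;\bj)}$ and $\bw = \bu^{\bi \to d}$; then $\bw$ is uniform on the upper neighbor and lies in the shadow whenever $\bu \in A$ or $\bv \in A$, giving $\vol(\bdry_{\hamlevel'}A) \geq \Pr[\bu \in A] + \tfrac12\Pr[1_A(\bu) \neq 1_A(\bv)]$; it then sums this over all $\binom{\colors}{2}$ color pairs, lower-bounds the max by the average, and bounds $\Pr[\bu' \neq \bv'] \leq \binom{\colors}{2}(\max_i \hamlevel_i)^2/\binom{n}{2}$. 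Your pigeonhole to a single heavy pair plays the role of the paper's max-versus-average step, and your bipartite double counting (each $c$-$d$ boundary edge lifting to a unique $w$ with two distinct lower neighbors, one in $A$ and one not) is the counting incarnation of the same lifting idea; items (i)--(iii) are all correct, including the bijection you flag as the delicate point.

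There is, however, one place where your bookkeeping does not deliver the stated constant: your chain yields an excess of at least $E_{cd}\binom{n}{2}/\bigl(\hamlevel_c(\hamlevel_d+1)\bigr) \geq \Energy[1_A]\binom{n}{2}/\bigl(2\binom{\colors}{2}(\max_i\hamlevel_i)^2\bigr)$, whereas the target is $\Energy[1_A]/\nu(\hamlevel) = \Energy[1_A]\binom{n}{2}/\bigl(\binom{\colors}{2}^2(\max_i\hamlevel_i)^2\bigr)$. The ratio is $\binom{\colors}{2}/2$, so the ``slack built into $\nu$'' absorbs your crude factor $2$ only when $\colors \geq 3$; for $\colors = 2$ you fall short by a factor of $2$ (and even with the sharp value $\hamlevel_c(\hamlevel_d+1)$ you miss by $(m+1)/m$ in the balanced case $\hamlevel = (m,m)$), because the pigeonhole gains nothing when there is only one color pair. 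The fix is local: in step~(iii), observe that whenever $\delta(w) \geq 1$ you have $k(w) \leq \hamlevel_d$, so $\delta(w)/(\hamlevel_d+1) \geq k(w)\delta(w)/\bigl(\hamlevel_d(\hamlevel_d+1)\bigr)$, which upgrades the per-pair excess to $E_{cd}\binom{n}{2}/(\hamlevel_c\hamlevel_d)$ --- exactly the paper's per-pair bound --- and then $\hamlevel_c\hamlevel_d \leq (\max_i\hamlevel_i)^2 \leq \binom{\colors}{2}(\max_i\hamlevel_i)^2$ closes the argument for every $\colors \geq 2$.
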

\begin{proof}
First fix two colors $c \prec d \in [\colors]$.
Let $\bu\sim \slice\hamlevel$ be uniformly random. Now let $\bi \sim [n]$ be a uniformly random index conditioned on $\bu_{\bi} = c$, and let $\bj \sim [n]$ be a uniformly random index conditioned on $\bu_\bj = d$. Let $\bv = \bu^{(\bi\ \bj)}$ and $\bw = \bu^{\bi \to d}$ (i.e., $\bu$ with its $\bi^\text{th}$ coordinate changed to $d$). Let $\hamlevel'$ denote the upper-neighboring slice that $\bw$ is on. Then $\bv$ is also uniformly distributed on $\slice\hamlevel$, and $\bw$ is uniformly distributed on $\slice{\hamlevel'}$. Also, if either $\bu \in A$ or $\bv \in A$, then $\bw \in \bdry_{\hamlevel'} A$. Then we can write
\begin{align}\label[ineq]{eqn:upper-shadow-size}
\mu_{\hamlevel'}(\bdry_{\hamlevel'} A) &= \Pr[\bw \in \bdry_{\hamlevel'}A] \geq \Pr[\bu \in A \text{ or } \bv \in A] = \Pr[\bu \in A] + \Pr[\bu\notin A \text{ and }\bv \in A].
\end{align}
Consider the second event above. By symmetry,
\begin{align*}
 \Pr[\bu\notin A \text{ and }\bv \in A] &= \frac12 \Pr[\bone_A(\bu) \ne \bone_A(\bv)].
\end{align*}
Now consider choosing $\bu' \sim \slice\hamlevel$ uniformly, and also a transposition $\tau = (\bi' \ \bj') \sim \transpos{n}$ uniformly. Let $\bv' = (\bu')^\tau$. Let $E_{c,d}$ be the event that $\{\bu'_{\bi'}, \bu'_{\bj'}\} = \{c,d\}$. Conditioned on $E_{c,d}$, $(\bu',\bv')$ has the same distribution as $(\bu,\bv)$. Therefore, combining with \Cref{eqn:upper-shadow-size}, we have
\begin{align*}
\mu_{\hamlevel'}(\bdry_{\hamlevel'} A) \geq \Pr[\bu\in A] + \frac12 \Pr[\bone_A(\bu') \ne \bone_A(\bv')\,|\,E_{c,d}].
\end{align*}
Now, we sum over all choices of $c \prec d$, and get
\begin{align*}
\sum_{\hamlevel \trianglelefteq \hamlevel'} \mu_{\hamlevel'}(\bdry_{\hamlevel'} A) &\geq \binom{\colors}{2}\Pr[\bu\in A] + \sum_{c \prec d} \frac12 \Pr[\bone_A(\bu') \ne \bone_A(\bv')\,|\,E_{c,d}]\\
&= \binom{\colors}{2}\Pr[\bu\in A] + \frac12 \Pr[\bone_A(\bu') \ne \bone_A(\bv')\,|\,\bu' \ne \bv']\\
&= \binom{\colors}{2}\Pr[\bu\in A] + \frac12 \Pr[\bone_A(\bu') \ne \bone_A(\bv')]\cdot(1/\Pr[\bu' \ne \bv']).
\end{align*}
Meanwhile,
\[
\Pr[\bu' \ne \bv'] = \sum_{c \prec d \in [\colors]} \Pr[\{\bu'_{\bi'}, \bu'_{\bj'}\} = \{c,d\}] = \sum_{c < d \in [\colors]} \frac{(\hamwt{c}{\bu'})(\hamwt{d}{\bu'})}{\binom n 2} \leq \frac{\binom{\colors}{2}}{\binom{n}{2}} \bigl(\max_i \hamlevel_i\bigr)^2.
\]
Therefore,
\[
\max_{\hamlevel \trianglelefteq \hamlevel'} \mu_{\hamlevel'}(\bdry_{\hamlevel'} A) \geq \Pr[\bu\in A] +
\frac{\Energy[\bone_A]}{\binom{\colors}{2}} \cdot \frac{\binom{n}{2}}{\binom{\colors}{2}} \bigl( \max_i \hamlevel_i \bigr)^{-2}. \qedhere
\]
\end{proof}

Hence applying generalized KKL, we get the following corollary:
\begin{corollary}
For all $\ep > 0$ and constant $\colors$ there exists $\delta > 0$ such that the following holds: If $A \subseteq \slice\hamlevel$, $\ep \leq \min_i\hamlevel_i$, and $\ep \leq \mu_\hamlevel(A) \leq 1-\ep$, then there exists an upper neighbor $\hamlevel'$ of $\hamlevel$ such that
\begin{equation}
\mu_{\hamlevel'}(\bdry_{\hamlevel'} A) \geq \mu_{\hamlevel}(A) + \delta \cdot \frac{\log n}{n},
\end{equation}
unless there exists $\tau \in \transpos n$ such that $\Inf_\tau(\bone_A) \geq \frac 1 {n^\ep}$.
\end{corollary}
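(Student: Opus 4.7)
The plan is to combine \Cref{prop:energy-vs-shadow}, which converts a lower bound on energy into an average shadow volume increase, with our KKL-style bound \Cref{cor:kkl}, which forces large energy whenever no single transposition has large influence. The only glue needed is a case split on the maximum influence $\calM[1_A]$.

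First, apply \Cref{prop:energy-vs-shadow}: since $h(\hamlevel) \le 1$, we have
\[
    \E_{\bkappa' \sim \lwr(\hamlevel)}\bracks*{\vol(\bdry_{\bkappa'} A)} \geq \vol(A) + \Energy[1_A].
\]
So it suffices to lower-bound $\Energy[1_A]$ by $\frac{\log n}{n} \cdot \Omega\parens*{\frac{\eps \delta}{\ell \log(1/p)}}$, \emph{unless} the alternative conclusion (existence of a transposition $\tau$ with $\Inf_\tau[A] \geq 1/n^\delta$) already holds.

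Next, perform the case split on $\calM[1_A] = \max_\tau \Inf_\tau[1_A]$. If $\calM[1_A] \geq 1/n^\delta$, then there is nothing to prove: such a transposition $\tau$ witnesses the alternative conclusion. Otherwise, $\calM[1_A] < 1/n^\delta$, so $\log(2/\calM[1_A]) \geq \delta \log n$. Plugging this into \Cref{cor:kkl} yields
\[
    \Energy[1_A] \geq \Omega\parens*{\frac{\eps}{\ell \log(1/p)}} \cdot \frac{\delta \log n}{n},
\]
which is exactly the required lower bound. Combining with the first display completes the argument; the statement for upper shadows follows identically using the remark after \Cref{prop:energy-vs-shadow}.

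There is no real obstacle here beyond assembling the two tools. The only minor subtlety is ensuring the constants in \Cref{cor:kkl} depend on $\eps$, $p$, and $\ell$ in the way advertised, but this is immediate from the derivation of \Cref{cor:kkl} (which already exposes all three parameters). Consequently the proof is essentially a one-line deduction once \Cref{prop:energy-vs-shadow} and \Cref{cor:kkl} are in hand.
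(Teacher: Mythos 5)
Your proposal is correct, and it matches what the paper intends: \Cref{thm:kkl-robust} (the current form of the statement in question) is stated without an explicit proof precisely because it is the immediate combination of \Cref{prop:energy-vs-shadow} and \Cref{cor:kkl} you describe, glued together by the case split on $\calM[1_A]$. You should just double-check the bookkeeping so the parameters line up exactly with the corollary as stated: the ``unless'' alternative there is $\calM[1_A] \geq 1/n^{\ep}$, with $\ep$ the \emph{given} parameter, so the case split should be on $\calM[1_A]$ against $1/n^{\ep}$ (not $1/n^{\delta}$). In the remaining case $\calM[1_A] < 1/n^{\ep}$, one has $\log(1/\calM[1_A]) > \ep \log n$ (you wrote $\log(2/\calM[1_A])$ where \Cref{cor:kkl} has $\log(1/\calM[1_A])$, a harmless slip), and then using $p \geq \ep$ (so $\log(1/p) \leq \log(1/\ep)$) and $\colors = O(1)$ in \Cref{cor:kkl} yields $\Energy[1_A] \geq \Omega\parens*{\ep^2/(\colors \log(1/\ep))} \cdot \tfrac{\log n}{n}$; this determines the value of $\delta$ whose \emph{existence} the corollary asserts. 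Finally, the corollary asks for an \emph{upper} neighbor, so invoke the upper-shadow variant of \Cref{prop:energy-vs-shadow} (via the remark following it), and use the ``in particular'' clause to pass from the expectation bound to the existence of a single witnessing $\hamlevel'$. With these adjustments the argument is exactly right.
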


}

As in~\cite{OW13a}, we now give a conceptual improvement to the ``or else'' clause in \Cref{thm:kkl-robust}.  Let us work with upper shadows rather than lower shadows going forward.  The natural example for sets~$A$ with upper-shadow expansion ``only'' $\Omega(1/n)$ are ``dictator'' sets such as $A = \{u : u_1 = \ell\}$.  For such sets, all transpositions of the form $(1\;j)$ indeed have huge influence. However, it's not so natural to single out one such~$(1\;j)$ as the ``reason'' for the small expansion; instead, we would prefer to say the reason is that~$A$ is highly ``correlated'' with coordinate~$1$.  To this end, let us make a definition.
\begin{definition}
Let $A \subseteq \slice\hamlevel$,  let $j \in [n]$, and let $c \prec d$ be colors in~$[\colors]$.  The \emph{correlation} of $A$ with respect to coordinate $j$ and colors $c,d$ is
\[
\corr_{j,c,d}[A] = \Pr_{\bu \sim \pi_\hamlevel}[\bu \in A \mid \bu_j = d] - \Pr_{\bu \sim \pi_\hamlevel} [\bu \in A \mid \bu_j = c].
\]
\end{definition}

For simplicity, we present the following theorem without stating the most general possible settings for parameters:
\begin{theorem}                                     \label{thm:kkl-robust-better-simple}
    For $n \longrightarrow \infty$, let $A \subseteq \slice\hamlevel$, with $\colors = O(1)$,  $\kappa_i/n \geq \Omega(1)$ for all $i \in [\colors]$ and $\Omega(1) \leq \vol(A) \leq 1 - \Omega(1)$.   Then
    \begin{equation} \label[ineq]{eqn:robust-kk-hypothesis}
        \E_{\kappa' \sim \upr(\hamlevel)} \bracks*{\vol(\bdry_{\kappa'} A)} \geq \vol(A) + \Omega\parens*{\frac{\log n}{n}},
    \end{equation}
    or else there exists $j \in [n]$ and colors $c \prec d \in [\ell]$ with $\corr_{j,c,d}[A] \geq 1/n^{.01}$.
\end{theorem}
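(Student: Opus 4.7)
My plan is to prove the contrapositive: assume every correlation $\corr_{j,c,d}[A]$ is at most $n^{-0.01}$, and derive $\E_{\bkappa' \sim \upr(\hamlevel)}[\vol(\bdry_{\bkappa'} A)] - \vol(A) \geq \Omega(\log n / n)$. The first move is to invoke the earlier robust Kruskal--Katona (Theorem~\ref{thm:kkl-robust}) with some small $\delta > 0$ (say $\delta = 0.001$). If its volume-growth alternative holds, we are done; otherwise we get a transposition $\tau = (j_1\;j_2)$ with $\Inf_\tau[A] \geq n^{-\delta}$, and the remaining work is to upgrade this transposition-level witness into a single-coordinate one.

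To do the upgrade, decompose the transposition influence by the colors appearing at positions $j_1$ and $j_2$:
\[
\Inf_\tau[A] \;=\; \sum_{c \neq d} \Pr[\bu_{j_1}=c,\ \bu_{j_2}=d] \cdot P_{cd}, \qquad P_{cd} := \Pr\bigl[\,1_A(\bu) \neq 1_A(\bu^{\tau}) \,\bigm|\, \bu_{j_1}=c,\ \bu_{j_2}=d\,\bigr].
\]
Since $\colors = O(1)$ and each joint probability is $\Theta(1)$, pigeonhole over the $O(\colors^2)$ color pairs produces a specific $(c,d)$ with $P_{cd} \geq \Omega(n^{-\delta})$. Let $f_{cd}(\bv) := 1_A(c,d,\bv) - 1_A(d,c,\bv)$ on the reduced multislice: this is a $\{-1,0,1\}$-valued function with $\|f_{cd}\|_1 = P_{cd}$. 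Its expectation $M := \E_\bv[f_{cd}]$ is the pair-correlation $\Pr[A \mid \bu_{j_1}{=}c,\bu_{j_2}{=}d] - \Pr[A \mid \bu_{j_1}{=}d,\bu_{j_2}{=}c]$, which, by the representation-theoretic decomposition of $M^\hamlevel$, can be written (up to $O(1/n)$ error) as a degree-$1$ contribution proportional to $\corr_{j_1,c,d}[A] - \corr_{j_2,c,d}[A]$ plus a degree-$2$ ``antisymmetric 2-junta'' contribution $R$.

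In the \emph{aligned} case $|M| \gtrsim n^{-\delta}$, the hypothesis that every $\corr_{j,c,d}[A] \leq n^{-0.01} \ll n^{-\delta}$ forces the degree-$2$ residual $|R| \geq \Omega(n^{-\delta})$, meaning $1_A$ has a genuine 2-junta-like second-order component at positions $(j_1, j_2)$. The main obstacle is converting this second-order structure into the claimed $\Omega(\log n / n)$ volume growth: Proposition~\ref{prop:energy-vs-shadow} alone does not suffice, as the 2-junta $A = \{u : u_{j_1} \neq u_{j_3}\}$ in the balanced Boolean slice (with $\Energy[1_A] = \Theta(1/n)$ but $\vol(\bdry A) - \vol(A) = \Theta(1)$) already illustrates. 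I would overcome this by a refined combinatorial shadow analysis dedicated to sets with nontrivial second-order components --- in the spirit of \cite{OW13a} --- showing that the antisymmetric direction at $(j_1,j_2)$ forces many additional shadow elements under $\upr(\hamlevel)$. The \emph{oscillatory} subcase $|M| \ll \|f_{cd}\|_1$, where $f_{cd}$ has a large $L^1$-norm but small mean, is handled by recursing the same analysis on $f_{cd}$ on the reduced multislice and using the multislice representation theory (cf.~\cite{Wim14}) to track how downstream conditional correlations can be averaged back into an unconditional correlation upstream. Balancing the polynomial losses across the pigeonhole, the unconditioning, and the recursion then yields the stated $n^{-0.01}$ threshold.
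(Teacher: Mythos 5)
Your opening move is the same as the paper's: prove the contrapositive, invoke \Cref{thm:kkl-robust} to extract a transposition $\tau = (j_1\;j_2)$ with $\Inf_\tau[A]$ polynomially large, then pigeonhole over color pairs $(c,d)$ to find one with a substantial contribution. From there, however, you diverge, and the divergence is where the argument collapses.

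The paper never introduces a degree decomposition at this point. Its path from a large color-pair influence to a large single-coordinate correlation is purely elementary: in \Cref{lem:robust-kk-2}, a string $\bw$ witnessing $(c,d,\bw)\notin A$ and $(d,c,\bw)\in A$ immediately produces a shadow element $(d,d,\bw) \in \bdry_{\kappa_{dc}} A$, giving
\[
\vol_d(\bdry_{\kappa_{dc}}A) - \vol_c(A) \;\geq\; \gamma;
\]
and in \Cref{lem:robust-kk-1}, the failure of the average shadow-growth inequality forces
\[
\Pr[\bv_1 = d]\,\vol_d(\bdry_{\kappa_{dc}}A) + \Pr[\bv_1 = c]\,\vol_c(\bdry_{\kappa_{dc}}A)
\;-\;
\Pr[\bu_1 = d]\,\vol_d(A) - \Pr[\bu_1 = c]\,\vol_c(A) \;\leq\; \eta/\eps^2,
\]
with $\eta = O(\log n / n)$. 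The shadow volumes then \emph{cancel out} between these two displays, leaving an inequality involving only $\vol_d(A)$ and $\vol_c(A)$, which is exactly $\corr_{1,c,d}[A] \gtrsim n^{-\delta}$. No degree-$1$ vs.\ degree-$2$ split, no recursion, no representation theory.

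Your proposal instead routes through a representation-theoretic decomposition of $M = \Pr[A\mid \bu_{j_1}=c, \bu_{j_2}=d] - \Pr[A\mid \bu_{j_1}=d, \bu_{j_2}=c]$, and both ensuing branches are left open. The claim that $M$ decomposes ``up to $O(1/n)$ error'' into $\corr_{j_1,c,d}[A]-\corr_{j_2,c,d}[A]$ plus an ``antisymmetric $2$-junta'' residual is asserted, not proved, and is not a standard identity --- conditioning on two fixed coordinates does not cleanly separate into single-coordinate marginals plus a degree-$2$ remainder for $M^\kappa$. Worse, both terminal steps are placeholders: the ``aligned'' case defers to an unspecified ``refined combinatorial shadow analysis dedicated to sets with nontrivial second-order components,'' and the ``oscillatory'' case defers to an unspecified recursion with an unspecified termination and loss-accounting. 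These are precisely the parts that a proof must actually supply, and neither is obviously true. The example you cite, $A = \{u : u_{j_1}\neq u_{j_3}\}$, does not in fact show a difficulty --- it has $\vol(\bdry A) - \vol(A) = \Theta(1)$, far above the target, so nothing needs rescuing there. The actual difficulty the paper handles --- and your sketch does not --- is that a high-influence transposition alone says nothing about a single coordinate until one relates the $(d,d,\bw)$-shadow element to $A$'s mass at a fixed coordinate and color, which is the content of \Cref{lem:robust-kk-2}, and then cancels the shadow volumes using \Cref{lem:robust-kk-1}. As written, your proposal contains a genuine gap at the core implication (large transposition influence $\Rightarrow$ large single-coordinate correlation) and would need to be replaced wholesale with an argument like the paper's.
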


Suppose that \Cref{eqn:robust-kk-hypothesis} doesn't hold. Then there must exist $\tau \in \transpos n$ such that $\Inf_\tau[A] \geq 1/n^{.01}$. Without loss of generality, we can assume that $\tau = (1\;2)$. We then deduce two consequences of this, relating the volume of $A$ and its upper shadows. Finally we will combine these to get that $A$ is correlated to a single color change on one coordinate. The proof here has similar ideas to~\cite[Lemma A.6]{OW13a}, but we reproduce it for completeness. We also introduce the notation $\vol_c(A) \coloneqq \Pr_{\bu \sim \pi_\kappa} [\bu \in A ~|~ \bu_1 = c]$.

\begin{lemma}\label{lem:robust-kk-1}
Let $A \subseteq \slice\hamlevel$, with $\colors = O(1)$ as $n \to \infty$. Let $\eps \leq \hamlevel_c /n \leq 1-\eps$ for all $c \in [\ell]$. Further suppose that
\[
	\E_{\hamlevel' \sim \upr(\hamlevel)} \bracks*{\vol(\bdry_{\hamlevel'} A)} - \vol(A) \leq \eta.
\]
For every $c,d \in [\ell]$ with $c \prec d$, let $\hamlevel_{dc} \triangleright \hamlevel$ be the upper neighbor with one more $d$ and one less $c$ than $\hamlevel$. Then we have
\begin{equation} \label{eqn:robust-kk-interm}
	\sum_{e \in \{c,d\}}  \parens*{\Pr_{\bv \sim \pi_{\hamlevel_{dc}}}\bracks*{\bv \in \bdry_{\hamlevel_{dc}}A \land \bv_1 = e} - \Pr_{\bu \sim \pi_{\hamlevel}}\bracks*{\bu \in A \land \bu_1 = e}} \leq \frac{1}{\ep^2} \eta.
\end{equation}
\end{lemma}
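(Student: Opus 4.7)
The plan is to prove this lemma by exploiting \emph{pointwise} (rather than averaged) monotonicity of upper shadows, and then applying a color-by-color decomposition on the first coordinate.

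First I would establish the elementary inequality $\vol(\bdry_{\hamlevel'} A) \geq \vol(A)$ for \emph{each individual} upper neighbor $\hamlevel' \triangleright \hamlevel$, via a standard double-counting argument on the bipartite graph whose edges join $u \in \slice\hamlevel$ to $v \in \slice{\hamlevel'}$ when $v$ is obtained from $u$ by changing a single $c$ to a $d$ (where $\hamlevel'$ has one more $d$ and one less $c$). The degrees are $\hamlevel_c$ on the $\slice\hamlevel$-side and $\hamlevel_d + 1$ on the $\slice{\hamlevel'}$-side, and these factors combine with the ratio $|\slice\hamlevel|/|\slice{\hamlevel'}| = \hamlevel_c/(\hamlevel_d+1)$ in exactly the way that yields the claim. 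Consequently \emph{every} summand in the hypothesized expectation over $\upr(\hamlevel)$ is nonnegative, and extracting just the specific term for $\hamlevel' = \hamlevel_{dc}$ gives
\[
    \vol(\bdry_{\hamlevel_{dc}} A) - \vol(A) \leq \frac{\eta}{\Pr_{\bkappa' \sim \upr(\hamlevel)}[\bkappa' = \hamlevel_{dc}]}.
\]
Unwinding the definition of $\upr(\hamlevel)$, this probability is $2\hamlevel_c\hamlevel_d / \sum_{i \neq i'} \hamlevel_i\hamlevel_{i'}$, which by the hypothesis $\hamlevel_i/n \geq \ep$ is at least $2\ep^2$; hence $\vol(\bdry_{\hamlevel_{dc}}A) - \vol(A) \leq \eta/(2\ep^2)$.

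Next I would decompose by the color of coordinate $1$. Write $P_e = \Pr_{\bu \sim \pi_\hamlevel}[\bu \in A \land \bu_1 = e]$ and $Q_e = \Pr_{\bv \sim \pi_{\hamlevel_{dc}}}[\bv \in \bdry_{\hamlevel_{dc}} A \land \bv_1 = e]$, so that $\sum_{e} (Q_e - P_e) = \vol(\bdry_{\hamlevel_{dc}}A) - \vol(A)$. The key observation is that for $e \notin \{c,d\}$, conditioning on coordinate $1$ having color $e$ restricts everything to a sub-multislice on the remaining $n-1$ coordinates: writing $A_e \subseteq \slice{\hamlevel \setminus e}$ and $B_e \subseteq \slice{\hamlevel_{dc} \setminus e}$ for the corresponding restricted sets, the target slice $\hamlevel_{dc} \setminus e$ is an upper neighbor of $\hamlevel \setminus e$ of the same ``$c \to d$'' type. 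Moreover, because any shadow-edge must flip a single $c$ to $d$, whenever $v_1 = e \notin \{c,d\}$ one necessarily has $u_1 = e$ also, so $B_e$ is precisely the upper shadow of $A_e$ inside $\slice{\hamlevel_{dc} \setminus e}$. Applying the pointwise inequality from the first step to this sub-problem gives $Q_e \geq P_e$ for every such~$e$.

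Combining, $\sum_{e \in \{c,d\}}(Q_e - P_e) \leq \sum_{e}(Q_e - P_e) \leq \eta/(2\ep^2) \leq \eta/\ep^2$, which is the claim. The only nontrivial ingredient is the pointwise bound $\vol(\bdry_{\hamlevel'} A) \geq \vol(A)$ for each individual upper neighbor; the remark following \Cref{prop:energy-vs-shadow} asserts only its averaged version, so I would include the short double-counting calculation explicitly. Beyond this, the argument is pure bookkeeping, and I expect no further obstacle.
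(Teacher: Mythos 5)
Your proof is essentially correct and arrives at the right bound, but it takes a slightly different route from the paper's. The paper first expands the full expectation $\E_{\kappa'\sim\upr(\kappa)}[\vol(\bdry_{\kappa'}A)]-\vol(A)$ as a sum over all pairs $(c,d)$ \emph{and} all first-coordinate colors $e$, drops the nonnegative terms with $e\notin\{c,d\}$, and only then singles out one pair $(c,d)$; that last step implicitly requires that $\sum_{e\in\{c,d\}}(Q_e-P_e)\ge 0$ for every pair, which the paper does not spell out (it is true, but requires the extra observation $\vol_d(\bdry_{\kappa_{dc}}A)\ge\vol_c(A)$). You instead use the pointwise inequality $\vol(\bdry_{\kappa'}A)\ge\vol(A)$ to peel off the single term $\kappa'=\kappa_{dc}$ \emph{before} decomposing on the first coordinate, which dodges that implicit positivity claim and yields a more self-contained argument. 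This is a genuine simplification of the paper's bookkeeping.

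There is, however, one reasoning error in the last step, harmless here but worth correcting. You assert that ``any shadow-edge must flip a single $c$ to $d$,'' so that whenever $v_1=e\notin\{c,d\}$ one has $u_1=e$ and hence $B_e$ is \emph{precisely} the upper shadow of $A_e$. But the paper's shadow is defined via the coordinate-wise partial order on $[\ell]^n$, not via single flips, and when $c$ and $d$ are not $\prec$-consecutive the relation $v\succ u$ between $u\in\slice{\kappa}$ and $v\in\slice{\kappa_{dc}}$ can change several coordinates simultaneously. For example with $\ell=3$, $c=1$, $d=3$, $\kappa=(1,1,1)$: the string $233\in\slice{(0,1,2)}$ satisfies $233\succ 123\in\slice{(1,1,1)}$, changing two coordinates with neither flip being $1\to 3$. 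Consequently $B_e\supsetneq\bdry_{\kappa_{dc}\setminus e}(A_e)$ is possible. Fortunately, for your argument you only need $B_e\supseteq\bdry_{\kappa_{dc}\setminus e}(A_e)$ (which \emph{is} true, since a shadow-edge entirely within positions $2,\dots,n$ gives a shadow-edge in the full slice with first coordinate $e$ fixed); then $\vol(B_e)\ge\vol(\bdry A_e)\ge\vol(A_e)$ gives $Q_e\ge P_e$ and the proof goes through. You should also note a small slip: the ratio $|\slice\hamlevel|/|\slice{\hamlevel'}|$ equals $(\hamlevel_d+1)/\hamlevel_c$, not $\hamlevel_c/(\hamlevel_d+1)$, though the double count $\hamlevel_c|A|\le(\hamlevel_d+1)|\bdry_{\hamlevel'}A|$ still combines with the correct ratio to yield $\vol(\bdry_{\hamlevel'}A)\ge\vol(A)$.
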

\begin{proof} Let $P_{\bc,\bd}$ be the probability that $\kappa' \sim \upr(\kappa)$ is such that $\kappa'$ has one more $\bd$ and one less $\bc$ than $\kappa$. We use the convention that $\bu \sim \slice\hamlevel$ and $\bv$ is sampled from an upper neighbor of $\bu$. By definition,
\begin{align*}
    \E_{\hamlevel' \sim \upr(\hamlevel)} \bracks*{\vol(\bdry_{\hamlevel'} A)} - \vol(A) &= \sum_{\bc,\bd,e \in [\ell]} \vol_e (\bdry_{\kappa_{\bd\bc}}A) \Pr[\bv_1 = e] P_{\bc,\bd} - \sum_{\bc,\bd,e \in [\ell]} \vol_e(A) \Pr[\bu_1 = e] P_{\bc,\bd}.
\end{align*}
If we sample $\bu \sim \slice\hamlevel$ conditioned on $\bu_1 = e$, and change a random $\bc$ to a $\bd$, this distribution is uniform on $\slice{\hamlevel_{\bd\bc}}$ conditioned on the first coordinate being $e$. Therefore, conditioned on $\bc$ and $\bd$, for $e \in [\ell]$, $\vol_e(A) \leq \vol_e(\bdry_{\kappa_{\bd\bc}} A)$. We also have that for $e \ne \bc, \bd$, $\Pr[\bv_1 = e] = \Pr[\bu_1 = e]$. Putting these together,
\[
    \sum_{\bc,\bd \in [\ell]}\sum_{e \in \{\bc,\bd\}} \parens*{\vol_e (\bdry_{\kappa_{\bd\bc}}A) \Pr[\bv_1 = e] P_{\bc,\bd} - \vol_e(A) \Pr[\bu_1 = e] P_{\bc,\bd}} \leq \eta.
\]
In particular, every pair $c,d \in [\colors]$ satisfies
\[
	\sum_{e \in \{c,d\}}  \parens*{\Pr_{\bv \sim \pi_{\hamlevel_{dc}}}\bracks*{\bv \in \bdry_{\hamlevel_{dc}}A \land \bv_1 = e} - \Pr_{\bu \sim \pi_{\hamlevel}}\bracks*{\bu \in A \land \bu_1 = e}}P_{c,d} \leq \eta.
\]
Finally, we bound $P_{c,d}$. This is the probability that, for any $u \in \slice\hamlevel$ and $\bi,\bj \in [n]$ chosen uniformly and independently, $\{u_{\bi}, u_{\bj}\} = \{c,d\}$ conditioned on $u_{\bi} \ne u_{\bj}$. We can calculate this probability explicitly:
\begin{align*}
\Pr_{\bi,\bj \sim [n]} \bracks*{u_{\bi} = c, u_{\bj} = d \mid u_{\bi} \ne u_{\bj}} &= \frac{\kappa_c}{n} \cdot \frac{\kappa_d}{n-\kappa_c} \cdot h(\kappa)^{-1} \geq \eps^2.
\end{align*}
Therefore,
\[
	\sum_{e \in \{c,d\}}  \parens*{\Pr_{\bv \sim \pi_{\hamlevel_{dc}}}\bracks*{\bv \in \bdry_{\hamlevel_{dc}}A \land \bv_1 = e} - \Pr_{\bu \sim \pi_{\hamlevel}}\bracks*{\bu \in A \land \bu_1 = d}} \leq \frac{1}{\eps^2}\eta. \qedhere
\]
\end{proof}

\begin{lemma}\label{lem:robust-kk-2}
Let $A \subseteq \slice\hamlevel$, with $\colors = O(1)$ as $n \longrightarrow \infty$. Let $\eps \leq \hamlevel_c /n \leq 1-\eps$ for all $c \in [\ell]$. Further suppose that
\[
    \Inf_{(1\;2)}[A] \geq \colors^2 \gamma.
\]
Then there exist $c,d \in [\colors]$ with $c\prec d$ such that the upper neighbor $\hamlevel_{dc} \triangleright \hamlevel$ satisfies
\begin{equation} \label{eqn:robust-kk-interm2}
	\Pr_{\bv \sim \pi_{\hamlevel_{dc}}}\bracks*{\bv \in \bdry_{\hamlevel_{dc}}A~|~\bv_i = d} - \Pr_{\bu \sim \pi_{\hamlevel}}\bracks*{\bu\in A~|~\bu_i = c} \geq \gamma
\end{equation}
for $i=1$ or $2$.
\end{lemma}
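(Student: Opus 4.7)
The plan is a direct pigeonhole and counting argument. By the definition of influence,
\[
\colors^2 \gamma \leq \Inf_{(1\;2)}[A] = \Pr_{\bu \sim \pi_\hamlevel}\bracks*{\bu \in A,\ \bu^{(1\;2)} \notin A} = \sum_{e_1 \neq e_2 \in [\colors]} \Pr_{\bu \sim \pi_\hamlevel}\bracks*{\bu \in A,\ \bu^{(1\;2)} \notin A,\ \bu_1 = e_1,\ \bu_2 = e_2},
\]
so pigeonhole over the $\colors(\colors-1) \leq \colors^2$ ordered color pairs yields some $(e_1, e_2)$ for which
\[
F := \braces*{u \in \slice\hamlevel : u \in A,\ u^{(1\;2)} \notin A,\ u_1 = e_1,\ u_2 = e_2}
\]
satisfies $|F|/|\slice\hamlevel| \geq \gamma$. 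Let $\{c, d\} = \{e_1, e_2\}$ with $c \prec d$, and set $i = 1$ if $e_1 = d$, otherwise $i = 2$; thus every $u \in F$ has $u_i = d$ and $u_{3-i} = c$. These are the $c, d, i$ we use to verify~(\ref{eqn:robust-kk-interm2}).

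Consider next the natural bijection $\phi_i : \{u \in \slice\hamlevel : u_i = c\} \to \{v \in \slice{\hamlevel_{dc}} : v_i = d\}$ that replaces the $i$th coordinate of $u$ by $d$. Both sides have cardinality $\hamlevel_c \cdot |\slice\hamlevel|/n$, so the two conditional probabilities in~(\ref{eqn:robust-kk-interm2}) have a common denominator. Moreover, $\phi_i$ embeds $\{u' \in A : u'_i = c\}$ into $\{v \in \bdry_{\hamlevel_{dc}} A : v_i = d\}$, since $\phi_i(u') \succ u'$; by itself this already gives the trivial inequality~$\geq 0$.

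The extra $\gamma$ comes from a second, disjoint family of shadow elements: for each $u \in F$, produce $v$ by flipping the $(3-i)$th coordinate of $u$ from $c$ to $d$. Then $u \prec v$ shows $v \in \bdry_{\hamlevel_{dc}} A$; $v_i = d$ since $u_i = d$; and $\phi_i^{-1}(v) = u^{(1\;2)}$, which lies outside $A$ by definition of $F$, so $v \notin \phi_i(\{u' \in A : u'_i = c\})$. The map $u \mapsto v$ is clearly injective on $F$ (the image has $d$'s in positions $1, 2$ and agrees with $u$ elsewhere), so combining the two disjoint contributions gives
\[
|\{v \in \bdry_{\hamlevel_{dc}} A : v_i = d\}| \geq |\{u' \in A : u'_i = c\}| + |F|.
\]
Dividing by the common denominator $\hamlevel_c \cdot |\slice\hamlevel|/n$ yields~(\ref{eqn:robust-kk-interm2}) with slack $\Pr[F] \cdot n/\hamlevel_c \geq \gamma$, using only $\hamlevel_c \leq n$. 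I do not foresee a serious obstacle: the argument is purely combinatorial, and the hypotheses $\eps \leq \hamlevel_c/n \leq 1-\eps$ and $\colors = O(1)$ enter only through the $\colors^2$ pigeonhole factor (the $\eps$ bounds themselves are essentially unused).
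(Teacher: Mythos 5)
Your proof is correct, and it is essentially the same argument as the paper's, recast from a probabilistic conditioning argument into a direct counting argument. Both proofs pigeonhole over the $\leq\colors^2$ ordered color pairs in positions $1,2$, both observe that the bijection $\phi_i$ (changing the $i$th coordinate from $c$ to $d$) maps $A\cap\{u_i=c\}$ into $\bdry_{\hamlevel_{dc}}A\cap\{v_i=d\}$, and both obtain the extra $\gamma$ from the strings in the pigeonholed set: for $u$ with $u_i=d,\,u_{3-i}=c$, $u\in A$, $u^{(1\;2)}\notin A$, the string $v$ (obtained by replacing both positions $1,2$ by $d$) lies in the shadow but its preimage $\phi_i^{-1}(v)=u^{(1\;2)}$ is outside $A$, so $v$ is disjoint from the image of $\phi_i$. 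Your bijective formulation, with the explicit ``common denominator'' $\hamlevel_c|\slice\hamlevel|/n$, is arguably a bit cleaner than the paper's relabeling step (``We consider the case that $c\prec d$ \ldots; the other case is similar''), since you choose $i$ explicitly in terms of which pigeonhole cell fired; but the underlying idea and witness construction are identical.
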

\begin{proof}
Draw $\bu \sim \pi_\kappa$, and write $\bu = (\bu_1, \bu') = (\bu_1,\bu_2,\bw)$ with $\bw \in [\colors]^{n-2}$. We have
\[
\Inf_{(1\;2)}[A] \geq \Pr[(\bu_1,\bu_2,\bw)\notin A \land (\bu_2,\bu_1,\bw) \in A] \geq \colors^2\gamma.
\]
Since there are $\colors^2$ choices of colors for $\bu_1$ and $\bu_2$, it must be true that for some choice of $c,d \in [\colors]$,
\[
\Pr[\bu_1 = c \land \bu_2 = d \land (c,d,\bw) \notin A \land (d,c,\bw) \in A] \geq \gamma.
\]
We consider the case that $c \prec d$ and reach the case of $i=1$ in the conclusion; the other case is similar.

If $(d,c,\bw) \in A$ then $(d,d,\bw) \in \bdry_{\kappa_{dc}} A$.
Therefore
\[
\Pr[\bu_1 = c \land \bu_2 = d \land (c,d,\bw) \notin A \land (d,d,\bw) \in \bdry_{\kappa_{dc}}A] \geq \gamma.
\]
Every $\bu$ in the event above also satisfies $\bu \notin A \land (d,\bu') \in \bdry_{\kappa_{dc}}A$, so
\[
\Pr[\bu_1 = c \land \bu \notin A \land (d,\bu') \in \bdry_{\kappa_{dc}}A] \geq \gamma,
\]
and thus
\[
\Pr[\bu \notin A \land (d,\bu') \in \bdry_{\kappa_{dc}}A ~|~ \bu_1 = c] \geq \frac{\gamma}{\kappa_c/n} \geq \gamma.
\]

Finally, let $\bu' \sim \pi_{\kappa'}$, where $\kappa'$ is obtained from $\kappa$ by removing one $c$.
If $(c,\bu') \in A$ then $(d,\bu') \in \bdry_{\kappa_{dc}}A$, and so
\[
\Pr_{\bv' \sim \pi_{\kappa'}}\bracks*{(d,\bv') \in \bdry_{\kappa_{dc}}A} - \Pr_{\bu' \sim \pi_{\kappa'}}\bracks*{(c,\bu')\in A} = \Pr_{\bu' \sim \pi_{\kappa'}} \bracks*{(d,\bu') \in \bdry_{\kappa_{dc}} A \land (c,\bu') \notin A}.
\]
Now $(c,\bu')$ is uniformly distributed on $\slice{\hamlevel}$ conditioned on $\bu_1 = c$, so
\begin{align*}
\Pr_{\bu' \sim \pi_{\kappa'}} \bracks*{(d,\bu') \in \bdry_{\kappa_{dc}} A \land (c,\bu') \notin A} = \Pr_{\bu \sim \pi_{\kappa}} \bracks*{\bu \notin A \land (d,\bu') \in \bdry_{\kappa_{dc}} A ~|~ \bu_1 = c} \geq \gamma,
\end{align*}
and also
\[
\Pr_{\bu' \sim \pi_{\kappa'}}\bracks*{(c,\bu') \in A} = \Pr_{\bu \sim \pi_{\hamlevel}}\bracks*{\bu\in A~|~\bu_i = c}.
\]
Meanwhile, $(d,\bu')$ is uniformly distributed on $\slice{\hamlevel_{dc}}$ conditioned on $\bu_1 = d$, and so
\[
\Pr_{\bv' \sim \pi_{\kappa'}}\bracks*{(d,\bv') \in \bdry_{\kappa_{dc}}A} = \Pr_{\bv \sim \pi_{\hamlevel_{dc}}}\bracks*{\bv \in \bdry_{\hamlevel_{dc}}A~|~\bv_i = d}. \qedhere
\]
\end{proof}

\begin{proof}[Proof of~\Cref{thm:kkl-robust-better-simple}]
Apply \Cref{thm:kkl-robust} with $\delta = .02$, and assume that \Cref{eqn:robust-kk-hypothesis} does not hold. The theorem shows that $\Inf_\tau[A] \geq 1/n^{.02}$ for some transposition $\tau$, which without loss of generality is $\tau = (1\;2)$. We can therefore apply \Cref{lem:robust-kk-2} (with $\gamma = 1/(\ell^2 n^{.02})$), obtaining two colors $c \prec d$, which without loss of generality satisfy the conclusion of the lemma for $i=1$:
\begin{equation} \label[ineq]{eqn:robust-kk-2-cons}
\vol_d(\bdry_{\kappa_{dc}}A)\geq \frac{1}{\ell^2 n^{.02}} + \vol_c(A).
\end{equation}
We will show that $A$ is correlated to the first coordinate and the colors $c$ and $d$.

Since \Cref{eqn:robust-kk-hypothesis} does not hold, we can apply \Cref{lem:robust-kk-1} (with $\eta = O(\log n/n)$) to obtain
\[
	\Pr[\bv_1 = d]\vol_d(\bdry_{\kappa_{dc}} A) +\Pr[\bv_1 = c]\vol_c(\bdry_{\kappa_{dc}} A) - \Pr[\bu_1 = d]\vol_d(A) - \Pr[\bu_1 = c]\vol_c(A) \leq \frac{1}{\eps^2} \eta.
\]
Combining \Cref{eqn:robust-kk-2-cons} with $\Pr[\bv_1 = d] \geq \ep$ and $\vol_c(\bdry_{\kappa_{dc}}A) \geq \vol_c(A)$, we deduce
\begin{align*}
\Pr[\bu_1 = d]\vol_d(A) &\geq \Pr[\bv_1 = d]\vol_d(\bdry_{\kappa_{dc}} A) + \Pr[\bv_1 = c]\vol_c(\bdry_{\kappa_{dc}} A) - \Pr[\bu_1 = c]\vol_c(A) - \frac{1}{\eps^2} \eta\\
&\geq \frac{\eps}{\colors^2 n^{.02}} + (\Pr[\bv_1 = d] + \Pr[\bv_1 = c] - \Pr[\bu_1 = c])\vol_c(A) - \frac{1}{\eps^2} \eta \\
&\geq \Pr[\bu_1 = d]\vol_c(A) + \frac{\eps}{\colors^2 n^{.02}} - \frac{1}{\eps^2} \eta.
\end{align*}
Dividing by $\Pr[\bu_1 = d]$, we conclude that
\[
\corr_{1,c,d}[A] = \vol_d(A) - \vol_c(A) \geq \frac{\eps}{\colors^2 n^{.02}} - \frac{1}{\eps^2} \eta.
\]
Since $\eta = O(\log n/n)$, $\ell = O(1)$, and $\eps = \Omega(1)$, for large enough $n$, $\corr_{1,c,d}[A] \geq \frac{1}{n^{.01}}$.
\end{proof}

\ignore{
XXXXXXXXXXXXOld version of this Theorem is belowXXXXXXXXXXX

\begin{proposition}
Let $A \subseteq \slice\hamlevel$ and let $\eta,\gamma > 0$ such that
\begin{equation}\label[ineq]{eqn:corr-kk-hypothesis}
    \mu_{\hamlevel'} (\bdry_{\hamlevel'} A) - \mu_\hamlevel (A) \leq \eta
\end{equation}
for every upper neighbor $\hamlevel'$ of $\hamlevel$, and
\[
    \Inf_{(i\ j)} (\bone_A) \geq \colors^2\gamma.
\]
Then there exist $c_1 \prec c_2 \in [\colors]$ such that
\[
\max(\corr_{i,c_1,c_2}(A),\corr_{j,c_1,c_2}(A)) \geq
\frac{\gamma}{\hamlevel_{c_1}} - \frac{\eta}{\hamlevel_{c_2}}.
\]
\end{proposition}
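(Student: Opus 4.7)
The plan is to combine the two hypotheses---the influence lower bound on the transposition $(i\;j)$ and the uniform shadow-growth bound $\eta$---to expose a single pair $c_1 \prec c_2$ and a coordinate (either $i$ or $j$) witnessing the advertised correlation. After relabeling, take $(i,j) = (1,2)$.

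First I would apply pigeonhole to the decomposition $\Inf_{(1\;2)}[A] = \sum_{a,b\in[\ell]} p_{ab}$, where $p_{ab} = \Pr[\bu_1=a,\ \bu_2=b,\ \bone_A(\bu)\neq\bone_A(\bu^{(1\;2)})]$. Same-color terms vanish, so some off-diagonal $(a,b)$, together with one orientation of the $A$-disagreement, has
\[
    \Pr[\bu_1=a,\ \bu_2=b,\ \bu\notin A,\ \bu^{(1\;2)}\in A] \;\gtrsim\; \gamma.
\]
Let $\{c_1,c_2\} = \{a,b\}$ with $c_1 \prec c_2$. The two orientations $(a,b)=(c_1,c_2)$ and $(a,b)=(c_2,c_1)$ will produce the correlation witness on coordinate $1$ or coordinate $2$ respectively; I describe the former.

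Next I would translate the event into a shadow statement at the specific upper neighbor $\kappa_{c_2 c_1} \triangleright \kappa$ (one more $c_2$, one less $c_1$). The event forces $(c_1, c_2, \bw) \notin A$ and $(c_2, c_1, \bw) \in A$, and since $(c_2, c_1, \bw) \preceq (c_2, c_2, \bw)$ in the coordinatewise order, the latter string belongs to $\bdry_{\kappa_{c_2 c_1}} A$. Coupling $\bu = (c_1, \bx)$ with $\bv = (c_2, \bx)$ via a common $\bx$ uniform on $\slice{\kappa \setminus c_1} = \slice{\kappa_{c_2 c_1} \setminus c_2}$ converts the event probability into
\[
    \vol_{c_2}(\bdry_{\kappa_{c_2 c_1}} A) - \vol_{c_1}(A) \;\gtrsim\; \gamma n/\kappa_{c_1},
\]
where I write $\vol_c(B) = \Pr[\bz \in B \mid \bz_1 = c]$.

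Finally, apply the shadow-growth hypothesis $\vol(\bdry_{\kappa_{c_2 c_1}}A) - \vol(A) \leq \eta$ and decompose by the first coordinate. The elementary per-color monotonicity $\vol_c(\bdry_{\kappa_{c_2 c_1}}A) \geq \vol_c(A)$ for every color $c$ (obtained by promoting a random non-first $c_1$-position to a $c_2$, a map that sends $A$ into $\bdry_{\kappa_{c_2 c_1}} A$ and is measure-preserving onto $\slice{\kappa_{c_2 c_1}}$ conditioned on $\bv_1 = c$) lets me discard every term but the $c_2$-term and the correlation itself, leaving
\[
    (\kappa_{c_2}+1)\bigl(\vol_{c_2}(\bdry_{\kappa_{c_2 c_1}} A) - \vol_{c_2}(A)\bigr) + \corr_{1,c_1,c_2}[A] \;\leq\; n\eta.
\]
Eliminating $\vol_{c_2}(\bdry_{\kappa_{c_2 c_1}} A)$ between this inequality and the display of the previous paragraph gives $\corr_{1,c_1,c_2}[A] \gtrsim \gamma/\kappa_{c_1} - \eta/\kappa_{c_2}$, as claimed; the other orientation $(a,b) = (c_2,c_1)$ yields the analogous bound for coordinate $2$. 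The main obstacle is the bookkeeping---arranging for both pigeonhole orientations to furnish the same sorted pair $c_1 \prec c_2$, and verifying that each discarded term in the first-coordinate shadow decomposition genuinely carries the right sign---rather than any substantially new inequality.
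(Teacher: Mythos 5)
Your argument is correct, and it follows essentially the same route as the paper's. (For context: the proposition you were handed sits only inside an \texttt{\textbackslash ignore} block in the source, explicitly marked as the ``old version''; the live replacements are \Cref{lem:robust-kk-1}, \Cref{lem:robust-kk-2} and \Cref{thm:kkl-robust-better-simple}. Your middle paragraphs correspond, in order, to the pigeonhole and first-coordinate coupling of \Cref{lem:robust-kk-2}, and to the first-coordinate decomposition with per-color monotonicity of \Cref{lem:robust-kk-1}, assembled as in \Cref{thm:kkl-robust-better-simple}. The commented-out proof itself stops after the decomposition step and defers the coupling and elimination to~\cite{OW13a}.)

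One substantive remark worth recording. Your bookkeeping actually proves a conclusion that is a factor of $n$ stronger than the proposition states, and that is almost certainly what was intended. Your coupling step yields the honest inequality
\[
\vol_{c_2}(\bdry_{\hamlevel_{c_2c_1}}A) - \vol_{c_1}(A) \ \ge\ \gamma\,n/\hamlevel_{c_1}
\]
(the $\gtrsim$ you wrote can be a $\ge$), and eliminating $\vol_{c_2}(\bdry_{\hamlevel_{c_2c_1}}A)$ against your display
\[
(\hamlevel_{c_2}+1)\bigl(\vol_{c_2}(\bdry_{\hamlevel_{c_2c_1}}A) - \vol_{c_2}(A)\bigr) + \corr_{1,c_1,c_2}[A] \ \le\ n\eta
\]
gives $\corr_{1,c_1,c_2}[A] \ge \frac{\gamma n}{\hamlevel_{c_1}} - \frac{n\eta}{\hamlevel_{c_2}}$, not $\frac{\gamma}{\hamlevel_{c_1}} - \frac{\eta}{\hamlevel_{c_2}}$. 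The weaker displayed bound would be useless in the intended application (with $\gamma \sim n^{-.02}$, $\eta \sim (\log n)/n$, $\hamlevel_c = \Theta(n)$ it gives only $\corr = \Omega(n^{-1.02})$), so the missing $n$'s are a typo in the abandoned draft; what you derived is the correct form. A minor accounting slip: by the paper's convention $\Inf_{(1\;2)}[A] = \Pr[\bu\in A,\ \bu^{(1\;2)}\notin A]$ is \emph{half} the total disagreement probability, so your identity $\Inf_{(1\;2)}[A] = \sum_{a,b} p_{ab}$ should be $2\Inf_{(1\;2)}[A] = \sum_{a,b} p_{ab}$; this is harmless, since after pigeonhole over the $\le\colors^2$ off-diagonal pairs and choosing an orientation you still land at a probability of at least $\gamma$.
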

\begin{proof}
We follow the proof of Proposition~3.6 in~\cite{OW13a}.

Assume without loss of generality that $i = 1$ and $j = 2$. Draw $\bu \sim \slice\hamlevel$ uniformly, and write $\bu = (\bu_1,\bu_2,\bw)$ with $w \in [\colors]^{n-2}$. We have
\[
\Inf_{(1\;2)}[\bone_A] = \Pr[\bu_1 \ne \bu_2 \land \bone_A(\bu_1,\bu_2,\bw) \ne \bone_A(\bu_2,\bu_1,\bw)] \geq \colors^2\gamma,
\]
and we can partition the event on the right as disjoint events of the form
\[
\bu_1 \ne \bu_2 \land (c_1,c_2,\bw) \notin A \land (c_2,c_1,\bw)\in A
\]
for every choice of $c_1 \ne c_2 \in [\colors]$. One of these must occur with probability at least $2\gamma$. If $c_1 < c_2$, we lower bound $\corr_{1,c_1,c_2}(A)$, otherwise we lower bound $\corr_{2,c_2,c_1}(A)$.

Let $\hamlevel'$ be $\hamlevel$ with one $c_1$ changed to $c_2$. By hypothesis \Cref{eqn:corr-kk-hypothesis}, if we draw $\bv \sim \slice{\hamlevel'}$ and $\bu \sim \slice\hamlevel$, and let $\bv_1$ and $\bu_1$ be the marginals on the first coordinate respectively, then
\[
\sum_{c \in [\colors]} \Pr[\bv_1 = c] \Pr[\bv \in \bdry_{\hamlevel'}A\,|\,\bv_1=c] - \sum_{c \in [\colors]} \Pr[\bu_1 = c]\Pr[\bu \in A \,|\, \bu_1 = c] \leq \eta.
\]
For all $c \in [\colors]\setminus \{c_1,c_2\}$,
\[
\Pr[\bu \in A \,|\, \bu_1 = c] \leq \Pr[\bv \in \bdry_{\hamlevel'}A \,|\, \bv_1 = c],
\]
since if we sample $(\bu|\bu_1 = c)$ from $\slice\hamlevel$ and then choose a random coordinate where $\bu$ is $c_1$ and change it to a $c_2$, we get the distribution $(\bv|\bv_1 = c)$. Furthermore, $\Pr[\bu_1 = c] = \Pr[\bv_1 = c]$. Therefore,
\[
\sum_{c \in \{c_1,c_2\}} \Pr[\bv_1 = c] \Pr[\bv \in \bdry_{\hamlevel'}A\,|\,\bv_1=c] - \sum_{c \in \{c_1,c_2\}} \Pr[\bu_1 = c]\Pr[\bu \in A \,|\, \bu_1 = c] \leq \eta.
\]
Now the proof continues exactly as in~\cite{OW13a}, using $c_1$ and $c_2$ in place of $0$ and $1$, respectively.
\end{proof}}

\subsection{Harmonic analysis on the symmetric group, and Friedgut on the multislice} \label{sec:constant-degree}
In this section we will recap some aspects of harmonic analysis on the symmetric group and on the multislice, paying particular attention to the notion of the ``low-degree'' components of a function.  For more details, see e.g.~\cite{Dia88}.

First, we briefly discuss partitions. A \emph{partition} $\lambda$ of $n$ is a nonincreasing sequence of positive integers summing to $n$. (Equivalently, it is a sorted histogram $\kappa$; i.e., one with $\kappa_1 \geq \kappa_2 \geq \cdots \geq \kappa_\colors$.)  We write $\lambda \vdash n$.  Sometimes we extend $\lambda$ into an infinite sequence, by padding it with infinitely many zeroes. We say that $\lambda$ \emph{dominates} or \emph{majorizes} $\mu$, written $\lambda \unrhd \mu$, if for all $i \geq 1$ the inequality $\lambda_1 + \cdots + \lambda_i \geq \mu_1 + \cdots + \mu_i$ holds.

Though we will eventually be interested in functions on multislices, we begin by studying the larger vector space~$V$ of functions $f\colon \symm{n} \to \R$ on the symmetric group.  Note that we can naturally extend the operators $\Ker$, $\Lap$, $\Heat_t$ to this space~$V$.  The partitions $\lambda$ of $n$ index the \emph{irreducible representations} of the symmetric group~$\symm{n}$.  In particular this means that $V$ has an orthogonal decomposition
\[
    V = \{f \mid f \colon \symm{n} \to \R\} = \bigoplus_{\lambda \vdash n} V^\lambda,
\]
where the \emph{isotypic component} $V^\lambda$ corresponds to the irreducible representations~$\lambda$ (counted with multiplicity). In analogy with the level/degree decomposition on the Boolean cube, we denote the orthogonal projection of $f$ onto $V^\lambda$ by $f^{=\lambda}$.

One utility of this decomposition is that $V^\lambda$ is an eigenspace for the operator~$\Ker$, with eigenvalue equal to $\wh{\chi}_\lambda(\tau)$, the normalized character evaluated at a(ny) transposition $\tau \in \transpos{n}$.   Frobenius~\cite{Fro00} determined an explicit formula for these character values:
\begin{equation} \label{eq:frobenius-ker}
 \Ker f
 = \sum_{\lambda \vdash n} c_\lambda f^{=\lambda}, \text{ where } c_\lambda = \frac{1}{n(n-1)}\sum_{i=1}^\colors [\lambda_i^2 - (2i-1)\lambda_i].
\end{equation}
See \cite[Cor.~1 \& Lem.~7]{DS81} for an explicit proof of the above.  Immediate consequences of the above formula are the following:
\begin{align} \label{eq:frobenius-lap}
 \Lap f &= \sum_\lambda d_\lambda f^{=\lambda}, \text{ where } d_\lambda = 1 - c_\lambda; \\
\label{eq:frobenius-Ht}
 \Heat_tf &= e^{-t \Lap}f = \sum_\lambda e^{-td_\lambda} f^{=\lambda}.
\end{align}
From \Cref{eq:frobenius-Ht} we can see that $\Heat_t$ is an invertible operator for all $t \geq 0$, and that it is natural to write $\Heat_{t}^{-1} = \Heat_{-t}$.

An important feature of the formula for $c_\lambda$ (and hence $d_\lambda$) is its relation to majorization order.  The following  simple calculation was observed in, e.g.,~\cite[Lem.~10]{DS81}:
\begin{lemma} \label{lem:eigenvalue-order}
If $\lambda \unrhd \mu$ then $c_\lambda > c_\mu$ and hence $d_\lambda < d_\mu$.
\end{lemma}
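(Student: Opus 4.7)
My plan is to reduce the claim to a single-step calculation via the well-known fact that the dominance order on partitions is generated by \emph{elementary raising operations}: whenever $\lambda \unrhd \mu$ with $\lambda \neq \mu$, one can find a finite chain $\mu = \mu^{(0)} \lhd \mu^{(1)} \lhd \cdots \lhd \mu^{(k)} = \lambda$, where each step $\mu^{(t-1)} \lhd \mu^{(t)}$ consists of moving a single box from some row $j$ to some (strictly earlier) row $i < j$, in a way that keeps the partition shape. (In fact one can even take $j = i+1$ at each step, but I will not need this.) So it suffices to prove strict monotonicity under one such move, then chain the inequalities and apply $d_\lambda = 1 - c_\lambda$.

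Concretely, suppose $\lambda$ differs from $\mu$ only by $\lambda_i = \mu_i + 1$ and $\lambda_j = \mu_j - 1$, with $i < j$. Using the formula from \Cref{eq:frobenius-ker}, a direct calculation gives
\[
    n(n-1)\bigl(c_\lambda - c_\mu\bigr) = \bigl[(\mu_i+1)^2 - \mu_i^2\bigr] + \bigl[(\mu_j-1)^2 - \mu_j^2\bigr] - (2i-1) + (2j-1) = 2\bigl(\mu_i - \mu_j + 1 + (j-i)\bigr).
\]
Since $\mu$ is a partition and $i < j$, we have $\mu_i \geq \mu_j$ and $j - i \geq 1$, so the bracket is at least $2$, and $c_\lambda - c_\mu \geq \tfrac{4}{n(n-1)} > 0$.

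Chaining this over the sequence $\mu = \mu^{(0)} \lhd \cdots \lhd \mu^{(k)} = \lambda$ yields $c_\mu < c_{\mu^{(1)}} < \cdots < c_\lambda$, hence $c_\lambda > c_\mu$. The statement $d_\lambda < d_\mu$ is then immediate from $d_\lambda = 1 - c_\lambda$. I do not anticipate a real obstacle here: the only nontrivial ingredient is the combinatorial fact about dominance order being generated by single-box raisings, which is a standard result (see e.g.\ Brylawski or Macdonald); the rest is bookkeeping on the explicit formula for $c_\lambda$.
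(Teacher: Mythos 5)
Your proof is correct, and it supplies the standard argument that the paper itself only cites (to Diaconis--Shahshahani, Lemma~10) without reproducing: reduce to a single raising move $\mu \mapsto \mu + e_i - e_j$ ($i<j$) via the well-known fact that dominance is generated by such moves, and then check directly from the content-sum formula \Cref{eq:frobenius-ker} that $n(n-1)(c_\lambda - c_\mu) = 2\bigl(\mu_i - \mu_j + (j-i) + 1\bigr) \geq 4 > 0$, using $\mu_i \geq \mu_j$ and $j - i \geq 1$. The computation and the chaining are both sound, so this is essentially the intended proof.
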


From this we may immediately determine the spectral gap\footnote{Although the spectral gap is usually notated $\lambda_1$, we avoid this notation here due to confusion with the standard notation~$\lambda$ for partitions.} of the transposition chain on $\symm{n}$, which is achieved at $\lambda = (n-1,1)$.

\begin{corollary} \label{cor:spectral-gap}
    The minimal nontrivial eigenvalue of operator $\Lap$ on~$V$ is $\frac{2}{n-1}$.
\end{corollary}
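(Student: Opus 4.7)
My plan is to read the eigenvalues of $\Lap$ off the isotypic decomposition and then identify which partition minimizes $d_\lambda$ among $\lambda \neq (n)$.

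First, from \Cref{eq:frobenius-lap} we see that the spectrum of $\Lap$ on $V$ is precisely $\{d_\lambda : \lambda \vdash n\}$, each eigenvalue appearing with multiplicity $\dim V^\lambda$. The trivial eigenvalue $0$ is achieved by $\lambda = (n)$: plugging into the Frobenius formula gives $c_{(n)} = \frac{n^2 - n}{n(n-1)} = 1$, hence $d_{(n)} = 0$. So the nontrivial eigenvalues are $\{d_\lambda : \lambda \vdash n,\ \lambda \neq (n)\}$, and our goal is to show the minimum of this set is $\frac{2}{n-1}$, achieved at $\lambda = (n-1,1)$.

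By \Cref{lem:eigenvalue-order}, the map $\lambda \mapsto d_\lambda$ is strictly \emph{reverse}-monotone with respect to dominance: larger $\lambda$ yields smaller $d_\lambda$. Therefore the partition $\lambda \neq (n)$ minimizing $d_\lambda$ must be a dominance-maximal element of $\{\lambda \vdash n : \lambda \neq (n)\}$. I claim the unique such element is $(n-1,1)$, i.e., every $\mu \vdash n$ with $\mu \neq (n)$ satisfies $\mu \unlhd (n-1,1)$. Indeed, $\mu \neq (n)$ forces $\mu_1 \leq n-1$, which is the first dominance inequality; the second, $\mu_1 + \mu_2 \leq n$, is automatic since $\mu$ sums to $n$; and all subsequent partial sums of $(n-1,1)$ equal $n$, so they trivially dominate the partial sums of $\mu$. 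Combined with \Cref{lem:eigenvalue-order}, this shows $d_\mu \geq d_{(n-1,1)}$ for every nontrivial $\mu$.

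Finally, a direct calculation using the Frobenius formula for $\lambda = (n-1,1)$:
\[
 c_{(n-1,1)} = \frac{1}{n(n-1)}\bigl[((n-1)^2 - (n-1)) + (1 - 3)\bigr] = \frac{(n-1)(n-2) - 2}{n(n-1)} = \frac{n-3}{n-1},
\]
so $d_{(n-1,1)} = 1 - \frac{n-3}{n-1} = \frac{2}{n-1}$, which completes the proof. There is no real obstacle here; the content is entirely bookkeeping on top of \Cref{eq:frobenius-lap} and \Cref{lem:eigenvalue-order}, with the one small observation being that $(n-1,1)$ is the unique cover of $(n)$ in dominance order.
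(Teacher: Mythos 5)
Your proof is correct and follows exactly the paper's intended route: it combines \Cref{lem:eigenvalue-order} with the observation that $(n-1,1)$ is the unique dominance-maximal element among nontrivial partitions, then evaluates the Frobenius formula at $(n-1,1)$ to get $d_{(n-1,1)} = \frac{2}{n-1}$. The paper states this corollary without elaboration ("From this we may immediately determine the spectral gap... achieved at $\lambda = (n-1,1)$"), and your write-up simply supplies the bookkeeping the paper leaves implicit.
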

As we explain shortly, given $\lambda$ we will be particularly interested in the parameter $k = n - \lambda_1$.  An immediate consequence of \Cref{lem:eigenvalue-order} is that we can determine the minimal and maximal value of~$d_\lambda$ in terms of this parameter~$k$. We skip the straightforward calculations (most of which appear in~\cite[Ch.~3D, Lem.~2]{Dia88}):
\begin{corollary} \label{cor:extremal-eigenvalues}
For $\lambda \vdash n$ and $\lambda_1 = n-k$, we have
\[
    \frac{k}{n-1} \leq d_\lambda \leq \frac{2k}{n-1}.
\]
The upper bound has equality if $\lambda =  (n-k,1,\ldots,1)$.  Further, if $k \leq n/2$ we have
\[
    d_\lambda \geq \parens*{1 - \frac{k-1}{n}} \frac{2k}{n-1},
\]
with equality if $\lambda = (n-k,k)$.
\end{corollary}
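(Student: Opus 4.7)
The plan is to combine Frobenius's explicit formula \Cref{eq:frobenius-lap} for $d_\lambda$ with the majorization monotonicity of \Cref{lem:eigenvalue-order}. The key combinatorial observation is that among partitions $\lambda \vdash n$ with fixed $\lambda_1 = n-k$, the hook $(n-k, 1^k)$ is the unique minimum in dominance order, while $(n-k, k)$ is the unique maximum, provided the latter is a legal partition (i.e., $k \leq n-k$).

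For the upper bound $d_\lambda \leq 2k/(n-1)$, I would plug $\lambda = (n-k, 1^k)$ directly into Frobenius's formula. The relevant sum telescopes cleanly: $\sum_{i=2}^{k+1}(1-(2i-1)) = -k(k+1)$, so $(n-k)(n-k-1) - k(k+1) = n(n-2k-1)$, yielding $c_{(n-k,1^k)} = (n-2k-1)/(n-1)$ and hence $d_{(n-k,1^k)} = 2k/(n-1)$. By \Cref{lem:eigenvalue-order}, any $\lambda$ with $\lambda_1 = n-k$ dominates $(n-k, 1^k)$, and therefore $d_\lambda \leq 2k/(n-1)$. For the sharper lower bound in the regime $k \leq n/2$, I would analogously plug $\lambda = (n-k, k)$ into the formula to compute $d_{(n-k,k)} = (2k/(n-1))\bigl(1 - (k-1)/n\bigr)$. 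Since $(n-k,k) \unrhd \lambda$ whenever $\lambda_1 = n-k$ and $k \leq n-k$ (both share $\lambda_1$, and the former places all remaining mass in the second row), \Cref{lem:eigenvalue-order} supplies the lower bound.

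The main obstacle is the unconditional lower bound $d_\lambda \geq k/(n-1)$, which must hold even when $k > n/2$ — a regime where $(n-k,k)$ is not a legal partition and majorization no longer furnishes a clean extremal comparator. Here I plan to prove the bound directly. Using $k = n - \lambda_1$ and $n = \sum_i \lambda_i$, multiplying through by $n(n-1)$ and rearranging reduces the desired inequality to
\[
    \sum_i \lambda_i \bigl[(\lambda_1 - \lambda_i) + 2(i-1)\bigr] \geq 0,
\]
which is manifestly true term-by-term since $\lambda_i \geq 0$, $\lambda_1 \geq \lambda_i$, and $i \geq 1$. This argument handles all $k$ uniformly, so no case split between $k \leq n/2$ and $k > n/2$ is needed for the $k/(n-1)$ bound itself; the majorization-based argument in the previous paragraph then produces the improvement to $(1 - (k-1)/n) \cdot 2k/(n-1)$ exactly when $(n-k,k)$ is a valid partition.
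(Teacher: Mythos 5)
Your proof is correct, and I verified all the computations. The paper itself gives no proof — it just writes ``we skip the straightforward calculations'' and defers to Diaconis~\cite[Ch.~3D, Lem.~2]{Dia88} — so there is no in-paper argument to compare against, but your approach (Frobenius's formula plus the majorization monotonicity of \Cref{lem:eigenvalue-order}) is surely what the citation has in mind. Specifically: the computation $c_{(n-k,1^k)} = (n-2k-1)/(n-1)$ giving $d_{(n-k,1^k)} = 2k/(n-1)$ checks out, as does $d_{(n-k,k)} = 2k(n-k+1)/(n(n-1)) = \bigl(1 - \frac{k-1}{n}\bigr)\frac{2k}{n-1}$; the observation that $(n-k,1^k)$ is dominance-minimal (resp.\ $(n-k,k)$ dominance-maximal when $k \le n-k$) among partitions with first part $n-k$ is correct; and the rearrangement of the unconditional bound to $\sum_i \lambda_i\bigl[(\lambda_1 - \lambda_i) + 2(i-1)\bigr] \ge 0$ is a nice touch, since it sidesteps the fact that for $k > n/2$ the dominance-maximal partition with first row $n-k$ is no longer $(n-k,k)$ and would be messier to describe. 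Two minor presentational notes: \Cref{lem:eigenvalue-order} as stated in the paper literally says $\lambda \unrhd \mu$ implies \emph{strict} inequality, which only makes sense if $\unrhd$ there is read as strict dominance (else it is false for $\lambda=\mu$); your argument implicitly handles the equality case separately, which is fine. Also, the hook $(n-k,1^k)$ requires $n-k\ge 1$, i.e.\ $k\le n-1$, which holds automatically since $\lambda_1 \ge 1$.
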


Why consider the parameter $k = n - \lambda_1$? It turns out that this parameter is very much analogous to ``Fourier degree'' for functions on the Boolean cube, as the following result (proved in, e.g.,~\cite[Thm.~7]{EFP11}) shows:
\begin{theorem}                                     \label{thm:sn-degree}
    Let $f\colon \symm{n} \to \R$ be a nonzero function.  The \emph{degree} of~$f$ is the least $k \in \N$ such that $f$ can be represented as a linear combination of~``$k$-juntas'' (meaning functions $g$ such that $g(\pi)$ depends only on some $k$ values $\pi(j_1), \dots, \pi(j_k)$).  It is also equal to the least~$k$ such that $f^{=\lambda} = 0$ for all $\lambda$ with $n - \lambda_1 > k$.
\end{theorem}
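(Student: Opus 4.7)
The plan is to prove that the space $J_k \subseteq V$ of $\R$-linear combinations of $k$-juntas equals $\bigoplus_{\lambda : n - \lambda_1 \leq k} V^\lambda$; the theorem then follows immediately by comparing this to the definition of degree. My approach exploits that $J_k$ carries a natural $\symm{n} \times \symm{n}$ bimodule structure, which together with Young's rule pins it down completely.

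First I would observe that $J_k$ is invariant under the actions of $\symm{n}$ on $V$ by both left- and right-translation of the argument. If $f(\pi) = g(\pi(j_1), \ldots, \pi(j_k))$, then $\pi \mapsto f(\sigma\pi)$ is a $k$-junta on the same coordinates (with outputs composed with $\sigma$), while $\pi \mapsto f(\pi\sigma)$ is a $k$-junta on the shifted coordinates $\sigma(j_1), \ldots, \sigma(j_k)$. Hence $J_k$ is a sub-$\symm{n} \times \symm{n}$-bimodule of the regular representation $V$. Since $V = \bigoplus_\lambda V^\lambda$ with each $V^\lambda$ irreducible as a $\symm{n} \times \symm{n}$ bimodule (by Peter--Weyl / Artin--Wedderburn), every sub-bimodule of $V$ is a sum of $V^\lambda$'s, so $J_k = \bigoplus_{\lambda \in T_k} V^\lambda$ for some set $T_k$ of partitions of $n$.

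To identify $T_k$, I would examine the subspace $W_k \subseteq J_k$ of functions depending only on $\pi(1), \ldots, \pi(k)$. These are exactly the functions on $\symm{n}$ invariant under right-multiplication by $\symm{\{k+1, \ldots, n\}} \cong \symm{n-k}$, so as a left $\symm{n}$-module $W_k = \mathrm{Ind}_{\symm{n-k}}^{\symm{n}} \mathbf{1} = M^{(n-k, 1^k)}$ (the Young permutation module on ordered $k$-subsets of $[n]$). Young's rule then gives $M^{(n-k, 1^k)} \cong \bigoplus_\lambda K_{\lambda, (n-k, 1^k)}\, S^\lambda$, where the Kostka number $K_{\lambda, (n-k, 1^k)}$ is positive iff $\lambda \unrhd (n-k, 1^k)$. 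A short combinatorial check shows this majorization condition is equivalent to $\lambda_1 \geq n - k$: the only nontrivial inequalities $\lambda_1 + \cdots + \lambda_i \geq (n-k) + (i-1)$ for $i \leq k+1$ hold automatically once $\lambda_1 \geq n - k$, because for $i$ within the number of parts of $\lambda$ the extra parts contribute at least $1$ each, and beyond that the partial sum is already $n$.

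Finally I would conclude: bimodule-irreducibility of each $V^\lambda$ forces any $V^\lambda$ meeting $J_k$ nontrivially to sit entirely inside $J_k$, and the previous step shows $V^\lambda \cap W_k \neq 0$ for every $\lambda$ with $\lambda_1 \geq n - k$; thus $\{\lambda : \lambda_1 \geq n - k\} \subseteq T_k$. Conversely, by coordinate symmetry $J_k = \sum_{|J|=k} W_J$ where each $W_J$ is left-$\symm{n}$-isomorphic to $W_k$, so $J_k$ contains no $V^\lambda$ outside the isotypic support of $M^{(n-k, 1^k)}$. Therefore $T_k = \{\lambda : n - \lambda_1 \leq k\}$, which yields the theorem. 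I do not foresee any real obstacle: each ingredient (the bimodule decomposition of the regular representation, Young's rule for $M^{(n-k, 1^k)}$, and the majorization/Kostka check) is standard; the main care needed is in correctly tracking the left and right regular actions when identifying $W_k$ as the induced permutation module.
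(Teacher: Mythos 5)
The paper does not give its own proof of this statement; it cites it to Ellis--Friedgut--Pilpel \cite{EFP11}. So the relevant question is just whether your self-contained argument is correct, and it is.

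Your three ingredients all hold up. (i) $J_k$ is indeed a sub-bimodule of the regular representation under both left and right translation, and because every irreducible representation of $\symm{n}$ is realizable over $\Q$ (Schur index~$1$), Artin--Wedderburn gives $\R[\symm{n}] \cong \prod_\lambda \mathrm{Mat}_{d_\lambda}(\R)$, so each $V^\lambda$ is a simple bimodule and any sub-bimodule is a sum of $V^\lambda$'s. This is the one point where you should be slightly careful to note that you are over~$\R$, not $\C$, and that the argument still works precisely because $\symm{n}$'s irreps are real. (ii) The identification of $W_k$ (functions depending only on $\pi(1),\dots,\pi(k)$) as right-$\symm{\{k+1,\dots,n\}}$-invariants, hence as $M^{(n-k,1^k)} = \mathrm{Ind}_{\symm{n-k}}^{\symm{n}}\mathbf{1}$ as a left module, is correct. (iii) The majorization check that $\lambda \unrhd (n-k,1^k)$ iff $\lambda_1 \geq n-k$ is correct: $\lambda_1 \geq n-k$ forces $\lambda$ to have at most $k+1$ parts, after which $\lambda_1 + \cdots + \lambda_i \geq (n-k)+(i-1)$ for $2 \leq i \leq k+1$ follows from either each extra part contributing at least~$1$, or the partial sum having already reached~$n$. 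The bridging steps are also sound: $W_k \subseteq J_k$ plus bimodule-simplicity of $V^\lambda$ gives $\{\lambda : \lambda_1 \geq n-k\} \subseteq T_k$, and writing $J_k = \sum_{|J|=k} W_J$ with each $W_J$ a right-translate of $W_k$ shows $J_k$ has no left-isotypic components outside those of $M^{(n-k,1^k)}$, giving the reverse inclusion. Your proof is a clean representation-theoretic argument in the spirit of the cited source; the bimodule framing is a nice touch that keeps you from having to track multiplicities by hand.
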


We now provide two simple applications of \Cref{cor:extremal-eigenvalues} concerning functions of bounded degree:
\begin{lemma} \label{lem:total-influence-ub}
    If $f\colon \symm{n} \to \R$ has degree at most~$k$, then $\Inf[f] \leq kn \|f\|_2^2$.
\end{lemma}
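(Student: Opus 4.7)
The plan is to combine the spectral decomposition from \Cref{eq:frobenius-lap} with the eigenvalue bound of \Cref{cor:extremal-eigenvalues} and the identity $\Inf[f] = \binom{n}{2} \calE[f]$ from the preliminaries.

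First I would expand $f$ in the orthogonal decomposition $f = \sum_{\lambda \vdash n} f^{=\lambda}$. By the characterization of degree in \Cref{thm:sn-degree}, the hypothesis that $f$ has degree at most~$k$ means that $f^{=\lambda} = 0$ whenever $n - \lambda_1 > k$. Combining this with \Cref{eq:frobenius-lap} and Parseval, one gets
\[
    \calE[f] = \la f, \Lap f\ra = \sum_{\substack{\lambda \vdash n \\ n - \lambda_1 \leq k}} d_\lambda \, \|f^{=\lambda}\|_2^2.
\]

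Next I would apply the upper bound $d_\lambda \leq \frac{2(n-\lambda_1)}{n-1} \leq \frac{2k}{n-1}$ from \Cref{cor:extremal-eigenvalues}, which is valid for every surviving $\lambda$ in the sum above. This yields
\[
    \calE[f] \leq \frac{2k}{n-1} \sum_{\lambda \vdash n} \|f^{=\lambda}\|_2^2 = \frac{2k}{n-1}\|f\|_2^2.
\]

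Finally, using $\Inf[f] = \binom{n}{2} \calE[f]$ gives
\[
    \Inf[f] \leq \binom{n}{2} \cdot \frac{2k}{n-1}\|f\|_2^2 = kn\|f\|_2^2,
\]
as required. There is no real obstacle here: the whole argument is a direct unpacking of the spectral picture assembled in the preceding paragraphs, with \Cref{cor:extremal-eigenvalues} doing the only nontrivial work.
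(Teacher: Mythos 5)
Your proof is correct and takes essentially the same approach as the paper: both expand $f$ in the isotypic decomposition restricted to $\lambda$ with $n-\lambda_1 \le k$, apply the upper bound $d_\lambda \le \frac{2k}{n-1}$ from \Cref{cor:extremal-eigenvalues}, and multiply by $\binom{n}{2}$. The only difference is cosmetic — the paper writes the surviving partitions as $\lambda \unrhd (n-k,1,\dots,1)$ rather than $n-\lambda_1 \le k$, but these are the same condition.
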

\begin{proof}
Using \Cref{eq:frobenius-lap} and \Cref{cor:extremal-eigenvalues},
\begin{equation*}
\Inf[f] = \tbinom{n}{2} \langle f,\Lap f \rangle =  \tbinom{n}{2} \sum_{\lambda \unrhd (n-k, 1, \dots, 1)} d_\lambda \norm{f^{=\lb}}_2^2 \leq \sum_{\lambda \unrhd \hamlevel} kn \norm{f^{=\lb}}_2^2 = kn\norm{f}_2^2. \qedhere
\end{equation*}
\end{proof}
\begin{lemma} \label{lem:Ht-bound}
If $f\colon \symm{n} \to \R$ has degree at most~$k$, then for all $t \geq 0$,
\[
 \|\Heat_tf\|_2 \geq e^{-2kt/(n-1)} \|f\|_2,
\]
and for all $t \leq 0$,
\[
 \|\Heat_tf\|_2 \leq e^{-2kt/(n-1)} \|f\|_2.
\]
\end{lemma}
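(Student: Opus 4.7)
My plan is to reduce the bound to a pointwise inequality on eigenvalues, using the orthogonal decomposition of $V$ into isotypic components. By \Cref{thm:sn-degree}, the hypothesis that $f$ has degree at most $k$ means $f^{=\lambda} = 0$ unless $n - \lambda_1 \leq k$. Combining this with \Cref{eq:frobenius-Ht} and the orthogonality of the decomposition (giving a Plancherel-type identity), I would write
\[
    \|\Heat_t f\|_2^2 = \sum_{\lambda : n-\lambda_1 \leq k} e^{-2t d_\lambda} \|f^{=\lambda}\|_2^2, \qquad \|f\|_2^2 = \sum_{\lambda : n-\lambda_1 \leq k} \|f^{=\lambda}\|_2^2.
\]
So it suffices to establish, for every $\lambda$ contributing to the sum, the inequality $e^{-2t d_\lambda} \geq e^{-4kt/(n-1)}$ when $t \geq 0$, and the reverse inequality when $t \leq 0$.

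The key input is the upper bound $d_\lambda \leq 2k/(n-1)$ from \Cref{cor:extremal-eigenvalues}, valid for all $\lambda$ with $n - \lambda_1 \leq k$. For $t \geq 0$, multiplying by $-2t \leq 0$ reverses the inequality, giving $-2t d_\lambda \geq -4kt/(n-1)$, and exponentiating yields $e^{-2t d_\lambda} \geq e^{-4kt/(n-1)}$, as needed. For $t \leq 0$, multiplying by $-2t \geq 0$ preserves the inequality, giving $-2t d_\lambda \leq -4kt/(n-1)$, and exponentiating yields the opposite bound. Taking square roots of the resulting inequality on $\|\Heat_t f\|_2^2$ recovers the two statements of the lemma.

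The argument is essentially routine once the decomposition and eigenvalue bound are in hand; there is no real obstacle. The only mild subtlety to be careful about is the handling of signs when $t \leq 0$ (so that $\Heat_t = \Heat_{-|t|}^{-1}$ is the inverse heat operator, which is well-defined by \Cref{eq:frobenius-Ht} since no eigenvalue is zero on the range of $f$). One should simply note that the spectral formula \Cref{eq:frobenius-Ht} makes perfect sense for negative $t$ on the finite-dimensional space $V$, so no additional justification beyond the eigenvalue comparison is required.
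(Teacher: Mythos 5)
Your proof is correct and follows essentially the same route as the paper: project onto isotypic components using \Cref{eq:frobenius-Ht}, apply the eigenvalue bound $d_\lambda \leq 2k/(n-1)$ from \Cref{cor:extremal-eigenvalues} to each $\lambda$ with $n-\lambda_1 \leq k$, and exponentiate. The only difference is cosmetic: the paper indexes the sum by the condition $\lambda \unrhd (n-k,1,\ldots,1)$ (equivalent to $\lambda_1 \geq n-k$) and dispatches the $t \leq 0$ case with a one-line remark, whereas you spell out the sign manipulations explicitly.
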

\begin{proof}
\Cref{cor:extremal-eigenvalues} shows that for $t \geq 0$,
\[
 \|\Heat_tf\|_2^2 = \sum_{\lambda \unrhd (n-k,1,\ldots,1)} e^{-2td_\lambda} \|f^{=\lambda}\|_2^2 \geq e^{-4kt/(n-1)} \|f\|_2^2.
\]
The bound for $t \leq 0$ follows in a similar fashion.
\end{proof}

We now move on to discussing functions on the multislice.  Let $\kappa \in \N_+^\ell$ be a histogram of size~$n$.  By relabeling the colors we may assume $\kappa_1 \geq \kappa_2 \geq \cdots \geq \kappa_\ell$ and hence that $\kappa \vdash n$. Let us denote by $u_0 \in [\ell]^n$ the following \emph{canonical string}:
\[
    u_0 = \underbrace{11\cdots 1}_{\substack{\kappa_1 \\ \text{times}}}\underbrace{22\cdots 2}_{\substack{\kappa_2 \\ \text{times}}} \cdots \underbrace{\ell \ell \cdots \ell}_{\substack{\kappa_\ell \\ \text{times}}}\,.
\]
Note that as $\pi$ runs over all permutations in $\symm{n}$, the string $u_0^\pi$ runs over all strings in $\slice\hamlevel$, with equal multiplicity $\kappa_1! \kappa_2! \cdots \kappa_\ell!$.  In this way, each function $f$ in the permutation module~$M^\kappa$ (i.e., the multislice $\slice\hamlevel$ considered as a representation of $\symm{n}$) can be naturally identified with a ``pullback'' function $\ol{f} \in V$, via $\ol{f}(\pi) = f(u_0^\pi)$.  Conversely, the functions $g \in V$ that correspond to functions on the multislice $\slice\hamlevel$ are precisely those that are invariant to the action of the \emph{Young subgroup} $\symm{\kappa_1} \times \cdots \times \symm{\kappa_\ell}$.
Classical results in the representation theory of the symmetric group show
that this subspace has the following isotypic decomposition:
\begin{equation}    \label{eqn:M-breakdown}
    M^\hamlevel = \bigoplus_{\lambda \unrhd \kappa} V^\lambda_\kappa,
\end{equation}
where $V^\lambda_\kappa$ is (isomorphic to) a nonzero subspace of $V^\lambda$ (specifically, $V^\lambda_\kappa$ consists of $K_{\lambda\kappa}$ copies of the irrep associated to~$\lambda$, where $K_{\lambda\kappa}$ is the \emph{Kostka number}.
Since this decomposition always includes $V^{(n-1,1)}_\kappa \leq V^{(n-1,1)}$ (unless $\kappa = (n)$), we conclude:
\begin{corollary} \label{cor:spectral-gap-multislice}
    The minimal nontrivial eigenvalue of the operator $\Lap$ on any~$M^\kappa$ (for $\kappa \neq (n)$) is also $\frac{2}{n-1}$.
\end{corollary}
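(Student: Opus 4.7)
The plan is to read off the minimal nontrivial eigenvalue directly from the isotypic decomposition~\Cref{eqn:M-breakdown}, using the facts about~$d_\lambda$ already collected in the preceding lemmas.

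First, recall that $\Lap$ acts on $V^\lambda$ as the scalar $d_\lambda$ (by~\Cref{eq:frobenius-lap}), and since each $V^\lambda_\kappa$ is contained in $V^\lambda$, $\Lap$ acts on $V^\lambda_\kappa$ as the same scalar $d_\lambda$. So the spectrum of $\Lap$ on $M^\kappa$ is exactly $\{d_\lambda : \lambda \unrhd \kappa,\ V^\lambda_\kappa \neq 0\}$, and the decomposition~\Cref{eqn:M-breakdown} says that every $\lambda \unrhd \kappa$ contributes a nonzero summand. The trivial eigenvalue $0$ comes from $\lambda = (n)$ (constant functions), so the nontrivial eigenvalues are $\{d_\lambda : \lambda \unrhd \kappa,\ \lambda \neq (n)\}$.

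Next, I would show that $(n-1,1)$ is in the decomposition. Since $\kappa \neq (n)$, we have $\kappa_1 \leq n-1$, so $(n-1,1) \unrhd \kappa$: the first partial sum satisfies $n-1 \geq \kappa_1$, and every subsequent partial sum of $(n-1,1)$ equals $n$, which dominates any partial sum of~$\kappa$. Hence $V^{(n-1,1)}_\kappa \neq 0$ appears in~\Cref{eqn:M-breakdown}, and \Cref{cor:extremal-eigenvalues} (with $k = 1$) gives $d_{(n-1,1)} = \tfrac{2}{n-1}$, producing an eigenvalue equal to~$\tfrac{2}{n-1}$.

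Finally, I would argue this is the minimum. Any $\lambda \neq (n)$ satisfies $\lambda_1 \leq n-1$, and for all $i \geq 2$ the partial sum $\lambda_1 + \cdots + \lambda_i \leq n = (n-1) + 1 + 0 + \cdots = (n-1,1)_1 + \cdots + (n-1,1)_i$; thus $(n-1,1) \unrhd \lambda$. By \Cref{lem:eigenvalue-order}, $d_\lambda \geq d_{(n-1,1)} = \tfrac{2}{n-1}$ for every $\lambda \neq (n)$ in the decomposition. Combining with the previous paragraph, the minimal nontrivial eigenvalue is exactly~$\tfrac{2}{n-1}$, matching \Cref{cor:spectral-gap}. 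There is essentially no obstacle here: the whole proof is a three-line consequence of \Cref{eqn:M-breakdown}, \Cref{lem:eigenvalue-order}, and \Cref{cor:extremal-eigenvalues}, and the only thing to verify is the dominance comparison $(n-1,1) \unrhd \lambda$ for all $\lambda \neq (n)$, which is immediate.
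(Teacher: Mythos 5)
Your proposal is correct and takes essentially the same route as the paper: the paper's one-line proof observes that $V^{(n-1,1)}_\kappa$ appears in the decomposition \Cref{eqn:M-breakdown} whenever $\kappa \neq (n)$, and then invokes \Cref{cor:spectral-gap} for the lower bound on nontrivial eigenvalues, whereas you spell out both halves explicitly (checking $(n-1,1) \unrhd \kappa$, computing $d_{(n-1,1)} = 2/(n-1)$, and re-deriving via \Cref{lem:eigenvalue-order} that $(n-1,1) \unrhd \lambda$ for every $\lambda \neq (n)$ so that $d_\lambda \geq d_{(n-1,1)}$). The only substantive difference is that you redo the easy argument behind \Cref{cor:spectral-gap} in situ rather than citing it, which changes nothing mathematically.
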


We now define the notion of ``degree'' for functions on multislices:
\begin{definition}
    Let $f\colon \slice{\kappa} \to \R$ be a nonzero function.  The \emph{degree} of~$f$ is the least $k \in \N$ such that $f$ can be represented as a linear combination of~``$k$-juntas'' (functions $g$ such that $g(u)$ depends only on some $k$ values $u_{j_1}, \dots, u_{j_k}$).  It is also equal to the least~$k$ such that $f^{=\lambda} = 0$ for all $\lambda$ with $n - \lambda_1 > k$ (in $f$'s decomposition as in \Cref{eqn:M-breakdown}).
\end{definition}
\begin{claim}
    The two definitions of ``degree'' above are indeed the same.
\end{claim}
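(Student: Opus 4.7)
The plan is to reduce to the symmetric-group version of the theorem, \Cref{thm:sn-degree}, via the pullback correspondence $f \mapsto \overline{f}$ identifying $M^\kappa$ with the subspace of functions in $V$ that are invariant under the left action of the Young subgroup $Y = \symm{\kappa_1} \times \cdots \times \symm{\kappa_\ell}$. Two preliminary observations do most of the work: (i)~the pullback is $\symm{n}$-equivariant for the right regular action (a direct check using $(u_0^\pi)^\sigma = u_0^{\pi\sigma}$), so it intertwines isotypic projections, meaning $\overline{f^{=\lambda}}$ is the $V^\lambda$-component of $\overline{f}$ (which automatically lies in $V^\lambda_\kappa$ since $\overline{f}$ is left-$Y$-invariant); and (ii)~a $k$-junta $h$ on $\slice\kappa$ depending on coordinates $j_1,\dots,j_k$ pulls back to $\overline{h}(\pi)=h((u_0)_{\pi(j_1)},\dots,(u_0)_{\pi(j_k)})$, which is a $k$-junta on $\symm{n}$ in the sense of \Cref{thm:sn-degree}.

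Given these, the direction ``junta-degree $\le k$ implies spectral-degree $\le k$'' is immediate: if $f = \sum_i c_i h_i$ with each $h_i$ a $k$-junta on the multislice, then $\overline{f} = \sum_i c_i \overline{h_i}$ is a combination of $k$-juntas on $\symm{n}$, so by~\Cref{thm:sn-degree} we have $\overline{f}^{=\lambda} = 0$ and hence $f^{=\lambda} = 0$ for all $\lambda$ with $n-\lambda_1 > k$.

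The substantive direction is the converse. Suppose $f^{=\lambda} = 0$ whenever $n-\lambda_1 > k$; then $\overline{f}$ has symmetric-group degree at most~$k$, and \Cref{thm:sn-degree} supplies a decomposition $\overline{f} = \sum_i c_i g_i$ with each $g_i$ a $k$-junta on $\symm{n}$. The key move is to symmetrize, replacing each $g_i$ with its left-$Y$-average $g_i'(\pi) = |Y|^{-1}\sum_{\sigma \in Y} g_i(\sigma\pi)$; because $\overline{f}$ is already left-$Y$-invariant, averaging preserves the identity $\overline{f} = \sum_i c_i g_i'$. A direct check then shows that each $g_i'$ remains a $k$-junta on the same coordinates $j_1,\dots,j_k$ (each summand $g_i(\sigma\pi)$ depends only on $\sigma(\pi(j_1)),\dots,\sigma(\pi(j_k))$, which are functions of $\pi(j_1),\dots,\pi(j_k)$) and is left-$Y$-invariant by construction. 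Combining these two properties, $g_i'(\pi)$ depends on $\pi(j_1),\dots,\pi(j_k)$ only through their $Y$-orbits, equivalently through the colors $(u_0^\pi)_{j_1},\dots,(u_0^\pi)_{j_k}$, so $g_i'$ descends to a genuine $k$-junta $h_i'$ on $\slice\kappa$ depending on $u_{j_1},\dots,u_{j_k}$. This exhibits $f = \sum_i c_i h_i'$ as a linear combination of $k$-juntas on the multislice.

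I do not anticipate a serious obstacle; the only delicate point is recognizing that for a left-$Y$-invariant $k$-junta on $\symm{n}$, the junta property sharpens to depending on the relevant image coordinates \emph{only through their colors}, which is exactly the multislice notion of a $k$-junta. The symmetrization trick is therefore essential, since an arbitrary $k$-junta on $\symm{n}$ need not respect the color blocks and hence need not descend to the multislice on its own.
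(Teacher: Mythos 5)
Your proof is correct and takes essentially the same approach as the paper: both directions go through the pullback $f \mapsto \overline{f}$, the easy direction is identical, and the substantive direction in both cases amounts to averaging over the Young subgroup $Y$ (the paper phrases this as averaging $\overline{f}$ over each fiber $\{\pi : u_0^\pi = v\}$, which is a left $Y$-coset, and reduces to a specific indicator; you symmetrize each junta summand $g_i$ directly). Your more structural formulation --- symmetrize, observe the result is simultaneously a $k$-junta and left-$Y$-invariant, then conclude it depends on $\pi(j_1),\dots,\pi(j_k)$ only through their colors --- is a clean way to package the same underlying idea, and it sidesteps the explicit computation of the averaged indicator that the paper carries out.
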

\begin{proof}
    If $g \in M^\kappa$ is a $k$-junta, it's easy to see that its pullback $\overline{g}\colon S_n \to \R$ is a $k$-junta.  Thus if $f \in M^\kappa$ is a linear combination of $k$-juntas, so too is its pullback $\overline{f}\colon S_n \to \R$.  From \Cref{thm:sn-degree} we get that $\overline{f}^{= \lambda} = 0$ for all $\lambda$ with $n - \lambda_1 > k$ and so the same is true of $f^{= \lambda}$.

    In the other direction, if $f^{= \lambda} = 0$ for all $\lambda$ with $n - \lambda_1 > k$, the same is true of $\overline{f}^{= \lambda}$, and hence $\overline{f}$ is a linear combination of $k$-juntas (by \Cref{thm:sn-degree} again).  We need to show that $f$ is also a linear combination of $k$-juntas.  By linearity, it suffices to assume that $\overline{f}$ is itself a $k$-junta; indeed, it further suffices to assume $\overline{f}$ is of the form $\overline{f}(\pi) = \ind[\pi(i_1) = j_1, \dots, \pi(i_k) = j_k]$ for some coordinates $i_1, \dots, i_k, j_1, \dots, j_k \in [n]$.  By definition we have $f(v) = \overline{f}(\pi)$ for any $\pi \in \symm{n}$ such that $u_0^\pi = v$.  In particular, it equals the average of $\overline{f}(\pi)$ over all such~$\pi$; i.e.,
    \[
        f(v) = \E_{\substack{\bpi \in \symm{n} \\ u_0^{\bpi} = v}}[\ol{f}(\bpi)] = \Pr_{\substack{\bpi \in \symm{n} \\ u_0^{\bpi} = v}}[\pi(i_1) = j_1, \dots, \pi(i_k) = j_k] = \ind[v_{i_1}=j_1,\dots,v_{i_k}=j_k].
    \]
    This means that $f$ is indeed a $k$-junta on~$\slice\kappa$.
\end{proof}
An immediate consequence is the following:
\begin{corollary}   \label{cor:its-okay}
    \Cref{lem:total-influence-ub,lem:Ht-bound} hold equally well for functions $f \in M^\hamlevel$ of degree at most~$k$.
\end{corollary}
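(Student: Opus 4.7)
The plan is to reduce the corollary to the already-proved symmetric-group statements via the pullback embedding $f \mapsto \overline{f}$ defined earlier by $\overline{f}(\pi) = f(u_0^\pi)$. My goal is to verify that this embedding respects each ingredient on both sides of the two lemmas: the $L^2$ norm, the degree, the Laplacian (hence $\Inf$), and the heat operator $\Heat_t$.

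First I would check that pullback is an $L^2$-isometry: because $u_0^{\bpi}$ is uniformly distributed on $\slice{\hamlevel}$ when $\bpi \sim \symm n$, we have $\|\overline{f}\|_2 = \|f\|_2$. Next, using the identity $u_0^{\pi\tau} = (u_0^\pi)^\tau$ (immediate from the right-action convention $(u^\sigma)_j = u_{\sigma(j)}$ fixed in the introduction), I would show $\Ker \overline{f} = \overline{\Ker f}$, and consequently that $\Lap$ and $\Heat_t$ also intertwine along the embedding. In particular $\Inf[f] = \binom{n}{2}\langle f, \Lap f\rangle = \binom{n}{2}\langle \overline{f},\Lap \overline{f}\rangle = \Inf[\overline{f}]$ and $\|\Heat_t f\|_2 = \|\Heat_t \overline{f}\|_2$. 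Finally, pullback preserves degree: this is exactly the content of the claim proved immediately above this corollary.

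Given these compatibilities, the corollary is immediate: if $f \in M^\hamlevel$ has degree at most~$k$, then $\overline{f}$ is a degree-at-most-$k$ function on $\symm n$ with the same $L^2$-norm, the same total influence, and the same heat-kernel $L^2$-norm as~$f$. Applying \Cref{lem:total-influence-ub} to $\overline{f}$ therefore gives $\Inf[f] = \Inf[\overline{f}] \leq kn\|\overline{f}\|_2^2 = kn\|f\|_2^2$, and applying \Cref{lem:Ht-bound} to $\overline{f}$ gives the corresponding upper and lower bounds on $\|\Heat_t f\|_2 = \|\Heat_t \overline{f}\|_2$ for $t \leq 0$ and $t \geq 0$ respectively.

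There is no real obstacle: the only substantive step is the intertwining $\Ker \overline{f} = \overline{\Ker f}$, which is a one-line calculation once one observes $u_0^{\pi\tau} = (u_0^\pi)^\tau$. Alternatively, one could bypass pullback entirely and simply rerun the proofs of the two lemmas inside $M^\hamlevel$ using the isotypic decomposition $M^\hamlevel = \bigoplus_{\lambda \unrhd \hamlevel} V^\lambda_\hamlevel$, since $\Lap$ acts on each $V^\lambda_\hamlevel \subseteq V^\lambda$ with the same eigenvalue~$d_\lambda$ that appears in \Cref{eq:frobenius-lap}; both routes arrive at the same inequalities.
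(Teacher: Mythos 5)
Your proof is correct and matches the paper's intent: the corollary is left as an "immediate consequence" of the Claim precisely because, via the pullback $f \mapsto \overline{f}$ (an $L^2$-isometry intertwining $\Ker$, hence $\Lap$ and $\Heat_t$, and preserving degree by the Claim), the bounds of \Cref{lem:total-influence-ub,lem:Ht-bound} transfer verbatim. The alternative you mention at the end — rerunning the spectral computations directly inside $M^\hamlevel$ using $M^\hamlevel = \bigoplus_{\lambda \unrhd \kappa} V^\lambda_\kappa$ with $V^\lambda_\kappa \subseteq V^\lambda$ — is the same argument in different clothing and is equally valid.
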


Finally, we relate the main theorem in our paper to the comparison of norms for low-degree functions on the multislice:
\begin{lemma} \label{lem:hypercontractivity-bounded-degree}
Fix a histogram $\hamlevel \in \N_+^\ell$ and let $p = \min_i \hamlevel_i/n$.  Suppose that $f \in M^\hamlevel$ has degree~$k$.  Then for all finite $q \geq 2$:
\begin{align*}
 \|f\|_q &\leq (q-1)^{\Theta(k\log(1/p))} \|f\|_2, \\
 \|f\|_2 &\leq (q-1)^{\Theta(k\log(1/p))} \|f\|_{q'},
\end{align*}
where $q'$ is given by $1/q + 1/q' = 1$.
\end{lemma}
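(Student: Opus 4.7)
The plan is to bootstrap from the general hypercontractivity statement of \Cref{thm:hypercontractivity} by ``inverting the noise'' $\Heat_t$ on the low-degree space, exactly as in the Boolean cube setting. For the forward inequality, I would choose $t = \ln(q-1)/(2\varrho_\hamlevel)$ and write $f = \Heat_t (\Heat_{-t} f)$; applying the first bound of \Cref{thm:hypercontractivity} to the function $\Heat_{-t} f$ (which still lies in $M^\hamlevel$) yields
\[
    \|f\|_q = \|\Heat_t (\Heat_{-t} f)\|_q \leq \|\Heat_{-t} f\|_2.
\]
Since $f$ has degree at most $k$, so does $\Heat_{-t} f$, and the $t \leq 0$ case of \Cref{lem:Ht-bound} (valid on $M^\hamlevel$ by \Cref{cor:its-okay}) gives $\|\Heat_{-t} f\|_2 \leq e^{2kt/(n-1)} \|f\|_2$.

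For the reverse inequality, apply the second bound of \Cref{thm:hypercontractivity} directly to get $\|\Heat_t f\|_2 \leq \|f\|_{q'}$, and use the $t \geq 0$ case of \Cref{lem:Ht-bound} in the form $\|f\|_2 \leq e^{2kt/(n-1)} \|\Heat_t f\|_2$ to obtain $\|f\|_2 \leq e^{2kt/(n-1)} \|f\|_{q'}$.

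It remains to bound the exponent $2kt/(n-1) = k \ln(q-1) \cdot \varrho_\hamlevel^{-1}/(n-1)$. Here \Cref{thm:main} takes over: it gives
\[
    \varrho_\hamlevel^{-1} \leq n \sum_{i=1}^\ell \tfrac12 \log_2(4n/\hamlevel_i) \leq \tfrac12 n \ell \log_2(4/p),
\]
since every nonempty $\hamlevel_i$ satisfies $\hamlevel_i/n \geq p$. Thus $2kt/(n-1) = O(k \log(1/p)) \cdot \ln(q-1)$, absorbing the $\ell$ factor into the $\Theta(\cdot)$ (as is standard in the $\ell = O(1)$ regime for which these applications are written). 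Exponentiating converts $e^{O(k \log(1/p)) \ln(q-1)}$ into $(q-1)^{\Theta(k \log(1/p))}$, matching both claimed bounds.

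The step I expect to be the main (minor) subtlety is the inversion of $\Heat_{-t}$: one must verify that $\Heat_{-t} f$ really lies in $M^\hamlevel$ and still has degree at most $k$, which follows from $\Heat_t$ acting diagonally on the isotypic decomposition \Cref{eqn:M-breakdown} with all eigenvalues positive, so $\Heat_{-t}$ is a well-defined bounded operator preserving each $V^\lambda_\hamlevel$ (in particular preserving the degree filtration). Once this is in hand, the rest is a mechanical substitution of \Cref{thm:main} into the exponent.
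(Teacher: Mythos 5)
Your proof is correct and follows essentially the same route as the paper: take $t = \ln(q-1)/(2\varrho_\hamlevel)$, apply \Cref{thm:hypercontractivity} to $g = \Heat_{-t}f$, use \Cref{lem:Ht-bound} (via \Cref{cor:its-okay}) to compare $\|\Heat_{-t}f\|_2$ with $\|f\|_2$, and substitute the bound on $\varrho_\hamlevel^{-1}$ from \Cref{thm:main}. The only cosmetic difference is that you make the absorption of the $\ell$ factor into the $\Theta(\cdot)$ explicit, whereas the paper simply writes $\varrho_\hamlevel^{-1} = \Theta(n\log(1/p))$.
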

\begin{proof}
 \Cref{thm:main} shows that $\varrho_\hamlevel^{-1} = \Theta(n\log(1/p))$. \Cref{thm:hypercontractivity} therefore shows that $\|\Heat_tg\|_q \leq \|g\|_2$ and $\|\Heat_tg\|_2 \leq \|g\|_{q'}$ for all  $g \in M^\hamlevel$, where
\[
 t = \frac{\ln(q-1)}{2\varrho_{\hamlevel}} = \Theta(\ln(q-1) \cdot n \log(1/p)).
\]
Applying this to $g = \Heat_t^{-1} f$ (which has the same degree as~$f$) and using \Cref{lem:Ht-bound} (and \Cref{cor:its-okay}), we deduce
\[
 \|f\|_q \leq \|\Heat_{-t}f\|_2 \leq e^{2tk/(n-1)} \|f\|_2 = (q-1)^{\Theta(k\log (1/p))} \|f\|_2,
\]
and similarly for the second claimed inequality.
\end{proof}

We end this section by providing an analogue of Friedgut's Junta Theorem~\cite{Fri98} for functions on multislices:
\begin{theorem}\label{thm:friedgut-multislice}
Let $f \colon \slice\hamlevel \to \{0,1\}$  be such that $\Inf[f] \leq Kn$.  Write $p_i = \kappa_i/n$.  Then for every $\ep > 0$ there exists $h\colon \slice\hamlevel \to \{0,1\}$ depending on at most $\parens*{\frac{1}{p_1 p_2 \cdots p_\colors}}^{O(K/\eps)}$ coordinates such that \mbox{$\Pr_{\bu \sim \pi_\kappa}[f(\bu) \ne h(\bu)] \leq \ep$}.
\end{theorem}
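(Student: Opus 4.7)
The plan is to follow Friedgut's original strategy~\cite{Fri98}, substituting the hypercontractive inequality of Lemma~\ref{lem:hypercontractivity-bounded-degree} for Bonami--Beckner on the Boolean cube. First I would truncate $f$ at degree $d \coloneqq \lceil 16K/\ep \rceil$. By Corollary~\ref{cor:extremal-eigenvalues}, $d_\lambda \geq (n-\lambda_1)/(n-1)$, so
\[
\Inf[f] = \binom{n}{2}\sum_\lambda d_\lambda \|f^{=\lambda}\|_2^2 \geq \frac{n(d+1)}{2}\|f^{>d}\|_2^2,
\]
giving $\|f^{>d}\|_2^2 \leq 2K/(d+1) \leq \ep/8$. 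Set $g = f^{\leq d}$; the same calculation shows $\Inf[g] \leq \Inf[f]$.

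Next, isolate the influential coordinates. Define $I_j[g] = \frac{1}{n-1}\sum_{j' \neq j}\Inf_{(j\;j')}[g]$ and observe that $\sum_j I_j[g] = \frac{2\Inf[g]}{n-1} \leq 3K$. For a threshold $\tau$ to be chosen, set $J = \{j : I_j[g] \geq \tau\}$, so $|J| \leq 3K/\tau$. Let $g_J(u) = \E[g(\bv) \mid v_J = u_J]$ denote the canonical $J$-junta approximation. The goal is to bound
\[
\|g - g_J\|_2^2 = \E_{u_J}\bracks*{\Var_{u_{[n]\setminus J}}[g(u) \mid u_J]}
\]
by $\ep/8$ while keeping $|J|$ independent of $n$. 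For each fixing of $u_J$, the residual function $g(\cdot, u_J)$ is a degree-$\leq d$ function on the residual multislice, so Lemma~\ref{lem:hypercontractivity-bounded-degree} applies. Combining the conditional Poincar\'e inequality on this residual multislice with the hypercontractive bound yields the $n$-free threshold $\tau = \ep \cdot (p_1\cdots p_\colors)^{O(d)}$, which delivers $|J| \leq (1/(p_1\cdots p_\colors))^{O(K/\ep)}$. Finally, rounding $h(u) \coloneqq \ind[g_J(u) \geq 1/2]$ produces a genuine $J$-junta with $\Pr[f \neq h] \leq 4\|f - g_J\|_2^2 \leq \ep$, using that $f$ is Boolean.

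The main obstacle is the threshold analysis in the previous paragraph. A naive application of Poincar\'e alone would force $\tau = O(\ep/n)$ and hence a junta size scaling with $n$; using hypercontractivity to bypass this is analogous to Friedgut's original argument on the Boolean cube, where the Fourier identity $\Inf_j[f^{\leq d}] = \sum_{S \ni j,\,|S|\leq d} \hat f(S)^2$ combined with $(4/3,2)$-hypercontractivity implies that contributions from coordinates with $\Inf_j$ below threshold are super-linearly small. Adapting this to the multislice requires working with the action of $\symm{n-|J|}$ on the residual coordinates in place of the subset-indexed Fourier basis, establishing a bound of the form ``contribution of $j \notin J$'' $\leq M^{O(d)} I_j[g]^{1+\alpha}$ for some $\alpha > 0$ and $M = (p_1\cdots p_\colors)^{-O(1)}$, and then summing over $j \notin J$.
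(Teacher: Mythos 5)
Your high-level plan---degree truncation, isolating influential coordinates, and using hypercontractivity in place of Bonami--Beckner---is the right intuition, and your first two steps are fine: the eigenvalue bound from Corollary~\ref{cor:extremal-eigenvalues} does give $\|f^{>d}\|_2^2 \leq 2K/(d+1)$, and $\sum_j I_j[g] = \frac{2}{n-1}\Inf[g] \leq 3K$ is correct. But the critical step (bounding $\|g - g_J\|_2^2$ via an $n$-independent threshold $\tau$) is exactly where the slice and multislice differ fundamentally from the Boolean cube, and you do not actually establish it.

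On $\{0,1\}^n$ Friedgut's argument hinges on the Fourier identity $\Inf_j[g^{\leq d}] = \sum_{S \ni j,\,|S|\leq d}\hat g(S)^2$, which lets one charge the error $\|g - g_J\|_2^2 = \sum_{S\not\subseteq J}\hat g(S)^2$ to individual small influences and then invoke hypercontractivity on the derivative. No such subset-indexed decomposition exists on $\slice{\hamlevel}$: the isotypic decomposition is indexed by partitions, the conditional-expectation projection $g \mapsto g_J$ does not act diagonally on it, and there is no a priori reason that ``all transpositions disjoint from $J$ have small influence on a degree-$d$ function'' implies ``$g$ is $\ell_2$-close to a $J$-junta.'' You flag this (``establishing a bound of the form contribution of $j\notin J$ $\leq M^{O(d)} I_j[g]^{1+\alpha}$'') but treat it as a routine adaptation. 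It is not: this is precisely the technical heart that Wimmer~\cite{Wim14} had to overcome for the Boolean slice, and his proof does not go through a naive conditional-expectation projection at all. Instead he pulls the function back to $\symm{n}$, uses the conjugacy-class structure to make $\Heat_t$ commute with each $\Lap_\tau$, establishes an influence self-improvement bound $\Inf_\tau(\Heat_t g) \leq \Inf_\tau(g)^{3/2}$, and then deploys a nontrivial structural argument on $\symm{n}$ (his Theorem~V.6) to extract a junta --- one defined in a different, ``ordered'' sense and only afterward projected back down to the slice. The paper's proof simply observes that Wimmer's argument is modular in the log-Sobolev input and in the pullback map, so it substitutes the multislice log-Sobolev constant from Theorem~\ref{thm:main} and the multislice pullback $\overline{f}$ and cites the rest.

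So your proposal is not merely a different route to the same result; it reopens the hard combinatorial problem that the cited machinery exists to solve, and then asserts a resolution without proof. To repair it you would essentially need to reprove Wimmer's Theorem~V.6 (or an equivalent) for $\slice\hamlevel$, including the structural step that turns ``few influential transpositions'' into ``close to a genuine junta on the multislice.'' (A smaller issue: your final rounding loses a factor of $2$ --- $\|f-g_J\|_2^2 \leq 2(\ep/8 + \ep/8) = \ep/2$ gives $\Pr[f\neq h]\leq 2\ep$, not $\ep$ --- but that is cosmetic.)
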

The proof is essentially identical to Wimmer's proof~\cite[Sec.~VI]{Wim14} of the analogous theorem for functions on the Boolean slice (i.e., the $\ell = 2$ case of the above). After replacing Wimmer's pullback function (notated $f^g$ therein) with our generalization~$\overline{f}$, it only remains to substitute in our main log-Sobolev inequality for the multislice $\slice\hamlevel$.
\ignore{
The idea is that we will embed $g$ into some function $f^g\colon S_n\to \{-1,1\}$, in a way that preserves the action of transpositions. Then we will apply Theorem~V.6 of \cite{Wim14}, which gives a junta on $S_n$, and then finally show that we can project back to a junta on $\slice\hamlevel$.

We define the projection map $P_\hamlevel\colon S_n \to \slice\hamlevel$, such that $(P_\hamlevel(\sigma))_i = j$ if $\hamlevel_{j-1} \leq \sigma(i) < \hamlevel_j$ (take $\hamlevel_0 = 0$). $P_\hamlevel$ commutes with transpositions, i.e.\ $P_\hamlevel(\sigma\tau) = P_\hamlevel(\sigma)\tau$ for $\tau \in \transpos n$. Also, $P_\hamlevel$ composed with the uniform distribution over $S_n$ gives the uniform distribution over $\slice\hamlevel$.

Using this setup, we define $f^g\colon S_n \to \{-1,1\}$ by $f^g(\sigma) = g(P_\hamlevel\sigma)$.
Because $P_\hamlevel$ commutes with transpositions, $P_\hamlevel$ behaves well with influences and also with the Markov operator $\Heat_t$ (defined suitably on the symmetric group).
\begin{lemma}[Proposition VI.1 of \cite{Wim14}] \label{lem:projection-influences}
Let $g\colon \slice\hamlevel \to \R$ and let $f^g\colon S_n \to \R$ such that $f^g(\sigma) = g(P_\hamlevel \sigma)$, and let $\tau \in \transpos n$ and $t > 0$. Then
\begin{enumerate}[label={(\roman*)}]
\item $\Inf_\tau(f^g) = \Inf_\tau(g)$,
\item $f^{\Heat_t g} = \Heat_t f^g$,
\item $\Inf_\tau(\Heat_t f^g) = \Inf_\tau(\Heat_t g)$.
\end{enumerate}
\end{lemma}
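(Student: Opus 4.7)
The plan is to derive all three parts from the two properties of the projection $P_\hamlevel$ that are stated just before the lemma: the intertwining identity $P_\hamlevel(\sigma\tau) = P_\hamlevel(\sigma)\,\tau$ for each $\sigma \in \symm n$ and $\tau \in \transpos n$, and the pushforward identity that $P_\hamlevel(\bsigma)$ is uniform on $\slice\hamlevel$ whenever $\bsigma$ is uniform on $\symm n$. Together these say that $P_\hamlevel$ is a measure-preserving intertwiner of the random-transposition action on $\symm n$ with the random-transposition action on $\slice\hamlevel$, and items (i)--(iii) are the bookkeeping consequences.

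For (i), I would simply unfold $\Inf_\tau(f^g) = \tfrac12 \E_{\bsigma \sim \symm n}[(f^g(\bsigma) - f^g(\bsigma\tau))^2]$ and substitute $f^g(\bsigma) = g(P_\hamlevel \bsigma)$ and $f^g(\bsigma\tau) = g(P_\hamlevel(\bsigma\tau)) = g((P_\hamlevel \bsigma)^\tau)$, where the last equality uses the intertwining identity (after observing that the string action $(u^\tau)_j = u_{\tau(j)}$ is precisely right-composition with $\tau$). By the pushforward property, $\bu \coloneqq P_\hamlevel \bsigma$ is uniform on $\slice\hamlevel$, so the expectation collapses to $\tfrac12 \E_{\bu \sim \pi_\hamlevel}[(g(\bu) - g(\bu^\tau))^2] = \Inf_\tau(g)$.

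For (ii), I would couple the two continuous-time chains by feeding them the same i.i.d.\ stream of uniform transpositions $\btau_1, \btau_2, \ldots$ and the same $\mathrm{Poisson}(t)$ clock, which I'll call $\bN$. A straightforward induction on $k$ starting from the single-transposition intertwining yields $P_\hamlevel(\sigma \btau_1 \cdots \btau_k) = P_\hamlevel(\sigma)\,\btau_1 \cdots \btau_k$ for every realization, so taking expectations gives
\[
 (\Heat_t f^g)(\sigma) = \E[g(P_\hamlevel(\sigma \btau_1 \cdots \btau_{\bN}))] = \E[g(P_\hamlevel(\sigma)\,\btau_1 \cdots \btau_{\bN})] = (\Heat_t g)(P_\hamlevel \sigma) = f^{\Heat_t g}(\sigma).
\]
Then (iii) is immediate by applying (i) to the function $\Heat_t g$ in place of $g$ and then invoking (ii): $\Inf_\tau(\Heat_t f^g) = \Inf_\tau(f^{\Heat_t g}) = \Inf_\tau(\Heat_t g)$.

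There is no substantive obstacle here; the whole lemma is formal bookkeeping. The one thing to watch is the direction of function composition: writing $P_\hamlevel \sigma = u_0 \circ \sigma$ for the fixed canonical coloring $u_0 \in [\ell]^n$ (so that $(P_\hamlevel \sigma)_i = u_0(\sigma(i))$) reduces the intertwining identity to the associativity $u_0 \circ (\sigma \tau) = (u_0 \circ \sigma) \circ \tau$, which is tautological once the string action has been set up correctly.
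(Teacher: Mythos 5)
Your proof is correct, and it is exactly the intended argument: the paper itself supplies no proof of this lemma, simply quoting it as Proposition~VI.1 of Wimmer~\cite{Wim14}, and the standard proof there is the same bookkeeping you carry out --- the equivariance $P_\hamlevel(\sigma\tau) = (P_\hamlevel\sigma)^\tau$ plus the fact that $P_\hamlevel$ pushes the uniform measure on $\symm{n}$ to $\pi_\hamlevel$, giving (i) by substitution, (ii) by the coupled Poisson clock (or equivalently by iterating the equivariance through the Markov operator and exponentiating), and (iii) by combining the two. Your care with the composition convention (writing $P_\hamlevel\sigma = u_0 \circ \sigma$ so the intertwining is associativity of composition) is the only point where one could slip, and you handled it correctly.
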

We can characterize the functions on the symmetric group which are pullbacks of functions on the multislice. The proof is similar to that of Lemma~VI.2 in \cite{Wim14}.
\begin{lemma} \label{lem:projection_conditions}
    The following are equivalent:\rnote{change to widehats; also no need for range to be Boolean?}
    \begin{enumerate}
    \item $f\colon S_n \to \{-1,1\}$ satisfies $\hat f_\lambda = \rho_\lambda((i\ j)) \hat f_\lambda$ for all $\lambda \vdash n$ for all $1 \leq k \leq \colors$ and $\hamlevel_{k-1} < i < j \leq \hamlevel_{k}$ (again taking $\hamlevel_0 = 0$).
    \item There exists $g\colon \slice\hamlevel \to \{-1,1\}$ such that $f=f^g$.
    \end{enumerate}
\end{lemma}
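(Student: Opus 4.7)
The plan is to reduce both conditions to the single intermediate statement that $f$ is invariant under left-multiplication by the Young subgroup $Y = \symm{\kappa_1} \times \symm{\kappa_2} \times \cdots \times \symm{\kappa_\colors}$, viewed as a subgroup of $\symm{n}$ acting by permuting each of the $\colors$ blocks of values $\{\hamlevel_{k-1}+1, \dots, \hamlevel_k\}$ (where the $\hamlevel_k$ are cumulative partial sums). The key fact about the projection map $P_\hamlevel$ is that its fibers are exactly the left cosets of $Y$: two permutations $\sigma, \sigma'$ have $P_\hamlevel(\sigma) = P_\hamlevel(\sigma')$ if and only if $\sigma'\sigma^{-1} \in Y$, because composing on the left with an element that permutes values within a block does not change which block any value $\sigma(i)$ lies in. Consequently, the functions of the form $f = f^g$ for some $g\colon \slice{\hamlevel} \to \R$ are precisely those satisfying $f(\eta \sigma) = f(\sigma)$ for every $\eta \in Y$ and $\sigma \in \symm{n}$; and the range constraint is automatic since $g$ and $f^g$ take exactly the same set of values.

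Next I would translate this left-translation invariance to the Fourier side. With the convention $\hat{f}_\lambda = \sum_{\sigma \in \symm{n}} f(\sigma)\,\rho_\lambda(\sigma)$ (any other standard convention works identically up to transposes/conjugates), a direct change of variable gives $\widehat{(f \circ L_{\eta^{-1}})}_\lambda = \rho_\lambda(\eta)\, \hat{f}_\lambda$, so the condition $f \circ L_{\eta^{-1}} = f$ is equivalent, via Fourier uniqueness, to $\rho_\lambda(\eta)\, \hat{f}_\lambda = \hat{f}_\lambda$ for every irrep $\lambda \vdash n$. Thus $Y$-invariance on the left is equivalent to $\rho_\lambda(\eta)\,\hat{f}_\lambda = \hat{f}_\lambda$ for every $\eta \in Y$ and every $\lambda$.

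Finally I would observe that the Young subgroup $Y$ is generated by the ``adjacent'' transpositions $(i\ i{+}1)$ lying inside a single block, and in particular by all transpositions $(i\ j)$ with $\hamlevel_{k-1} < i < j \leq \hamlevel_k$. Since the map $\eta \mapsto \rho_\lambda(\eta)$ is a homomorphism and the stabilizer $\{\eta \in Y : \rho_\lambda(\eta)\hat f_\lambda = \hat f_\lambda\}$ is a subgroup of $Y$, checking the condition on these generating transpositions is equivalent to checking it on all of $Y$. Combining the two reformulations yields the equivalence of (1) and (2). The only small care needed is the $\{-1,1\}$ range claim: given (1), define $g(u) \coloneqq f(\sigma)$ for any $\sigma$ with $P_\hamlevel(\sigma) = u$ (well-defined by $Y$-invariance), and this $g$ inherits the range from $f$; conversely, $f^g$ manifestly takes values in $g$'s range. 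The main (and only real) obstacle is pinning down sign/convention carefully in the Fourier translation step — the rest is bookkeeping about cosets and generators.
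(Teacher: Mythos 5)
Your proof is correct, and it is essentially the intended argument: identifying the fibers of $P_\hamlevel$ with left cosets of the Young subgroup acting on values, translating that invariance into the Fourier fixed-point condition $\rho_\lambda(\eta)\hat f_\lambda = \hat f_\lambda$, and reducing to the generating within-block transpositions is exactly how this characterization goes --- the paper itself gives no proof, deferring to Wimmer's Lemma~VI.2, whose proof proceeds the same way. The convention worry you flag is indeed harmless, since transpositions are involutions and the set of $\eta$ fixing all $\hat f_\lambda$ is a subgroup, so either left/right or inverse convention yields the same condition.
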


\begin{proof}[Proof of \Cref{thm:friedgut-multislice}]
We can view the multislice as the Schreier graph $\operatorname{Sch}(G,X,U)$ with the group $G = S_n$ acting on $X = \slice\hamlevel$, and the set of generators $U = \transpos{n}$.

Let $\tau \in \transpos n$. Since the set of transpositions is a conjugacy class in $S_n$, \cite[Prop.~2.5]{OW13a} says that $\Heat_t$ and $L_\tau$ commute for all $t$. We use this and hypercontractivity (\Cref{thm:hypercontractivity} with $q = 4$) to get that for $t = \ln 3/(2\varrho_\hamlevel)  = O(n \log \frac{1}{\delta})$,
\[
    \Inf_\tau(\Heat_tg) = \norm{L_\tau \Heat_t g}_2^2 = \norm{\Heat_tL_\tau g}_2^2 \leq \norm{L_\tau g}_{4/3}^2 = \E[|L_\tau g|^{4/3}]^{3/2}.
\]
Since $|L_\tau g|$ only takes on the two values $0$ and $1$, we can scale this as $|L_\tau g|^{4/3} = |L_\tau g|^2$, so
\[
    \E[|L_\tau g|^{4/3}]^{3/2} = (\norm{L_\tau g}_2^2)^{3/2} = \Inf_\tau (g)^{3/2}.
\]
Using \Cref{lem:projection-influences},
\[
    \Inf_\tau(\Heat_tf^g) = \Inf_\tau(\Heat_t g) \leq \Inf_\tau (g)^{3/2} = \Inf_\tau(f^g)^{3/2}.
\]
This then satisfies the conditions for Theorem~V.6 of \cite{Wim14}, so there exists a function $h\colon S_n \to \{-1,1\}$ depending on $2^{O_\delta(k/\ep)}$ coordinates such that $\Pr[g(\bx)\ne h(\bx)] \leq \ep$. Since $h$ given by Theorem~V.6 satisfies that for each $\lambda \vdash n$, $\hat h_\lambda$ is either $\hat f^g_\lambda$ or 0, this fulfills \Cref{lem:projection_conditions} so $h = f^{g'}$ for some $g'\colon \slice\hamlevel \to \{-1,1\}$, and $\Pr[g(\bx)\ne g'(\bx)] \leq \ep$.
\end{proof}
}

\subsection{Nisan--Szegedy Theorem on the multislice}
The Nisan--Szegedy Theorem says that a degree-$k$ Boolean-valued function on the Hamming cube is a $k2^k$-junta.  (We remark that the smallest quantity~$\gamma_2(k)$ that can replace $k2^k$ here is now known~\cite{CHS18} to satisfy $3 \cdot 2^{k-1}-2 \leq \gamma_2(k) < 22 \cdot 2^k$.)  In~\cite{FI18b}, an analogous result for functions on Hamming slices was shown; they conjectured a similar result for functions on multislices.  We resolve this conjecture, following the structure of their proof.  This proof structure involves proving three successively stronger versions of the desired theorem.

The first version pertains only to functions on balanced multislices; it was originally established for Hamming slices in~\cite{FKMW18}:
\begin{theorem}                                     \label{thm:ns1}
    Fix $\colors \geq 2$, assume $n$ is a multiple of~$\colors$, and let $\hamlevel = (n/\colors, \dots, n/\colors) \in \N_+^\colors$.  If $f\colon \slice\hamlevel \to \{0,1\}$ has degree at most~$k$, then $f$ is an $\colors^{O(k)}$-junta.
\end{theorem}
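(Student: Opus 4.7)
The plan is a Nisan–Szegedy-style argument: hypercontractivity gives a dichotomy for single-transposition influences, and combining this with the total-influence bound $\Inf[f] \leq kn$ together with the balanced multislice's symmetry lets us bound the number of relevant coordinates. Let $J \subseteq [n]$ denote the set of coordinates $f$ genuinely depends on; the goal is $|J| \leq \ell^{O(k)}$.

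The first step is a per-transposition lower bound. For any $\tau \in \transpos{n}$, $\Lap_\tau f$ has degree at most $k$ (since $\Lap_\tau$ preserves isotypic components) and takes values in $\{-1, 0, 1\}$. Because $f$ is $\{0,1\}$-valued, $(\Lap_\tau f)^2 = (\Lap_\tau f)^4$ pointwise, so $\|\Lap_\tau f\|_4^4 = \|\Lap_\tau f\|_2^2$. Applying \Cref{lem:hypercontractivity-bounded-degree} with $q = 4$ and $p = 1/\ell$ yields $\|\Lap_\tau f\|_4 \leq \ell^{O(k)} \|\Lap_\tau f\|_2$, and substituting gives
\[
\Inf_\tau[f] = \tfrac12 \|\Lap_\tau f\|_2^2 \in \{0\} \cup [\ell^{-O(k)}, \infty).
\]

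The second step is a symmetry count. Because $f$ is invariant under $S_{[n] \setminus J}$, for each fixed $j \in J$ the value $\Inf_{(i\;j)}[f]$ is constant over $i \in [n] \setminus J$. This common value is strictly positive whenever $|J| < n$: if $u, v \in \slice\kappa$ agree on $J \setminus \{j\}$ with $f(u) \neq f(v)$ and $u_j = c$, $v_j = c' \neq c$, then a one-line histogram count (using that $v|_J$ contains at most $\kappa_{c'} = n/\ell$ copies of $c'$) shows that $u|_{[n] \setminus J}$ contains at least one copy of $c'$; picking such an $i \in [n] \setminus J$, the string $u^{(i\;j)}$ agrees with $v$ on $J$, so $f(u^{(i\;j)}) = f(v) \neq f(u)$ by $S_{[n]\setminus J}$-invariance. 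Combining with the dichotomy, $\Inf_{(i\;j)}[f] \geq \ell^{-O(k)}$ for every $j \in J$ and $i \in [n] \setminus J$.

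Summing these $|J|(n - |J|)$ contributions and comparing with $\Inf[f] \leq kn$ from \Cref{lem:total-influence-ub}, we conclude $|J|(n - |J|) \leq 2kn \cdot \ell^{O(k)}$. The case $|J| \leq n/2$ immediately yields $|J| \leq \ell^{O(k)}$, as desired. The remaining case $|J| > n/2$ instead gives $|J^c| \leq \ell^{O(k)}$, and I plan to handle it by inducting on $n$ after a mild strengthening of the statement to any multislice with $\min_i \kappa_i/n \geq 1/(2\ell)$: in the edge case, successively restrict $f$ by fixing each coordinate of $J^c$ (permissible because $f$ is $S_{J^c}$-invariant) to obtain a degree-$\leq k$ Boolean function on a multislice of size $|J|$ whose histogram still satisfies the $1/(2\ell)$ threshold, and apply the inductive hypothesis; the base case $n = O(\ell^{O(k)})$ is trivial since then $|J| \leq n$ already suffices. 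The main obstacle is precisely this edge case: the crisp cross-transposition count breaks down when nearly every coordinate is relevant, so completing the proof cleanly requires either the above induction (carefully tracking the balance constant through the restrictions so that the hypercontractivity factor remains $\ell^{O(k)}$) or a bespoke rigidity argument exploiting the extra $S_\ell$ color-permutation symmetry special to the balanced case.
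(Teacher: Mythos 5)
Your first half is correct and matches the paper's route: \Cref{lem:hypercontractivity-bounded-degree} applied to $\Lap_\tau f$ (Boolean-valued, degree $\leq k$) gives the dichotomy $\Inf_\tau[f] \in \{0\} \cup [\ell^{-O(k)},\infty)$ — this is exactly the paper's \Cref{lem:influence-lb} — and the $S_{[n]\setminus J}$-symmetry argument showing $\Inf_{(i\,j)}[f]>0$ for every $j\in J$ and $i\notin J$ is sound (the histogram count you sketch does work). Combined with $\Inf[f]\leq kn$ from \Cref{lem:total-influence-ub}, this correctly yields $|J|\leq \ell^{O(k)}$ when $|J|\leq n/2$.

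The case $|J|>n/2$ is a genuine gap, and the induction you sketch does not close it. Restricting $u|_{J^c}$ to a fixed assignment $w$ only captures $f$ on the sub-multislice $\{u : \mathrm{hist}(u|_{J^c})=\mathrm{hist}(w)\}$; showing the restricted function is a junta does not imply the same for $f$ on the other sub-multislices, and the union of the resulting junta sets over all admissible $w$ can be as large as $\ell^{|J^c|}\cdot\ell^{O(k)}$, which is doubly exponential in $k$, not $\ell^{O(k)}$. Worse, if $J=[n]$ there is no coordinate to restrict at all, and the cross-count gives no information. The fix (and, I believe, what the cited argument of~\cite{FI18b} actually does) is to avoid the $J$ vs.\ $J^c$ dichotomy entirely: let $P$ be the coarsest partition of $[n]$ such that $f$ is invariant under the Young subgroup $S_P$ (well-defined since the stabilizer of $f$ is a group). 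By minimality, for any two distinct blocks $B_s,B_t$ of $P$ there is \emph{some} pair with positive influence, and by $S_P$-symmetry \emph{every} pair $i\in B_s,\ j\in B_t$ then has $\Inf_{(i\,j)}[f]\geq \ell^{-O(k)}$. Therefore
\[
\Inf[f] \ \geq\ \ell^{-O(k)}\sum_{s<t}|B_s||B_t| \ =\ \ell^{-O(k)}\cdot\tfrac12\bigl(n^2 - \textstyle\sum_s|B_s|^2\bigr)\ \geq\ \ell^{-O(k)}\cdot\tfrac12\,n\,(n-b_{\max}),
\]
where $b_{\max}$ is the size of the largest block. Since the relevant set is $J=[n]\setminus B^*$ for the largest block $B^*$, this gives $n|J|\,\ell^{-O(k)} \leq 2\Inf[f] \leq 2kn$ and hence $|J|\leq \ell^{O(k)}$ directly, with no case split and no induction on $n$.
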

To prove \Cref{thm:ns1}, we first use our hypercontractivity result to establish the following analogue of \cite[Lem.~3.1]{FI18b}:
\begin{lemma} \label{lem:influence-lb}
    In the setting of \Cref{thm:ns1}, every nonzero influence $\Inf_\tau[f]$ is at least $\colors^{-O(k)}$ (where the $O(\cdot)$ hides a universal constant).
\end{lemma}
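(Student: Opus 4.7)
The plan is to apply the hypercontractivity corollary \Cref{lem:hypercontractivity-bounded-degree} to the function $g \coloneqq \Lap_\tau f = f - f^\tau$ and exploit the fact that $g$ takes values in $\{-1,0,1\}$.

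First I would verify that $g$ still has degree at most~$k$. Since $f$ has degree~$\leq k$, by definition it lies in $\bigoplus_{\lambda\, :\, n - \lambda_1 \leq k} V^\hamlevel_\lambda$ in the decomposition \Cref{eqn:M-breakdown}. Each component $V^\hamlevel_\lambda$ is by construction an $\symm{n}$-subrepresentation of $M^\hamlevel$, and the operation $f \mapsto f^\tau$ is precisely the right action of $\tau \in \transpos{n}$ on $M^\hamlevel$; hence $f^\tau$ lies in the same subspace, and so does the difference $g = f - f^\tau$.

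Next, because $f\colon \slice\hamlevel \to \{0,1\}$, the function $g$ is $\{-1,0,1\}$-valued pointwise, and in particular $|g(u)|^r = g(u)^2$ for every real $r \geq 1$ and every $u \in \slice\hamlevel$. Now I would feed $g$ into \Cref{lem:hypercontractivity-bounded-degree} with the ``small'' norm setup (taking $q = 4$, so $q' = 4/3$). In the balanced multislice we have $p = \min_i \hamlevel_i/n = 1/\colors$, and so that lemma yields
\[
    \|g\|_2 \;\leq\; \colors^{O(k)} \cdot \|g\|_{4/3}.
\]
On the other hand the $\{-1,0,1\}$-valuedness gives
\[
    \|g\|_{4/3} \;=\; \E\bigl[|g|^{4/3}\bigr]^{3/4} \;=\; \E[g^2]^{3/4} \;=\; \|g\|_2^{3/2}.
\]
Combining the two displays we obtain $\|g\|_2 \leq \colors^{O(k)} \cdot \|g\|_2^{3/2}$, so either $g \equiv 0$ (equivalently $\Inf_\tau[f] = 0$) or $\|g\|_2 \geq \colors^{-O(k)}$. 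In the latter case $\Inf_\tau[f] = \tfrac12 \|g\|_2^2 \geq \colors^{-O(k)}$, which is the desired bound.

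The only genuinely non-routine step is the first one — arguing that applying a single transposition to a function does not raise its degree. This is immediate once one invokes the representation-theoretic decomposition of $M^\hamlevel$, but it is worth flagging because $\Lap_\tau$ is not one of the $\symm{n}$-equivariant operators (like $\Lap$ or $\Heat_t$) for which invariance of the isotypic components is manifest; here invariance comes not from $\Lap_\tau$ commuting with the action but from the simpler observation that each $V^\hamlevel_\lambda$ is itself closed under the action. Everything else is just hypercontractivity applied to a bounded-range function, in direct analogy with the Hamming-slice argument of~\cite{FI18b}.
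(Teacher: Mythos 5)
Your proof is correct and is essentially the paper's argument: apply the hypercontractive inequality of \Cref{lem:hypercontractivity-bounded-degree} with $q=4$ to $\Lap_\tau f$, use that this function is $\{-1,0,1\}$-valued to convert $\|\Lap_\tau f\|_{4/3}$ into a power of $\|\Lap_\tau f\|_2$, and deduce the lower bound on nonzero influences. The one thing you spell out that the paper treats as immediate is that $\Lap_\tau f$ has degree at most $k$; your representation-theoretic justification (each $V^\hamlevel_\lambda$ is a subrepresentation, hence closed under the action of $\tau$) is valid, though one could also argue even more elementarily that precomposing a linear combination of $k$-juntas with a coordinate permutation yields another linear combination of $k$-juntas.
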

\begin{proof}
    Since $f$ has degree at most~$k$, the same is true of $\Lap_\tau f$.  Thus \Cref{lem:hypercontractivity-bounded-degree} shows (taking, say, $q = 4$) that
    \[
        \Inf_\tau[f] = \tfrac12 \|\Lap_\tau f\|_2^2 \leq \colors^{O(k)} \cdot \|\Lap_\tau f\|^2_{4/3}.
    \]
    Since $f$ is Boolean-valued, $\Lap_\tau f$ takes values in $\{0,\pm 1\}$, and so
    \[
        \|\Lap_\tau f\|^2_{4/3} = (\|\Lap_\tau f\|_2^2)^{3/2} = (2\Inf_\tau[f])^{3/2}.
    \]
    Combining these yields that either $\Inf_\tau[f] = 0$ or else $\Inf_\tau[f]^{-1/2} \leq \colors^{O(k)}$, as needed.
\end{proof}
\noindent With \Cref{lem:influence-lb} in hand (as well as \Cref{lem:total-influence-ub}) follows exactly as in~\cite[Sec.~3.1]{FI18b}.\\

The same argument works as long as $\min_i \{\hamlevel_i\} = \Omega(n)$.
The second, stronger version of Nisan--Szegedy for multislices shows that in fact, it suffices to assume only that $\min_i \{\hamlevel_i\} \geq \ell^{O(k)}$.
\begin{theorem}                                     \label{thm:ns2}
    There are universal constants~$C \geq C'$ such that the following holds.  For all $k \in \N_+$ and all $\hamlevel \in \N_+^\colors$ with $\min_i \{\hamlevel_i\} \geq \colors^{C k}$, if $f\colon \slice\hamlevel \to \{0,1\}$ has degree at most~$k$, then $f$ is an $\colors^{C' k}$-junta.
\end{theorem}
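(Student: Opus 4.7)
The plan is to reduce Theorem~\ref{thm:ns2} to Theorem~\ref{thm:ns1} by embedding a balanced sub-multislice inside $\slice\hamlevel$ and applying the balanced result to restrictions of $f$. Concretely, I would set $s = \hamlevel^{C k}$ so that the hypothesis $\min_i \hamlevel_i \geq \hamlevel^{C k}$ gives $\hamlevel_i \geq s$ for every $i$. This lets us pick any subset $R \subseteq [n]$ with $|R| = \colors s$ together with an assignment $\alpha\colon [n]\setminus R \to [\colors]$ whose histogram is $(\hamlevel_1 - s, \dots, \hamlevel_\colors - s)$. The restriction $f_\alpha := f|_{[n]\setminus R = \alpha}$ is then a Boolean function on the balanced multislice $\slice{(s,\dots,s)}$ of degree at most $k$, so Theorem~\ref{thm:ns1} tells us $f_\alpha$ is a $J_\alpha$-junta with $|J_\alpha| \leq \colors^{C_1 k}$, where $J_\alpha \subseteq R$ and $C_1$ is the universal constant from Theorem~\ref{thm:ns1}.

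The remaining task is to promote this local information into a global junta for $f$ of size at most $\colors^{C' k}$. I would argue by contradiction: suppose $f$'s minimal junta set $J^\ast$ satisfies $|J^\ast| > \colors^{C' k}$ for a constant $C'$ somewhat larger than $C_1$. Each $j \in J^\ast$ admits a witness pair $u, u' \in \slice\hamlevel$ differing by a transposition $(j\; j')$ with $f(u) \neq f(u')$. My plan is to choose a random balanced restriction $(R, \alpha)$ and show that, with positive probability, enough of these witnesses survive that $J_\alpha$ must contain a $\colors^{-O(k)}$ fraction of $J^\ast \cap R$. For $C$ large relative to $C_1$ and $C'$ slightly larger than $C_1$, this would force $|J_\alpha| > \colors^{C_1 k}$, contradicting Theorem~\ref{thm:ns1}.

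To carry out the random-restriction step, I would draw $R$ uniformly among $\colors s$-subsets of $[n]$ and $\alpha$ uniformly among assignments of the correct histogram, and bound from below the probability that a given witness $(u, u')$ for $j \in J^\ast$ satisfies $\{j, j'\} \subseteq R$ and $u|_{[n]\setminus R} = \alpha$. Using $s/n \geq \colors^{C k}/n$ one sees the probability each witness survives is at least an inverse-polynomial function of the parameters, and averaging this survival probability over the $\geq \colors^{C' k}$ witnesses yields a restriction in which $f_\alpha$ has many more than $\colors^{C_1 k}$ relevant coordinates once $C'$ is chosen sufficiently larger than $C_1$.

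The main obstacle is precisely this combining step: a single $\alpha$ must simultaneously realize the outside-$R$ projection of many different witness pairs. I expect the key technical input to be hypercontractivity for bounded-degree functions (Lemma~\ref{lem:hypercontractivity-bounded-degree}) applied on the sub-multislice, which gives a uniform lower bound of $\colors^{-O(k)}$ on any nonzero transposition influence of $f_\alpha$. This lets us replace the delicate ``witness survival'' counting by a second-moment statement about the transposition-influence profile of $f$ under a random restriction, at which point an averaging argument of the type used in the balanced case (Lemma~\ref{lem:influence-lb} and the ensuing counting) should close the induction.
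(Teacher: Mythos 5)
Your proposal takes a genuinely different route from the paper. The paper's proof (by reference to \cite[Sec.~3.2]{FI18b}) is an induction on the histogram: it starts from the balanced case (\Cref{thm:ns1}) and shows that the junta conclusion is preserved whenever a single entry $\hamlevel_i$ is incremented, so that one reaches an arbitrary $\hamlevel$ with $\min_i \hamlevel_i \geq \colors^{Ck}$ by a chain of one-step restrictions/extensions. Your plan is instead a ``random restriction to a balanced sub-multislice'' argument: fix a balanced block $R$ of size $\colors s$, restrict the outside to a fixed $\alpha$, apply \Cref{thm:ns1} to $f_\alpha$, and argue by contradiction that a large global junta set would force some $f_\alpha$ to have too many relevant coordinates.

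There is a genuine gap in the combining step, and your own description points at it. First, the ``witness survival'' event you want has vanishingly small probability: for a fixed witness pair $(u,u')$ for $j$, the probability that $u|_{[n]\setminus R}$ equals the random $\alpha$ is $1/|\slice{\hamlevel - s\mathbf 1}|$, which is exponentially small in $n$. Averaging over $|J^\ast| \geq \colors^{C'k}$ witnesses therefore yields far fewer than one expected surviving witness, so a first-moment argument cannot produce a single $\alpha$ that simultaneously keeps many coordinates of $J^\ast \cap R$ relevant. Second, the proposed repair---a uniform lower bound $\colors^{-O(k)}$ on nonzero transposition influences of $f$---only comes from \Cref{lem:hypercontractivity-bounded-degree} and \Cref{lem:influence-lb} with constant depending on $p = \min_i \hamlevel_i/n$. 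In the regime of \Cref{thm:ns2}, $p$ can be as small as $\colors^{Ck}/n$, so the hypercontractive bound gives a nonzero-influence floor that depends on $n$, not just on $k$ and $\colors$; this is exactly the obstruction that \Cref{thm:ns2} is designed to overcome. You cannot apply \Cref{lem:influence-lb} to $f_\alpha$ either until you already know $j$ is relevant for $f_\alpha$, so it does not substitute for the missing survival estimate. The paper sidesteps all of this by never taking a large random restriction: each step of the FI18b-style induction changes one histogram entry by one, where the relevant bookkeeping (relating the junta of $f$ to the juntas of its single-coordinate restrictions) can be carried out deterministically.
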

Roughly speaking, $C = 3C'$, where $C'$ is the constant hidden in the $O(\cdot)$ of \Cref{thm:ns1}.  The proof of \Cref{thm:ns2} exactly follows the argumentation in~\cite[Sec.~3.2]{FI18b}.  Essentially, starting from \Cref{thm:ns1},  they show that the truth of the statement is preserved whenever one of the quantities~$\hamlevel_i$ is incremented.\\

Before stating our third Nisan--Szegedy variant, let us extend the definition of $\gamma_2(k)$; we'll define $\gamma_\colors(k)$ to be the least integer such that the following statement is true:
\[
    \text{Every degree-$k$ Boolean-valued function $f\colon [\colors]^n \to \{0,1\}$ on the ``$\ell$-multicube'' is a $\gamma_\colors(k)$-junta.}
\]
Here we say that $f\colon [\colors]^n \to \R$ has degree at most~$k$ if it is a linear combination of $k$-juntas (as usual for functions on product spaces, see \cite[Def.~8.32]{OD14}).  As remarked at the end of~\cite{FI18b}, it's easy to show that $\colors^{k-1} \leq \gamma_\colors(k) \leq \gamma_2(\lceil \log_2 \colors \rceil k)$.  When $\colors$ is a power of~$2$, this upper bound is at most $22 \cdot \colors^k$, very close to the lower bound; in general we have $\gamma_\colors(k) < 22 \cdot \colors^{2k}$.

Our third and final Nisan--Szegedy Theorem for the multislice improves the junta size in \Cref{thm:ns2} to~$\gamma_\colors(k)$, which is optimal (since the analogue of~\cite[Lem.~3.10]{FI18b} equally holds in our setting).  We do not know, however, the weakest lower bound we can assume on $\min_i\{\hamlevel_i\}$.
\begin{theorem}                                     \label{thm:ns3}
    There is a universal constant~$C$ such that the following holds.  For all $k \in \N_+$ and all $\hamlevel \in \N_+^\colors$ with $\min_i \{\hamlevel_i\} \geq \colors^{C k}$, if $f\colon \slice\hamlevel \to \{0,1\}$ has degree at most~$k$, then $f$ is an $\gamma_\colors(k)$-junta.
\end{theorem}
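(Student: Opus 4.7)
The plan is to apply \Cref{thm:ns2} to reduce the problem to functions depending on a small junta, then transfer to the multicube, where the defining property of $\gamma_\colors(k)$ directly yields the desired bound. With the universal constant $C$ chosen somewhat larger than the constant in \Cref{thm:ns2}, the hypothesis $\min_i \hamlevel_i \geq \colors^{Ck}$ triggers \Cref{thm:ns2}, producing a set $J \subseteq [n]$ with $|J| \leq \colors^{C'k}$ such that $f$ depends only on $J$, and with $|J| \ll \min_i \hamlevel_i$. The latter ensures that every pattern $v \in [\colors]^J$ is realized by some $u \in \slice{\hamlevel}$ with $u|_J = v$, so the ``multicube lift'' $g \colon [\colors]^J \to \{0,1\}$ defined by $g(v) = f(u)$ for any such $u$ is unambiguous.

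The heart of the proof is to show that $g$ has degree at most $k$ on the multicube $[\colors]^J$ (with uniform product measure). Once this is done, the defining property of $\gamma_\colors(k)$ yields that $g$, and hence $f$, is a $\gamma_\colors(k)$-junta. To prove the degree bound, I would analyze the natural ``restriction'' linear map $\phi$ sending $h \colon [\colors]^J \to \R$ to the multislice function $\phi(h)(u) = h(u|_J)$. This map is injective (because $|J| \leq \min_i \hamlevel_i$ makes all $J$-patterns realizable), sends multicube degree-$\leq d$ functions into multislice degree-$\leq d$ functions, and its image equals the subspace of multislice functions depending only on $J$. The degree bound on $g$ then reduces to the \emph{reverse} implication: every multislice function depending only on $J$ and having multislice degree at most $k$ is the $\phi$-image of a multicube function of degree at most $k$.

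The main obstacle is this reverse implication. The cleanest route is a dimension count: use the isotypic decomposition \Cref{eqn:M-breakdown} of $M^\hamlevel$ to compute the dimension of multislice degree-$\leq k$ functions depending only on $J$, and show it equals the dimension of multicube degree-$\leq k$ functions on $[\colors]^J$. Combined with injectivity of $\phi$ and the forward direction, this yields a bijection at the degree-$\leq k$ level. The separation $|J| \ll \min_i \hamlevel_i$ rules out degenerate cases where these dimensions could fail to match. Overall, the structure closely follows the Boolean-slice argument in~\cite{FI18b}, with the main new ingredient being the representation-theoretic bookkeeping required when $\colors \geq 3$.
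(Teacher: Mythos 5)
Your overall skeleton (apply \Cref{thm:ns2}, lift $f$ to a function $g$ on the small multicube $[\colors]^J$, show $g$ has degree~$\leq k$, then invoke the definition of $\gamma_\colors(k)$) matches the paper. The genuine gap is in the step you call the ``reverse implication''. You correctly identify this as the heart of the matter, but you leave it to a dimension count which is neither carried out nor nearly as easy as your sketch suggests, and the paper instead handles it by a short explicit calculation that your proposal omits.

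To elaborate on why the dimension-count route is not a shortcut: the relevant subspace is $(\text{msd}_k) \cap (\text{dep}_J)$, where $\text{msd}_k$ denotes the multislice functions of degree $\leq k$ and $\text{dep}_J$ the functions depending only on $J$. Since $\text{dep}_J = (M^\hamlevel)^H$ for $H = \symm{[n]\setminus J}$, you would need to compute $\sum_{\lambda \unrhd \kappa,\ n - \lambda_1 \leq k} K_{\lambda\kappa}\cdot\dim\bigl((S^\lambda)^H\bigr)$, where $\dim\bigl((S^\lambda)^H\bigr)$ is the multiplicity of the trivial representation in $\text{Res}^{\symm{n}}_{\symm{n-|J|}} S^\lambda$, and then show this matches $\sum_{j=0}^{k}\binom{|J|}{j}(\colors-1)^j$. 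This is a real representation-theoretic calculation involving branching rules and Kostka numbers, and it is no easier than the direct argument. Moreover, the equality is \emph{not} a formal consequence of injectivity plus the forward direction --- it genuinely fails when $|J|$ is close to $n$ (e.g.\ for $\colors=2$, $k=1$, $|J|\in\{n-1,n\}$ the two dimensions already disagree), so the ``separation rules out degenerate cases'' remark is doing unproven work. Proving the equality in the regime $|J| \leq \min_i\hamlevel_i$ (or more precisely $|J|$ small enough) is exactly what the theorem requires; it cannot be waved away.

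What the paper actually does is more elementary and bypasses any dimension count. It writes $f$ as a linear combination of indicator $k$-juntas $\ind[u_{i_1}{=}c_1,\dots,u_{i_k}{=}c_k]$ with \emph{arbitrary} coordinate sets $\{i_1,\dots,i_k\}$ (not necessarily inside $J$), applies the conditional expectation $T$ that averages over coordinates outside $J$ (note $Tf = f$ since $f$ depends only on $J$), and then computes $T$ of each indicator explicitly, getting closed-form expressions $\frac{(\hamlevel_1-w_1)^{\underline{d_1}}\cdots(\hamlevel_\colors-w_\colors)^{\underline{d_\colors}}}{(n-L)^{\underline{k}}}$ in the histogram $w$ of the coordinates in $J$, possibly multiplied by an indicator on $J$-coordinates. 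Each such term is visibly of multicube degree $\leq k$, so $g$ has multicube degree $\leq k$. Your proposal as written stops short of proving the one fact on which the whole theorem rests.
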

The way we prove this departs somewhat from the polynomials-based proof in~\cite[Lem.~3.9]{FI18b}.
\begin{proof}
From \Cref{thm:ns2} we know that $f$ is an $\colors^{C' k}$-junta, where $C' \leq C$. Without loss of generality, say that $f(u)$ depends only on coordinates $u_1, \dots, u_J$, where $J \leq \colors^{C' k}$. Note that as we vary $u \in \slice\hamlevel$, we see all $\colors^{J}$ possibilities for the substring $(u_1, \dots, u_J)$; this is because $\min_i \{\hamlevel_i\} \geq J$.  As a consequence, we can define a function $g\colon [\colors]^J \to \{0,1\}$ by
\[
 g(u_1, \dots, u_J) = \E_{\substack{u_{J+1},\ldots,u_n \in [\colors]\colon\\ u \in \slice\hamlevel}}[f(u)].
\]
Since $f$ has degree $k$, it is a linear combination of $k$-juntas, and in particular, a linear combination of functions of the form $\ind[u_{i_1}=c_1,\dots,u_{i_k}=c_k]$. We show below that
\[
h_{i,c} := \E_{\substack{u_{J+1},\ldots,u_n \in [\colors]\colon\\ u \in \slice\hamlevel}}\bigl[\ind[u_{i_1}=c_1,\dots,u_{i_k}=c_k]\bigr]
\]
is a degree $k$ function, and so $g$ has degree $k$. Thus $g$ is (by definition) a $\gamma_\colors(k)$-junta, and hence so is $f$.

It remains to show that $h_{i,c}$ has degree $k$. Suppose first that $i_1,\dots,i_k > L$. Let $d = (\#_1c,\dots,\#_\colors c)$ be the histogram of $c_1,\dots,c_k$, and let $w = (\#_1u_{\leq L},\dots,\#_\colors u_{\leq L})$ be the histogram of $u_1,\dots,u_L$. The reader can verify that
\[
h_{i,c} = \frac{(\hamlevel_1-w_1)^{\underline{d_1}}\dots(\hamlevel_\colors-w_\colors)^{\underline{d_\colors}}}{(n-L)^{\underline{k}}},
\]
where $a^{\underline{b}} = a(a-1)\dots(a-b+1)$. Since $d_1+\dots+d_\colors=k$, this is a degree $k$ function.

When some of the indices $i_1,\dots,i_k$ are in $[L]$, we have to modify the argument slightly. Suppose that $i_1,\dots,i_r \in [L]$ and $i_{r+1},\dots,i_k \notin [L]$. Redefine $d$ to capture the histogram of $c_{r+1},\dots,c_k$. The reader can verify that
\[
h_{i,c} = \ind[u_{i_1}=c_1,\dots,u_{i_r}=c_r] \times \frac{(\hamlevel_1-w_1)^{\underline{d_1}}\dots(\hamlevel_\colors-w_\colors)^{\underline{d_\colors}}}{(n-L)^{\underline{k-r}}},
\]
which has degree $k$ since $d_1+\dots+d_\colors=k-r$.
\end{proof}

\bibliographystyle{alpha}
\bibliography{odonnell-bib}

\newcommand{\etalchar}[1]{$^{#1}$}
\begin{thebibliography}{DKK{\etalchar{+}}18b}

\bibitem[Ber67]{Bernstein67}
Arthur~Jay Bernstein.
\newblock Maximally connected arrays on the {$n$}-cube.
\newblock {\em SIAM J. Appl. Math.}, 15:1485--1489, 1967.

\bibitem[BKS18]{BKS18}
Boaz Barak, Pravesh Kothari, and David Steurer.
\newblock Small-set expansion in shortcode graph and the 2-to-2 conjecture.
\newblock Technical Report 1804.08662, arXiv, 2018.

\bibitem[Bon70]{Bon70}
Aline Bonami.
\newblock {\'E}tude des coefficients {F}ourier des fonctions de {$L^{p}(G)$}.
\newblock {\em Annales de l'Institut Fourier}, 20(2):335--402, 1970.

\bibitem[Bou17]{Bou17}
Rapha\"el Bouyrie.
\newblock An unified approach to the {J}unta theorem for discrete and
  continuous models.
\newblock Technical Report 1702.00753, arXiv, 2017.

\bibitem[Bou18]{Bou18}
Rapha\"el Bouyrie.
\newblock On quantitative noise stability and influences for discrete and
  continuous models.
\newblock {\em Combin. Probab. Comput.}, 27(3):334--357, 2018.

\bibitem[CFR11]{CFR11}
Sourav Chatterjee, Jason Fulman, and Adrian R\"ollin.
\newblock Exponential approximation by {S}tein's method and spectral graph
  theory.
\newblock {\em ALEA Lat. Am. J. Probab. Math. Stat.}, 8:197--223, 2011.

\bibitem[CHS18]{CHS18}
John Chiarelli, Pooya Hatami, and Michael Saks.
\newblock Tight bound on the number of relevant variables in a bounded degree
  {B}oolean function.
\newblock Technical Report 1801.08564, arXiv, 2018.

\bibitem[Cle84]{Cle84}
George~F. Clements.
\newblock A generalization of the {K}ruskal-{K}atona theorem.
\newblock {\em J. Combin. Theory Ser. A}, 37(1):91--97, 1984.

\bibitem[Cle94]{Cle94}
George~F. Clements.
\newblock Another generalization of the {K}ruskal--{K}atona theorem.
\newblock {\em J. Combin. Theory Ser. A}, 68(1):239--245, 1994.

\bibitem[Cle98]{Cle98}
George~F. Clements.
\newblock Yet another generalization of the {K}ruskal--{K}atona theorem.
\newblock {\em Discrete Math.}, 184(1-3):61--70, 1998.

\bibitem[DH02]{DH02}
Persi Diaconis and Susan Holmes.
\newblock Random walks on trees and matchings.
\newblock {\em Electron. J. Probab.}, 7:no. 6, 17, 2002.

\bibitem[Dia88]{Dia88}
Persi Diaconis.
\newblock {\em Group representations in probability and statistics}, volume~11
  of {\em Institute of Mathematical Statistics Lecture Notes---Monograph
  Series}.
\newblock Institute of Mathematical Statistics, Hayward, CA, 1988.

\bibitem[DK16]{DK16}
Pat Devlin and Jeff Kahn.
\newblock On ``stability'' in the {E}rd{\H{o}}s-{K}o-{R}ado theorem.
\newblock {\em SIAM J. Discrete Math.}, 30(2):1283--1289, 2016.

\bibitem[DKK{\etalchar{+}}18a]{DKKMS18b}
Irit Dinur, Subhash Khot, Guy Kindler, Dor Minzer, and Muli Safra.
\newblock On non-optimally expanding sets in {G}rassmann graphs.
\newblock In {\em Proceedings of the 50th Annual ACM Symposium on Theory of
  Computing}, pages 940--951, 2018.

\bibitem[DKK{\etalchar{+}}18b]{DKKMS18a}
Irit Dinur, Subhash Khot, Guy Kindler, Dor Minzer, and Muli Safra.
\newblock Towards a proof of the 2-to-1 {G}ames {C}onjecture?
\newblock In {\em Proceedings of the 50th Annual ACM Symposium on Theory of
  Computing}, pages 376--389, 2018.

\bibitem[DS81]{DS81}
Persi Diaconis and Mehrdad Shahshahani.
\newblock Generating a random permutation with random transpositions.
\newblock {\em Z. Wahrsch. Verw. Gebiete}, 57(2):159--179, 1981.

\bibitem[DS87]{DS87}
Persi Diaconis and Mehrdad Shahshahani.
\newblock Time to reach stationarity in the {B}ernoulli--{L}aplace diffusion
  model.
\newblock {\em SIAM Journal on Mathematical Analysis}, 18(1):208--218, 1987.

\bibitem[DS96]{DS96}
Persi Diaconis and Laurent Saloff{-}Coste.
\newblock Logarithmic {S}obolev inequalities for finite {M}arkov chains.
\newblock {\em Annals of Applied Probability}, 6(3):695--750, 1996.

\bibitem[EE07]{EE07}
Paul Ehrenfest and Tatiana Ehrenfest.
\newblock \"{U}ber zwei bekannte {E}inw\"ande gegen das {B}oltzmannsche
  \emph{H}-{T}heorem.
\newblock {\em Physikalische Zeitschrift}, 8(9):311--314, 1907.

\bibitem[EFP11]{EFP11}
David Ellis, Ehud Friedgut, and Haran Pilpel.
\newblock Intersecting families of permutations.
\newblock {\em J. Amer. Math. Soc.}, 24(3):649--682, 2011.

\bibitem[EKL17]{EKL17}
David Ellis, Nathan Keller, and Noam Lifshitz.
\newblock On a biased edge isoperimetric inequality for the discrete cube.
\newblock Technical Report 1702.01675, arXiv, 2017.

\bibitem[EKL18]{EKL18}
David Ellis, Nathan Keller, and Noam Lifshitz.
\newblock On the structure of subsets of the discrete cube with small edge
  boundary.
\newblock {\em Discrete Analysis}, 9:1--29, 2018.

\bibitem[FI18a]{FI18b}
Yuval Filmus and Ferdinand Ihringer.
\newblock Boolean constant degree functions on the slice are juntas.
\newblock Technical Report 1801.06338, arXiv, 2018.

\bibitem[FI18b]{FI18a}
Yuval Filmus and Ferdinand Ihringer.
\newblock Boolean degree 1 functions on some classical association schemes.
\newblock Technical Report 1801.06034, arXiv, 2018.

\bibitem[Fil16a]{Fil16b}
Yuval Filmus.
\newblock Friedgut--{K}alai--{N}aor theorem for slices of the {B}oolean cube.
\newblock {\em Chic. J. Theoret. Comput. Sci.}, pages Art. 14, 17, 2016.

\bibitem[Fil16b]{Fil16a}
Yuval Filmus.
\newblock An orthogonal basis for functions over a slice of the {B}oolean
  hypercube.
\newblock {\em Electron. J. Combin.}, 23(1):Paper 1.23, 27, 2016.

\bibitem[FKMW18]{FKMW18}
Yuval Filmus, Guy Kindler, Elchanan Mossel, and Karl Wimmer.
\newblock Invariance principle on the slice.
\newblock {\em Transactions on Computation Theory}, 10(3):11, 2018.

\bibitem[FM16]{FM16}
Yuval Filmus and Elchanan Mossel.
\newblock Harmonicity and invariance on slices of the {B}oolean cube.
\newblock In {\em Proceedings of the 31st Annual Computational Complexity
  Conference}, pages Art. No. 16, 13, 2016.

\bibitem[Fri98]{Fri98}
Ehud Friedgut.
\newblock Boolean functions with low average sensitivity depend on few
  coordinates.
\newblock {\em Combinatorica}, 18(1):27--36, 1998.

\bibitem[Fro00]{Fro00}
Ferdinand~Georg Frobenius.
\newblock {\"U}ber die {C}haraktere der symmetrischen {G}ruppe.
\newblock {\em Sitzungsberichte der {K}{\"o}niglich preussischen {A}kademie der
  {W}issenschaften zu {B}erlin}, pages 516--534, 1900.

\bibitem[Gov64]{Gov64}
Zakkula Govindarajulu.
\newblock The first two moments of the reciprocal of the positive
  hypergeometric variable.
\newblock {\em Sankhy\=a Ser. B}, 26:217--236, 1964.

\bibitem[Gro75]{Gro75}
Leonard Gross.
\newblock Logarithmic {S}obolev inequalities.
\newblock {\em American Journal of Mathematics}, 97(4):1061--1083, 1975.

\bibitem[Har64]{Harper64}
Lawrence~H. Harper.
\newblock Optimal assignments of numbers to vertices.
\newblock {\em J. Soc. Indust. Appl. Math.}, 12:131--135, 1964.

\bibitem[Har76]{Hart76}
Sergiu Hart.
\newblock A note on the edges of the {$n$}-cube.
\newblock {\em Discrete Math.}, 14(2):157--163, 1976.

\bibitem[Kat68]{Kat68}
Gyula Katona.
\newblock A theorem of finite sets.
\newblock In {\em Theory of graphs ({P}roc. {C}olloq., {T}ihany, 1966)}, pages
  187--207. Academic Press, New York, 1968.

\bibitem[KKL88]{KKL88}
Jeff Kahn, Gil Kalai, and Nathan Linial.
\newblock The influence of variables on {B}oolean functions.
\newblock In {\em Proceedings of the 29th Annual IEEE Symposium on Foundations
  of Computer Science}, pages 68--80, 1988.

\bibitem[KMMS18]{KMMS18}
Subhash Khot, Dor Minzer, Dana Moshkovitz, and Muli Safra.
\newblock Small set expansion in the {J}ohnson graph.
\newblock Technical Report TR18-078, Electronic Colloquium on Computational
  Complexity, 2018.

\bibitem[KMS17]{KMS17}
Subhash Khot, Dor Minzer, and Muli Safra.
\newblock On independent sets, 2-to-2 games, and {G}rassmann graphs.
\newblock In {\em Proceedings of the 49th Annual ACM Symposium on Theory of
  Computing}, pages 576--589, 2017.

\bibitem[KMS18]{KMS18}
Subhash Khot, Dor Minzer, and Muli Safra.
\newblock Pseudorandom sets in {G}rassmann graph have near-perfect expansion.
\newblock Technical Report TR18-006, Electronic Colloquium on Computational
  Complexity, 2018.

\bibitem[Kru63]{Kru63}
Joseph~B. Kruskal.
\newblock The number of simplices in a complex.
\newblock In {\em Mathematical optimization techniques}, pages 251--278. Univ.
  of California Press, Berkeley, Calif., 1963.

\bibitem[Led99]{Led99}
Michel Ledoux.
\newblock Concentration of measure and logarithmic {S}obolev inequalities.
\newblock In {\em {S{\'e}minaire} de {Probabilit{\'e}s} {XXXIII}}, pages
  120--216. Springer, 1999.

\bibitem[Lin64]{Lindsey64}
John~H. Lindsey{ }II.
\newblock Assignment of numbers to vertices.
\newblock {\em Amer. Math. Monthly}, 71:508--516, 1964.

\bibitem[LK99]{LK99}
L{\'a}szlo Lov{\'a}sz and Ravi Kannan.
\newblock Faster mixing via average conductance.
\newblock In {\em Proceedings of the 31st Annual ACM Symposium on Theory of
  Computing}, pages 282--287, 1999.

\bibitem[LY98]{LY98}
Tzong-Yau Lee and Horng-Tzer Yau.
\newblock Logarithmic {S}obolev inequality for some models of random walks.
\newblock {\em Annals of Probability}, 26(4):1855--1873, 1998.

\bibitem[Mor58]{Mor58}
Patrick Moran.
\newblock Random processes in genetics.
\newblock {\em Mathematical Proceedings of the Cambridge Philosophical
  Society}, 54(1):60--71, 1958.

\bibitem[Mos14]{Mos14}
Dana Moshkovitz.
\newblock Direct product testing with nearly identical sets.
\newblock Technical Report TR14-182, Electronic Colloquium on Computational
  Complexity, 2014.

\bibitem[Nel73]{Nel73}
Edward Nelson.
\newblock The free {M}arkoff field.
\newblock {\em Journal of Functional Analysis}, 12:211--227, 1973.

\bibitem[NS94]{NS94}
Noam Nisan and Mario Szegedy.
\newblock On the degree of {B}oolean functions as real polynomials.
\newblock {\em Computational Complexity}, 4(4):301--313, 1994.

\bibitem[O'D14]{OD14}
Ryan O'Donnell.
\newblock {\em Analysis of Boolean Functions}.
\newblock Cambridge University Press, 2014.

\bibitem[OW13a]{OW13a}
Ryan O'Donnell and Karl Wimmer.
\newblock K{KL}, {K}ruskal-{K}atona, and monotone nets.
\newblock {\em SIAM Journal on Computing}, 42(6):2375--2399, 2013.

\bibitem[OW13b]{OW13}
Ryan O'Donnell and Karl Wimmer.
\newblock Sharpness of {KKL} on {S}chreier graphs.
\newblock {\em Electronic Communications in Probability}, 18:1--12, 2013.

\bibitem[PI76]{PI76}
Jean Piaget and Barbel Inhelder.
\newblock {\em The origin of the idea of chance in children}.
\newblock The Norton Library, 1976.

\bibitem[Pol12]{Pol12}
D.~H.~J. Polymath.
\newblock A new proof of the density {H}ales--{J}ewett theorem.
\newblock {\em Annals of Mathematics}, 175(3):1283--1327, 2012.

\bibitem[RS10]{RS10}
Prasad Raghavendra and David Steurer.
\newblock Graph expansion and the {U}nique {G}ames {C}onjecture.
\newblock In {\em Proceedings of the 42nd Annual ACM Symposium on Theory of
  Computing}, pages 755--764, 2010.

\bibitem[Sca97]{Sca97}
Fabio Scarabotti.
\newblock Time to reach stationarity in the {B}ernoulli--{L}aplace diffusion
  model with many urns.
\newblock {\em Adv. in Appl. Math.}, 18(3):351--371, 1997.

\bibitem[Sch59]{Sch59}
Marcel-Paul Sch{\"u}tzenberger.
\newblock A characteristic property of certain polynomials of {E}.\ {F}.\ moore
  and {C}.\ {E}.\ shannon.
\newblock {\em Quarterly Progress Report, Research Laboratory of Electronics
  (RLE)}, 055.IX:117--118, 1959.

\bibitem[ST10]{ST10}
Fabio Scarabotti and Filippo Tolli.
\newblock Harmonic analysis on a finite homogeneous space {II}: the
  {G}elfand--{T}setlin decomposition.
\newblock {\em Forum Math.}, 22(5):879--911, 2010.

\bibitem[Tal94]{Tal94}
Michel Talagrand.
\newblock On {R}usso's approximate zero-one law.
\newblock {\em Annals of Probability}, 22(3):1576--1587, 1994.

\bibitem[Wim09]{Wim09}
Karl Wimmer.
\newblock {\em Fourier methods and combinatorics in learning theory}.
\newblock PhD thesis, Carnegie Mellon University, 2009.

\bibitem[Wim14]{Wim14}
Karl Wimmer.
\newblock Low influence functions over slices of the {B}oolean hypercube depend
  on few coordinates.
\newblock In {\em Proceedings of the 29th Annual Computational Complexity
  Conference}, pages 120--131, 2014.

\end{thebibliography}

\end{document}